\date{}
\theoremstyle{plain}
\newtheorem{theorem}{Theorem}[section]
\newtheorem{proposition}[theorem]{Proposition}
\newtheorem{lemma}[theorem]{Lemma}
\newtheorem{corollary}[theorem]{Corollary}
\theoremstyle{definition}
\numberwithin{equation}{section} \theoremstyle{plain}
\author
{Shilei Fan}
\address
{Shilei FAN: School of Mathematics and Statistics, and Key Lab NAA--MOE, Central China Normal University, Wuhan 430079, China}
\email{slfan@ccnu.edu.cn}
\title{ Tiling the field $\Q_p$ of $p$-adic numbers by a function }
\date{\today}
\def\R{{\textbf{R}}}
\def\R{\mathbb R}
\def\1{\mathbbm 1}
\def\Z{\mathbb Z}
\def\Q{\mathbb Q}
\def\m{\mathfrak{m}}
\thanks{ S. L. FAN was partially supported by NSFC (grants No. 12331004  and No.  12231013). }
\begin{document}
	\maketitle
	\begin{abstract}

This study explores  the  properties  of the function which can tile  the field $\Q_p$ of $p$-adic numbers by translation. It is  established that functions capable of tiling $\Q_p$ is by translation uniformly locally constancy.
As an application, in the field $\Q_p$, we addressed the question posed by H. Leptin and D. M\"uller, providing the necessary and sufficient conditions for a discrete set to correspond to a uniform partition of unity. The study also connects these tiling properties to the Fuglede conjecture, which states that a measurable set is a tile if and only if it is spectral. The paper concludes by characterizing the structure of tiles in \(\mathbb{Q}_p \times \mathbb{Z}/2\mathbb{Z}\), proving that they are spectral sets.

	\emph{Keywords:} periodic,  translation tile,  spectral set, $p$-adic field.
	\end{abstract}
	\section{Introduction}
Consider a locally compact abelian group $G$ equipped with a Haar measure $\mu$. Let $f$ be a function in $L^1(G)$.
The function $f$ is said to {\em tile} $G$ at level $w$ by a translation set $T$ if
\[ \sum_{t \in T} f(x - t) = w \quad \text{a.e.} \ x\in G. \]
Here, the convergence is absolute,   the translation set $T$ is required to be locally finite, i.e. for any compact set $K$, the set $T\cap K$ is finite and we allow elements of $T$ to occur with finite multiplicities. We say that $(f, T)$ is a tiling pair at level $w$. If $f = 1_\Omega$ is an indicator function of the measurable set $\Omega$ and $w = 1$, we say that $\Omega$ {\em tiles} the group $G$ by the translation set $T$, and $(\Omega, T)$ is a {\em tiling pair} (at level $1$). The set $\Omega$ is called a {\em tile } of $G$

In the case where \( G = \mathbb{R} \), it was proven in \cite{LW96Invent} that if a bounded measurable set \(\Omega\) with measure-zero boundary tiles the line \(\mathbb{R}\) by a translation set \(T\), then \( T \) is a periodic set, that is, \( T + \tau = T \) for some \( \tau > 0 \). This result was extended in \cite{KL96Duke} (and proved earlier in \cite{LM91}) to tiling \(\mathbb{R}\) by a function \( f \in L^1(\mathbb{R}) \) with compact support. It was shown that if \( (f, T) \) is a tiling pair at some level \( w \) with \( T \) of bounded density and \( f \) is not identically zero, then \( T \) must be a finite union of periodic sets. For more related results and questions, interested readers can refer to \cite{KL21}.

For the case \( G = \mathbb{Q}_p \), the field of \( p \)-adic numbers, it was proven in \cite{FFLS} that if \((\Omega, T)\) forms a tiling pair, then \( \Omega \) is a compact open set up to a set of measure zero, which implies
\[
1_\Omega(x) = 1_{\Omega}(x + \tau), \quad \text{a.e.} \ x \in \mathbb{Q}_p
\]
for some \( \tau \in \mathbb{Q}_p \setminus \{0\} \).
This means that \( 1_\Omega(x) \) is a periodic function or the measurable set \( \Omega \) is periodic.


Let \( \mathbb{Z}_p \) be the ring of \( p \)-adic integers. A compact open set \( \Omega \) can be expressed as
\[
\Omega = \bigsqcup_{c \in C} (c + p^\gamma \mathbb{Z}_p)
\]
for some finite set \( C \subset \mathbb{Q}_p \) and some integer \( \gamma \in \mathbb{Z} \). Note that \( p^\gamma \mathbb{Z}_p \) is a subgroup of \( \mathbb{Q}_p \) under addition. Hence, a compact open set is a finite union of cosets of some subgroup. Moreover, the structure of the tile \( \Omega \) and the translation set \( T \) are characterized by \( p \)-homogeneous trees in \cite{FFS}.

Let $f \in L^{1}(\mathbb{Q}_p)$ be a function. We say that $f$ is \emph{periodic} if
\[f(x + \tau) = f(x) \quad \text{a.e.  } x \in \mathbb{Q}_p,\]
 for some $\tau \in \mathbb{Q}_p \setminus \{0\}$. In this note, we show that any function capable of tiling $\mathbb{Q}_p$ is inherently periodic.
 
 \begin{theorem}\label{thm-tilingperiodic}
 Let $f \in L^{1}(\mathbb{Q}_p)$ and $V \subset \mathbb{Z} \setminus \{0\}$ be a finite set of non-zero integers. Suppose that $T$ is a locally finite  subset in $\mathbb{Q}_p$, and $v_t \in V$ for $t \in T$ are such that
\begin{align}
\sum_{t \in T} v_t \cdot f(x-t) = w, \quad \text{a.e. } x \in \mathbb{Q}_p,
\end{align}
for some $w \in \mathbb{R}$.Then $f$ is  uniformly locally constancy, i.e. there exists $n\in \mathbb{Z}$ such that
\begin{align}\label{eq-periodic}
\forall u\in B(x,p^n), \quad f(x+u)=f(x) \quad  \text{a.e. } x\in \Q_p.
\end{align}
 \end{theorem}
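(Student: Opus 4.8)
\section*{Proof proposal}

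The plan is to recast ``uniformly locally constant'' as a statement about the Fourier support of $f$, and then combine an averaging trick with the rigidity of the transform of $\mu_T:=\sum_{t\in T}v_t\delta_t$. Recall that the subgroups $p^n\Z_p$ $(n\in\Z)$ form a neighbourhood basis of $0$ by compact open subgroups, and that for $g\in L^1(\Q_p)$ one has $g=g*\psi_n$, where $\psi_n:=p^n\1_{p^n\Z_p}$ is the normalised indicator, if and only if $g$ is constant on the cosets of $p^n\Z_p$, which on the Fourier side says $\widehat g$ is supported in $p^{-n}\Z_p=(p^n\Z_p)^{\perp}$. Hence \eqref{eq-periodic} is equivalent to: $\widehat f$ has compact support, equivalently $f=f*\psi_N$ for some $N\in\Z$. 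I would first record this reformulation.

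\emph{Step 1 (averaging).} Put $f_n:=f*\psi_n$. Convolving the tiling identity with $\psi_n$, using that the convergence is absolute (so Fubini applies) together with $\int\psi_n=1$, gives $\sum_{t\in T}v_t f_n(x-t)=w$ for a.e.\ $x$; thus $(f_n,T,(v_t))$ is again a tiling pair at level $w$. Moreover $f_n\to f$ in $L^1(\Q_p)$ by continuity of translation. Since $\widehat{f_n}=\widehat f\cdot\1_{p^{-n}\Z_p}$, proving that $(f_n)$ stabilises (i.e.\ $f_n=f_{n_0}$ for all $n\ge n_0$) is equivalent to the theorem. Equivalently, writing $h_n:=f-f_n$, it suffices to show $h_n=0$ for some $n$; note $h_n\in L^1(\Q_p)$, $\sum_{t\in T}v_t h_n(x-t)=0$ a.e., $\|h_n\|_1\to 0$, and $\widehat{h_n}$ is supported in $\{\,|\xi|_p\ge p^{n+1}\,\}$. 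If $f$ were \emph{not} uniformly locally constant, then $h_n\ne 0$ for every $n$, producing nonzero $L^1$ solutions of the homogeneous equation $\mu_T*h=0$ of arbitrarily high frequency.

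\emph{Step 2 (rigidity of $\widehat{\mu_T}$; the crux).} Because $T$ is locally finite, $\mu_T$ is automatically a tempered distribution on $\Q_p$: for a test function only finitely many translates contribute. Transforming $\mu_T*h=0$ yields $\widehat{\mu_T}\cdot\widehat h=0$, while the tiling identity transforms to $\widehat{\mu_T}\cdot\widehat f=w\,\delta_0$; since $\widehat f$ is continuous, the latter forces $\widehat{\mu_T}$ to agree with $w\delta_0$ on a neighbourhood of every point where $\widehat f\ne 0$, so $\widehat{\mu_T}$ is pinned down off a compact set precisely when $\widehat f$ is compactly supported. For any small ball $B=\xi_0+H^{\perp}$ on which $\widehat{\mu_T}=0$ (here $H$ a compact open subgroup), testing against $\1_B$ and computing $\widehat{\1_B}$ explicitly (it is a multiple of $\chi(\xi_0\,\cdot\,)\1_{H}$) gives
\[
\sum_{t\in T\cap H}v_t\,\chi(\xi_0 t)=0,
\]
a \emph{finite} exponential sum, since $T\cap H$ is finite. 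Grouping the $t$'s by residue modulo a suitable subgroup turns the left side into a trigonometric polynomial on a finite abelian group whose coefficients are integers bounded by $\max_{v\in V}|v|\cdot\#(T\cap H)$. The main obstacle is to show that, if $\widehat h$ failed to be compactly supported, the vanishing of such integer exponential sums across all the arising high-frequency cosets would be incompatible with the level-$w$ equation already controlling $\widehat{\mu_T}$; here the integrality and uniform boundedness of the weights $v_t$ play the role that bounded density plays in the Euclidean arguments of \cite{LW96Invent,KL96Duke}, preventing these sums from vanishing ``for free'' on arbitrarily large frequency sets. Carrying this through bounds the frequency support of every $h_n$ by a fixed compact set, whence $h_n=0$ for $n$ large and $f=f_n$ is uniformly locally constant.

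One could alternatively run Step 2 through the description of tiles and translation sets of $\Q_p$ by $p$-homogeneous trees from \cite{FFLS,FFS}, applied to the (locally constant) functions $f_n$ and their level sets, but the Fourier route above looks the most economical; in either approach the essential difficulty is the same high-frequency rigidity of $\widehat{\mu_T}$ under the integer-weight hypothesis.
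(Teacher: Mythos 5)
Your overall strategy is the same as the paper's: reduce the statement, via Proposition \ref{Prop-compactConstant}, to showing that $\widehat f$ has compact support, and obtain that from the fact that $\widehat{\mu_T}$ (as a distribution) cannot vanish on high-frequency balls, since $\widehat{\mu_T}\cdot\widehat f=w\delta_0$ pins $\operatorname{supp}(\widehat f)\setminus\{0\}$ inside $\mathcal Z_{\widehat{\mu_T}}$. Your Step 1 (the mollification $f_n=f*\psi_n$ and the stabilisation of $h_n=f-f_n$) is a harmless but unnecessary detour: once the boundedness of $\mathcal Z_{\widehat{\mu_T}}$ is known, the compactness of $\operatorname{supp}(\widehat f)$ follows in one line and no approximation argument is needed.

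The genuine gap is that you never prove the statement you correctly identify as ``the crux'': that $\widehat{\mu_T}$ cannot vanish on balls of arbitrarily high frequency. Saying that integrality and boundedness of the weights ``prevent these sums from vanishing for free'' is exactly the assertion that requires proof, and your proposed route --- playing the vanishing against ``the level-$w$ equation already controlling $\widehat{\mu_T}$'' --- points in the wrong direction: the paper's argument does not use the inhomogeneous equation at this stage at all, only the local finiteness of $T$ and the integrality of the $v_t$. The mechanism is Schoenberg's theorem (Lemma \ref{SchLemma}): a vanishing integer-coefficient sum of $p^N$-th roots of unity decomposes into $p$-cycles, so if the finite sum $\sum_{t\in T\cap B(0,p^n)}v_t\chi(\xi t)$ vanishes, then \emph{every} $t_0$ in the support acquires a companion $t'$ with $|t_0-t'|_p\le p/|\xi|_p$ (Lemma \ref{lem-necessary}). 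Local finiteness of $T$ supplies a point $t_0$ that is isolated below some scale (the quantity $n_\nu$ in \eqref{def-nv}, with a separate easy argument when $n_\nu=-\infty$), which contradicts the existence of such a companion once $|\xi|_p$ is large; hence $\mathcal Z_{\widehat{\mu_T}}\subset B(0,p^{n_\nu+1})$ (Proposition \ref{zeroofE}). Without this cyclotomic input --- or some substitute for it --- your argument does not close: note that for real or irrational weights the exponential sums \emph{can} vanish at high frequency, so the integrality must enter through a concrete structural result, not merely as a heuristic.
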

Remark that Equality \eqref{eq-periodic} implies that $f$ is periodic.
  Actually, the definition regarding tiling doesn't necessarily require the group to be commutative (abelian). It is proved in \cite{HN79} that in every connected nilpotent group $G$ there exists a discrete subset $T$ and a corresponding non-negative smooth function $\phi$ with compact support in $G$ such that
\[\sum_{t\in T} \phi(t x)=1 \quad \text{for all } x\in G,\]
i.e., the family $\{\phi(t x): t\in T\}$ forms a partition of unity in $G$, which is called a \emph{uniform partition of unity} in $G$. H. Leptin and D. M\"uller \cite{LM91} proposed two questions:

1. Which locally compact groups do admit uniform partitions of unity?

2. Can one describe the set $T$ which corresponds to a uniform partition of unity?\\
And a positive answer for the first question was obtained by the authors. However, the authors claimed that a comprehensive answer to the second question seems to be beyond the present possibilities. They restricted their considerations to the simplest non-trivial case, namely to the group of reals: $ G = \mathbb{R}$ and provided a complete solution.

In the case of $G=\mathbb{Q}_p$, the field of $p$-adic numbers, we have also presented a complete solution. For $x\in \Q_p$ and $r>0$, denote by \[B(x, r)=\{y\in \Q_p: |x-y|_p\leq r\}\] the closed  ball of radius $r$ centered at $x$.

\begin{theorem} \label{thm:card}
A discrete set $T \subset \mathbb{Q}_p$ corresponds to a uniform partition of unity if and only if there exists a sufficiently large integer $n$ such that \[ \#(B(x,p^n)\cap T) =\#(B(y,p^n)\cap T) \] for all $x, y \in \mathbb{Q}_p$.

\end{theorem}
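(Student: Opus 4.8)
The plan is to prove both implications directly, letting the compact--open structure of $\mathbb{Q}_p$ do the work; the additive group being abelian, it is immaterial whether the partition of unity is written with $x-t$ or $x+t$. I read a uniform partition of unity for $\mathbb{Q}_p$ as the existence of a non-negative, locally constant, compactly supported $\phi$ with $\sum_{t\in T}\phi(x-t)=1$ for every $x$; equivalently $(\phi,T)$ is a tiling pair at level $1$ with all multiplicities equal to $1$. (If one only assumes $\phi\in L^1(\mathbb{Q}_p)$ non-negative with compact support, one first applies Theorem \ref{thm-tilingperiodic} with $V=\{1\}$ to see that $\phi$ is uniformly locally constant, and the rest of the argument is unchanged.)

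For the ``if'' direction, I would start from an integer $n$ with $\#(B(x,p^n)\cap T)=m$ independent of $x$; here $m<\infty$ because $T$ is locally finite and $B(x,p^n)$ compact, and we may assume $m\ge1$, the empty set admitting no partition of unity. The test function is then simply $\phi:=\frac1m\,1_{B(0,p^n)}$, which is non-negative, locally constant and compactly supported because $B(0,p^n)=p^{-n}\mathbb{Z}_p$ is a compact open subgroup of $\mathbb{Q}_p$; for every $x$,
\[
\sum_{t\in T}\phi(x-t)=\frac1m\,\#\{t\in T:\ |x-t|_p\le p^n\}=\frac1m\,\#(B(x,p^n)\cap T)=1,
\]
so $T$ corresponds to a uniform partition of unity.

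For the ``only if'' direction, take $\phi\ge0$ locally constant and compactly supported with $\sum_{t\in T}\phi(x-t)=1$, fix $M$ with $\mathrm{supp}\,\phi\subset B(0,p^M)$, and integrate the tiling identity over an arbitrary ball $B(x_0,p^k)$ with $k\ge M$. The key observation is that for $x\in B(x_0,p^k)$ only the finitely many $t\in T\cap B(x_0,p^k)$ make $\phi(x-t)$ nonzero, so the interchange of sum and integral is trivial; and after the substitution $y=x-t$, the nested-or-disjoint dichotomy for ultrametric balls shows that $\int_{B(x_0,p^k)}\phi(x-t)\,dx=\int_{B(x_0-t,p^k)}\phi$ equals $\|\phi\|_1$ if $t\in B(x_0,p^k)$ and vanishes otherwise. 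This produces
\[
p^{k}=\mu(B(x_0,p^k))=\|\phi\|_1\cdot\#(T\cap B(x_0,p^k)),
\]
and since $0<\|\phi\|_1<\infty$, the cardinality $\#(B(x_0,p^k)\cap T)=p^{k}/\|\phi\|_1$ does not depend on $x_0$; any $n\ge M$ then satisfies the stated condition.

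I do not anticipate a genuine obstacle. The only delicate bookkeeping is deciding exactly when a translate of $B(x_0,p^k)$ contains $\mathrm{supp}\,\phi$ and when it is disjoint from it, which the ultrametric inequality settles cleanly, together with the remark that $\|\phi\|_1$ is finite and positive. Conceptually, the contrast with the real line treated by Leptin--Müller is that in $\mathbb{Q}_p$ the counting function $x\mapsto\#(B(x,p^n)\cap T)$ is computed by a single integration of the tiling identity, which simultaneously manufactures the test function in one direction and extracts the cardinality invariance in the other.
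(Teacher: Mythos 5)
Your proof is correct, and while the ``if'' direction coincides with the paper's (both take $\phi=\tfrac1m 1_{B(0,p^n)}$ and observe $\phi*\sum_{t\in T}\delta_t=1$), your ``only if'' direction takes a genuinely different and more elementary route. The paper deduces the constancy of $x\mapsto\#(B(x,p^n)\cap T)$ from Proposition \ref{Structure}, whose proof runs through the Fourier transform of the measure $\nu=\sum_{t\in T}\delta_t$: one shows $B(0,p^{-n_f})\setminus\{0\}\subset\mathcal{Z}_{\widehat{\nu}}$ via Corollary \ref{cor:3.3} and then extracts the equidistribution of $\nu$ over balls by repeated application of Schoenberg's lemma on vanishing sums of $p^k$-th roots of unity. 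You instead integrate the tiling identity over a large ball $B(x_0,p^k)$ and use the ultrametric dichotomy to evaluate $\int_{B(x_0-t,p^k)}\phi$ as $\|\phi\|_1$ or $0$ according to whether $t\in B(x_0,p^k)$; this yields $\#(T\cap B(x_0,p^k))=p^k/\|\phi\|_1$ directly, with no Fourier analysis. The trade-off is that your argument leans on the compact support of $\phi$ (legitimate here, since a uniform partition of unity is by definition given by a smooth, i.e.\ locally constant, compactly supported function), whereas the paper's Proposition \ref{Structure} applies to an arbitrary non-negative $f\in L^1(\mathbb{Q}_p)$ with $\int f>0$ and no support hypothesis, where your exact evaluation would have to be replaced by the error-term analysis of the kind appearing in Proposition \ref{fisrtprop}. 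For the theorem as stated, your argument is complete and arguably cleaner; the one point worth making explicit is that $\|\phi\|_1>0$ (immediate from $\sum_t\phi(x-t)=1$) and that the constant count is finite because $T$ is locally finite.
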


 For other motivation, it is illuminating to observe the inherent connection between tiling by a set and tiling by a function, particularly within the framework of the Fuglede conjecture \cite{MR0470754}. The conjecture stats that a measurable set $\Omega \in \R^d$ of unit measure  can tile $\R^d$ by translation if and only if there exists a set $\Lambda \in \R^d$ such that $\{e^{2\pi i \lambda x}, \lambda \in \Lambda\}$ forms an orthonormal basis of $L^2(\Omega)$. Here, the set $\Lambda$ is called a spectrum of $\Omega$ and $(\Omega,\Lambda)$ is called a spectral pair. Denote by $f_\Omega$ the so-called power spectrum of $1_\Omega$
 \[f_{\Omega}(\xi)=|\widehat{1_{\Omega}}(\xi)|^2.\]
 Then $(\Omega,\Lambda)$ is a spectral pair if and only if $(f_{\Omega},\Lambda)$ form a tiling pair at level $1$.
Hence, the Fuglede conjecture takes the following symmetric form
\begin{align}\label{equ-fug}
 \Omega\text{ tiles }\R^d \Longleftrightarrow |\widehat{1_{\Omega}}|^2 \text{ tiles } \R^d.
\end{align}

 There are many positive results under different extra assumptions before the work\cite{MR2067470} where Tao gave a counterexample: there exists a spectral subset of ${\R}^{n}$ with $n\ge 5$ which is not a tile. After that, Matolcsi\cite{MR2159781}, Matolcsi and Kolountzakis\cite{MR2264214,MR2237932}, Farkas and Gy \cite{MR2221543}, Farkas, Matolcsi and Mora\cite{MR2267631} gave a series of counterexamples which show that both directions of Fuglede's conjecture fail in ${\R}^{n}\left ( n\ge 3 \right ) $. However, the conjecture is still open in low dimensions $n=1,2$.

 Generally, for a locally compact abelian group $G$, denote by  $\widehat{G} $ its dual group.  Consider a Borel measurable subset $\Omega $ in $G$  of unit measure. We say that $\Omega $ is a \textit{spectral set} if there exists a set $\Lambda \subset \widehat{ G} $ which forms an orthonormal basis of the Hilbert space $L^{2} \left ( \Omega  \right ) $. Such a set $\Lambda $ is called a \textit{spectrum} of $\Omega $ and $\left ( \Omega ,\Lambda  \right ) $ is called a \textit{spectral pair}. The Fuglede conjecture can be generalized to locally compact abelian groups: {\em a Borel measurable set $\Omega \subset G$ of unit measure is a spectral set if and only if it can tile $G$ by translation.} Similarly, the generalized  Fuglede conjecture also takes the following symmetric form
\begin{align}\label{equ-fugloc}
 \Omega\text{ tiles }G \Longleftrightarrow |\widehat{1_{\Omega}}|^2 \text{ tiles } G.
\end{align}

In its generality, this generalized conjecture does not hold true. Instead, we should inquire about the groups for which it holds. This question even arises for finite groups. The counterexamples in $\mathbb{R}^d$, $d\geq 3$, are actually constructed based on counterexamples in finite groups. Substantial work has been done for some finite groups \cite{MR3684890, MR4402630, FFS, MR3649367, MR4085122, MR4493731, MR1772427, MR2237932, MR1895739, MR4422438, MR3695475, MR4186120, MR4604197}.
For infinite groups, based on \eqref{equ-fugloc}, Fan et al. \cite{FFLS} proved that Fuglede's conjecture holds in the field $\mathbb{Q}_p$ of $p$-adic numbers. In fact, this is the first example of an infinite abelian group where Fuglede's conjecture holds. It is natural to find other infinite locally compact abelian groups where Fuglede's conjecture holds.

As an application of Theorem \ref{thm-tilingperiodic},  we can characterize the structure of the tiles in  the infinite group $\Q_p\times \Z/2\Z$.

%
%

\begin{theorem}\label{thm1.3}
Assume that  $\Omega= \Omega_0\times\{0\} \cup \Omega_1\times\{1\} $ tiles $\Q_p\times \Z/2\Z$ by translation. 
\begin{enumerate}[{\rm (1)}]
\item If $p>2$, then either $\mu(\Omega_0\cap\Omega_1)=0$ and  $ \Omega_0\cup \Omega_1$ can tile $\Q_p$ by translation   or  $\Omega_0$ and $\Omega_1$  can tiles $\Q_p$ with  a common translation set  $T_0\subset \Q_p$. 
\item If $p=2$, then  we have three cases: 
\begin{itemize}
\item[(i)] $\mu(\Omega_0\cap \Omega_1)=0$ and $\Omega_0\cup \Omega_1$ tiles $\Q_2$ by translation,

\item[(ii)]$\Omega_0$ and $\Omega_1$  can tiles $\Q_2$ with  a common translation set  $T_0\subset \Q_2$
 
\item[(iii)] both $\Omega_0$ and $\Omega_1$ are compact open (up to a measure zero set)   and  there exist $j_0\in \{0,\cdots, n-1\}$  such that
$\Omega_0$ and  \[\widetilde{\Omega}_1 = \{ x + 2^{n-j_0-1}: x  \in \Omega_1\}\] are disjoint (up to a measure zero set) and  $\Omega_0\cup \widetilde{\Omega}_1$ tiles  $\Q_2$ by translation.
\end{itemize}
\end{enumerate}

\end{theorem}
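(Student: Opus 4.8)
The plan is to transport the tiling on $\Q_p\times\Z/2\Z$ to a statement about a pair of functions on $\Q_p$ and then apply Theorem~\ref{thm-tilingperiodic}. Write a point of $\Q_p\times\Z/2\Z$ as $(x,\epsilon)$, and a translate of $\Omega$ by $(s,\delta)$; summing over a translation set $T\subset\Q_p\times\Z/2\Z$ gives two scalar equations (one on the fibre $\{0\}$, one on the fibre $\{1\}$). Splitting $T=T_0\cup T_1$ according to the $\Z/2\Z$-coordinate, the tiling equation becomes, for a.e.\ $x\in\Q_p$,
\begin{align}
\sum_{t\in T_0} 1_{\Omega_0}(x-t)+\sum_{t\in T_1} 1_{\Omega_1}(x-t)&=1,\\
\sum_{t\in T_0} 1_{\Omega_1}(x-t)+\sum_{t\in T_1} 1_{\Omega_0}(x-t)&=1.
\end{align}
Adding these, the function $f=1_{\Omega_0}+1_{\Omega_1}$ (which takes values in $\{0,1,2\}$) satisfies $\sum_{t\in T}f(x-t)=2$ with $T=T_0\sqcup T_1$ a locally finite set; by Theorem~\ref{thm-tilingperiodic} (with $V=\{1\}$), $f$ is uniformly locally constant, hence so are $1_{\Omega_0}$ and $1_{\Omega_1}$ individually on the set where $f$ is single-valued; from this I would deduce that $\Omega_0$ and $\Omega_1$ agree with compact open sets up to measure zero whenever $\mu(\Omega_0\cap\Omega_1)>0$, and otherwise that $\Omega_0\cup\Omega_1$ is the relevant single tile. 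This is the structural input that lets us reduce everything to finitely many cosets of a fixed subgroup $p^\gamma\Z_p$.

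Next I would pass to a finite cyclic model. Once $\Omega_0,\Omega_1$ are (up to null sets) unions of cosets of $p^\gamma\Z_p$ and the tiling is by a subset of a suitable lattice $p^{-N}\Z_p/p^\gamma\Z_p$, the problem becomes: a subset of $\big(\Z/p^{k}\Z\big)\times\Z/2\Z$ of the form $A_0\times\{0\}\cup A_1\times\{1\}$ tiles the group by translation; characterize such tilings. Here the two defining equations above, read mod $p^k$, say that the two translation sets $T_0,T_1$ together tile $\Z/p^k\Z$ with $\Omega_0$-copies and $\Omega_1$-copies interleaved in a precise way. The case distinction $p>2$ versus $p=2$ enters because $\gcd(2,|\Z/p^k\Z|)=1$ exactly when $p>2$: in that case the Chinese remainder decomposition $(\Z/p^k\Z)\times\Z/2\Z$ forces the $\Z/2\Z$-coordinate of the translation set to be either trivial or to act by a clean involution, yielding the two alternatives in~(1). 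When $p=2$ the $2$-torsion in $\Z/p^k\Z$ interacts with the $\Z/2\Z$ factor, and the ``twist by $2^{n-j_0-1}$'' in case~(iii) is precisely the extra degree of freedom coming from embedding $\Z/2\Z$ diagonally into a $2$-power cyclic group.

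Then I would carry out the case analysis. Using that $(\Omega,T)$ is a tiling pair, $\mu(\Omega)\cdot\#T$ counts correctly, and the Fourier/mask characterization of tiling complements in $\Z/p^k\Z$ (or directly the ``each level of $\Q_p$ is tiled once'' bookkeeping), I would split on whether some translate $(s,1)\in T$ has $s$ ``compatible'' with shifting $\Omega_1$ into $\Omega_0$. If $\mu(\Omega_0\cap\Omega_1)=0$: the two fibre-equations decouple into a single tiling of $\Q_p$ by $\Omega_0\cup\Omega_1$ with translation set the image of $T$ — this is alternative (1a) / (2i). If $\mu(\Omega_0\cap\Omega_1)>0$: the uniform local constancy forces $\Omega_0$ and $\Omega_1$ to be unions of the same family of cosets enough that $T_0=\emptyset$ (or $T_1=\emptyset$) is impossible and instead $T_0=T_1=:T_0$ serves as a common translation tiling set for each of $\Omega_0,\Omega_1$ separately — alternative (1b) / (2ii). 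For $p=2$ only, there is the residual possibility that neither holds directly but a single shift $\widetilde\Omega_1=\Omega_1+2^{n-j_0-1}$ repairs disjointness and merges the two equations into one tiling of $\Q_2$ — case (2iii); here one checks that the shift by the unique order-$2$ element of the relevant quotient is the only obstruction, which is why exactly one extra case appears.

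The main obstacle I expect is the $p=2$ case, specifically showing that the exceptional alternative (iii) is the \emph{only} extra possibility and pinning down the exponent $n-j_0-1$: this requires a careful analysis of how a tiling complement of $A_0\times\{0\}\cup A_1\times\{1\}$ in $(\Z/2^k\Z)\times\Z/2\Z$ can ``use'' the $\Z/2\Z$ coordinate, and ruling out mixed configurations where part of $T_1$ behaves like a twist and part does not. I would handle this by a divisibility/counting argument on the multiplicities $\#(B(x,p^n)\cap T_0)$ and $\#(B(x,p^n)\cap T_1)$ — which are constant in $x$ by the same uniform-local-constancy mechanism applied to the translation set — forcing one of the three rigid alternatives. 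The $p>2$ case should then follow as the (easier) degenerate instance where the twist exponent cannot exist because $2$ is invertible.
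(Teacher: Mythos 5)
There is a genuine gap at the central step of your reduction. After adding the two fibre equations you correctly obtain $(1_{\Omega_0}+1_{\Omega_1})*(\mu_{T_0}+\mu_{T_1})=2$ and conclude from Theorem~\ref{thm-tilingperiodic} that $f=1_{\Omega_0}+1_{\Omega_1}$ is uniformly locally constant. But your next inference --- that $1_{\Omega_0}$ and $1_{\Omega_1}$ are then individually (almost) uniformly locally constant, hence that $\Omega_0,\Omega_1$ are almost compact open whenever $\mu(\Omega_0\cap\Omega_1)>0$ --- does not follow. Local constancy of the sum only controls the level sets $\{f=0\},\{f=1\},\{f=2\}$, i.e.\ it makes $\Omega_0\cap\Omega_1$ and the symmetric difference $\Omega_0\triangle\Omega_1$ almost compact open; it says nothing about how $\{f=1\}$ is partitioned between $\Omega_0\setminus\Omega_1$ and $\Omega_1\setminus\Omega_0$ (take $\Omega_0$ an arbitrary non-open measurable subset of $\Z_p$ and $\Omega_1=\Z_p\setminus\Omega_0$: the sum is $1_{\Z_p}$, yet neither indicator is locally constant). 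The paper closes this hole by also \emph{subtracting} the two fibre equations, obtaining $(1_{\Omega_0}-1_{\Omega_1})*(\mu_{T_0}-\mu_{T_1})=0$, and then splitting on whether $\mu_{T_0}=\mu_{T_1}$. If they are equal, the sum equation becomes $(1_{\Omega_0}+1_{\Omega_1})*\mu_{T_0}=1$, which forces $\mu(\Omega_0\cap\Omega_1)=0$ and gives the ``union tiles'' alternative. If not, Theorem~\ref{thm-tilingperiodic} applied to the difference equation --- and this is exactly why that theorem is stated with signed integer weights $v_t\in V\subset\Z\setminus\{0\}$ and arbitrary level $w$, here $w=0$ --- shows $1_{\Omega_0}-1_{\Omega_1}$ is also uniformly locally constant, whence each of $1_{\Omega_0}=\tfrac12(f+(1_{\Omega_0}-1_{\Omega_1}))$ and $1_{\Omega_1}$ is, and both sets are almost compact open. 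Your proposal, which invokes the theorem only with $V=\{1\}$, never uses the signed version and therefore cannot reach this conclusion.

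A secondary issue: once one knows $\Omega_0,\Omega_1$ are unions of cosets of $p^n\Z_p$, the trichotomy (and in particular the precise form of the $p=2$ twist by $2^{n-j_0-1}$ in case~(iii)) is not something the paper re-derives by CRT or counting arguments as you sketch; it is imported wholesale from the finite-group classification of tiles of $\Z/p^n\Z\times\Z/q\Z$ and $\Z/2^n\Z\times\Z/2\Z$ (Theorems~\ref{prop5.7} and~\ref{thmfinite}, from \cite{FKL}). Your plan to establish case~(iii) by a divisibility argument on the multiplicities $\#(B(x,p^n)\cap T_i)$ substantially underestimates this step: ruling out ``mixed'' configurations is precisely the content of the cited finite result, and you would need to reprove it.
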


As a consequence, we establish that  spectrality of tiles in the infinite group $\Q_p\times \Z/2\Z$.

\begin{corollary} Tiles in  $\Q_p \times \Z/2\Z$ are spectral sets.
\end{corollary}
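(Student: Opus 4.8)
The plan is to leverage Theorem~\ref{thm1.3} together with the resolution of Fuglede's conjecture in $\Q_p$ from \cite{FFLS}. The strategy is to reduce the spectrality of a tile $\Omega=\Omega_0\times\{0\}\cup\Omega_1\times\{1\}$ in $\Q_p\times\Z/2\Z$ to the spectrality of related tiles living in $\Q_p$ alone, and then produce an explicit spectrum $\Lambda\subset\widehat{\Q_p}\times\widehat{\Z/2\Z}$ by combining a spectrum in $\widehat{\Q_p}$ with the two characters of $\Z/2\Z$. First I would recall that $\widehat{\Q_p\times\Z/2\Z}\cong\widehat{\Q_p}\times\{0,1\}$, where the character indexed by $(\xi,\epsilon)$ acts on $(x,j)$ by $\chi_p(\xi x)(-1)^{\epsilon j}$, and that $L^2(\Omega)$ decomposes as $L^2(\Omega_0)\oplus L^2(\Omega_1)$ via the two fibers.

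Next I would treat the cases of Theorem~\ref{thm1.3} one at a time. In the case $\mu(\Omega_0\cap\Omega_1)=0$ with $\Omega_0\cup\Omega_1$ tiling $\Q_p$: here $|\Omega_0|+|\Omega_1|=|\Omega_0\cup\Omega_1|$ and $\Omega_0\cup\Omega_1$ is a tile in $\Q_p$, hence spectral by \cite{FFLS} with some spectrum $S\subset\widehat{\Q_p}$; I would then check that $\Lambda=S\times\{0\}\cup(S+\text{(shift)})\times\{1\}$, or more precisely a set built so that the characters restricted to $\Omega_0$ and to $\Omega_1$ remain orthogonal across fibers, forms an orthonormal basis of $L^2(\Omega)$ — the cross terms vanish because of the $(-1)^{\epsilon j}$ factor, which is exactly the device that makes a union of two ``fibered'' spectral sets spectral. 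In the case where $\Omega_0$ and $\Omega_1$ tile $\Q_p$ with a \emph{common} translation set $T_0$: then $\Omega_0\times\{0\}\cup\Omega_1\times\{1\}$ is tiled by $T_0\times\{0\}$ together with the diagonal-type translations, and each $\Omega_i$ is individually a tile in $\Q_p$, hence spectral (with spectra $S_0,S_1$); I would assemble $\Lambda = S_0\times\{0\}\cup S_1\times\{1\}$ after adjusting by the nontrivial character of $\Z/2\Z$ to restore orthogonality, using that $|\Omega_0|=|\Omega_1|=\tfrac12|\Omega|$. The subcase (iii) for $p=2$ reduces to the first case after applying the measure-preserving translation $x\mapsto x+2^{n-j_0-1}$ on the second fiber, which does not affect spectrality.

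The main obstacle I anticipate is verifying orthonormality and completeness of the candidate $\Lambda$ uniformly across all cases — in particular, matching the counting: one must check $\#\Lambda$ has the right ``density'' so that $\sum_{\lambda\in\Lambda}|\widehat{1_\Omega}(\lambda)|^2$ tiles at level $1$, equivalently that $\Lambda$ has the correct asymptotic density relative to $|\Omega|$. Here the key tool is that spectra in $\Q_p$ are automatically periodic/compact-open in structure (again from \cite{FFLS, FFS}), so the two pieces $S_0\times\{0\}$ and $S_1\times\{1\}$ can be chosen compatibly. Concretely, I would first prove the abstract lemma: \emph{if $A,B\subset\Q_p$ are measurable with $|A|=|B|$ and both are spectral in $\Q_p$ with spectra $S_A,S_B$, and if in addition $A\cup B$ (or $A\sqcup B$ after a translation) is spectral, then $A\times\{0\}\cup B\times\{1\}$ is spectral in $\Q_p\times\Z/2\Z$}; this lemma, whose proof is a direct orthogonality computation exploiting the characters of $\Z/2\Z$, then combines with Theorem~\ref{thm1.3} to cover every case and yield the Corollary.
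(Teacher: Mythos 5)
Your overall strategy (run through the cases of Theorem \ref{thm1.3} and import spectrality from the $p$-adic Fuglede theorem of \cite{FFLS}) is the same as the paper's, but several of your concrete constructions do not work, and the hardest case is dismissed with an unjustified claim. First, in the case $\mu(\Omega_0\cap\Omega_1)=0$ with $\Omega_0\cup\Omega_1$ spectral with spectrum $S$, your candidate $S\times\{0\}\cup(S+\mathrm{shift})\times\{1\}$ is overcomplete: the fiberwise identification $L^2(\Omega)\cong L^2(\Omega_0\sqcup\Omega_1)$ already makes $S\times\{0\}$ a complete orthonormal system, so no further vectors can be added; the correct (and simpler) choice is just $S\times\{0\}$, which also covers the possibility $\mu(\Omega_0)\neq\mu(\Omega_1)$ that your auxiliary lemma (which assumes $|A|=|B|$) excludes. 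Second, in the common-translation-set case, taking independent spectra $S_0,S_1$ of $\Omega_0,\Omega_1$ is not enough: for $\lambda\neq\lambda'$ in $S_0$ the within-fiber inner product is $\widehat{1_{\Omega_0}}(\lambda'-\lambda)+\widehat{1_{\Omega_1}}(\lambda'-\lambda)$, and the second term has no reason to vanish. The ingredient you need to make this precise is that a common tiling complement forces a \emph{common} spectrum $\Lambda$ (via the characterization of spectra by the admissible $p$-order set, Theorem \ref{thm5}), after which $(\Lambda\times\{0\})\sqcup(\Lambda\times\{1\})$ works because $\mu(\Omega_0)=\mu(\Omega_1)$ kills the diagonal cross terms.

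The genuine gap is case (iii) for $p=2$. You assert that translating the second fiber by $2^{n-j_0-1}$ ``does not affect spectrality,'' but translating \emph{one fiber only} is not a translation of $\Omega$ in the group $\Q_2\times\Z/2\Z$, and it does not carry characters to characters: under the natural unitary $L^2(\Omega)\to L^2(\Omega_0\sqcup\widetilde{\Omega}_1)$ the character $(\xi,\epsilon)$ picks up the fiber-dependent factor $\chi(2^{n-j_0-1}\xi)$ on the second fiber, which is not $\pm1$ for general $\xi$ in a spectrum of $\Omega_0\sqcup\widetilde{\Omega}_1$ (such spectra are unbounded). So the reduction to case (i) is not legitimate as stated. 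The paper instead handles this case by an explicit construction: it descends to the finite quotient $\Z/2^n\Z\times\Z/2\Z$ and exhibits a spectrum $\Lambda$ containing elements of the form $s_{j_0}(2^{j_0-n},1)$, i.e., elements that genuinely couple the $\Q_2$-frequency with the nontrivial character of $\Z/2\Z$. Some such coupling is unavoidable in this case, and your proposal has no mechanism to produce it.
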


\section{Preliminaries}

\subsection{The field  $\Q_p$ of $p$-adic numbers }\label{p-adicfield}
Let's start with a quick review of \(p\)-adic numbers. Consider the field \(\mathbb{Q}\) of rational numbers and a prime \(p \ge 2\). Any nonzero number \(r \in \mathbb{Q}\) can be expressed as \(r = p^v \frac{a}{b}\), where \(v, a, b \in \mathbb{Z}\) and \((p, a) = 1\) and \((p, b) = 1\) (here \((x, y)\) denotes the greatest common divisor of the integers \(x\) and \(y\)).
We define \(|r|_p = p^{-v_p(r)}\) for \(r \neq 0\) and \(|0|_p = 0\). Then \(|\cdot|_p\) is a non-Archimedean absolute value, meaning:
\begin{itemize}
    \item[(i)] \(|r|_p \ge 0\) and equality only when \(r = 0\),
    \item[(ii)] \(|r s|_p = |r|_p |s|_p\),
    \item[(iii)] \(|r + s|_p \leq \max\{ |r|_p, |s|_p\}\).
\end{itemize}

The field \(\mathbb{Q}_p\) of \(p\)-adic numbers is the completion of \(\mathbb{Q}\) under \(|\cdot|_p\). The ring \(\mathbb{Z}_p\) of \(p\)-adic integers is the set of \(p\)-adic numbers with absolute value at most 1. A typical element \(x\) of \(\mathbb{Q}_p\) is written as
\begin{equation}\label{HenselExp}
    x = \sum_{n = v}^\infty a_n p^{n} \quad (v \in \mathbb{Z}, a_n \in \{0,1,\dotsc, p-1\}, \text{ and } a_v \neq 0).
\end{equation}
Here, \(v_p(x) := v\) is called the \(p\)-{\em valuation} of \(x\).

A non-trivial continuous additive character on $\mathbb{Q}_p$ is defined by
$$
\chi(x) = e^{2\pi i \{x\}},
$$
where $\{x\} = \sum_{n=v_p(x)}^{-1} a_n p^n$ is the fractional part of $x$ in (\ref{HenselExp}).
Remark that
\begin{align}\label{one-in-unit-ball}
\chi(x) = e^{2\pi i k/p^n}, \quad \text{if } x \in \frac{k}{p^n} + \mathbb{Z}_p \ \  (k, n \in \mathbb{Z}),
\end{align}
and
\begin{align}\label{integral-chi}
\int_{p^{-n}\mathbb{Z}_p} \chi(x) \,dx = 0 \ \text{for all } n \geq 1.
\end{align}

 From the character $\xi$, we can obtain all characters $\chi_\xi$ of $\mathbb{Q}_p$ by defining
$\chi_\xi(x) = \chi(\xi x)$.
The map $\xi \mapsto \chi_{\xi}$ from $\mathbb{Q}_p$ to $\widehat{\mathbb{Q}}_p$ is an isomorphism. We write $\widehat{\mathbb{Q}}_p \simeq \mathbb{Q}_p$ and identify a point $\xi \in \mathbb{Q}_p$ with the point $\chi_\xi \in \widehat{\mathbb{Q}}_p$.
For more information on $\mathbb{Q}_p$ and $\widehat{\mathbb{Q}}_p$, the reader is referred to the book \cite{VVZ94}.

The following notation will be used in the whole paper.
\medskip
\\
\noindent {\bf Notation}:
\begin{itemize}
\item $\mathbb{Z}_p^\times := \mathbb{Z}_p\setminus p\mathbb{Z}_p=\{x\in \mathbb{Q}_p: |x|_p=1\}$,
the group of units of $\mathbb{Z}_p$.

\item $B(0, p^{n}): = p^{-n} \mathbb{Z}_p$,  the (closed) ball centered at $0$ of radius $p^n$.

\item $B(x, p^{n}): = x + B(0, p^{n})$.

\item $ S(x, p^{n}): = B(x, p^{n})\setminus B(x, p^{n-1})$,  a ``sphere".

%
\end{itemize}

\subsection{Fourier Transform}
The Fourier transform of  $f\in L^1(\Q_p)$ is defined to be
$$\widehat{f}(\xi)=\int_{\Q_p}f(x)\overline{\chi_\xi(x)} dx   \quad (\forall \xi\in \widehat{\Q}_p\simeq \Q_p).$$

A complex function $f$ defined on $\Q_p$ is called \textit{uniformly locally constancy} if  there exists $n\in \mathbb{Z}$ such that
\[f(x+u)=f(x) \quad \forall x\in \Q_p, \forall u \in B(0, p^n).\]
The following proposition shows that for  an integrable function $f$, having compact support and being uniformly locally constant are dual properties for $f$ and its Fourier transform.

\begin{proposition}[\cite{FFLS}]\label{Prop-compactConstant}
	Let $f\in L^1(\Q_p)$ be  a complex-value integrable function. Then
  $f$ has compact support if and only if  $\widehat{f}$ is uniformly locally constancy.
\end{proposition}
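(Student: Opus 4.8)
The plan is to prove both implications directly from the definition $\widehat{f}(\xi)=\int_{\Q_p}f(x)\overline{\chi_\xi(x)}\,dx$, using only two elementary ingredients: that the character $\chi$ is trivial exactly on $\Z_p$ (so that $\chi_u(x)=\chi(ux)=1$ if and only if $ux\in\Z_p$, which follows from \eqref{one-in-unit-ball} and the description of the fractional part), and the injectivity of the Fourier transform on $L^1(\Q_p)$. For the direction ``compact support $\Rightarrow$ uniformly locally constant'', suppose $f$ vanishes a.e.\ outside $B(0,p^N)=p^{-N}\Z_p$. For $\xi\in\Q_p$ and $u\in B(0,p^{-N})=p^N\Z_p$ I would write
\[
\widehat{f}(\xi+u)-\widehat{f}(\xi)=\int_{B(0,p^N)}f(x)\,\overline{\chi_\xi(x)}\bigl(\overline{\chi_u(x)}-1\bigr)\,dx .
\]
Whenever $x\in p^{-N}\Z_p$ and $u\in p^N\Z_p$ one has $ux\in\Z_p$, hence $\chi_u(x)=1$ and the integrand vanishes identically. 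Thus $\widehat{f}$ is constant on every ball of radius $p^{-N}$, i.e.\ uniformly locally constant with parameter $n=-N$.

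For the converse, suppose $\widehat{f}(\xi+u)=\widehat{f}(\xi)$ for all $\xi\in\Q_p$ and all $u\in B(0,p^n)=p^{-n}\Z_p$. The point is the modulation--translation duality: for a fixed $u$, the function $x\mapsto f(x)\overline{\chi_u(x)}$ lies in $L^1(\Q_p)$ and its Fourier transform at $\xi$ is exactly $\widehat{f}(\xi+u)$, because $\chi_u(x)\chi_\xi(x)=\chi_{\xi+u}(x)$. Hence the hypothesis says that $f\cdot\overline{\chi_u}$ and $f$ have the same Fourier transform, so by injectivity of the Fourier transform on $L^1(\Q_p)$ we get $f(x)\bigl(\overline{\chi_u(x)}-1\bigr)=0$ for a.e.\ $x$. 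Specializing to the single convenient choice $u=p^{-n}$ forces $f(x)=0$ for a.e.\ $x$ with $p^{-n}x\notin\Z_p$; equivalently, $f$ vanishes a.e.\ outside the compact set $B(0,p^{-n})$, which is precisely the statement that $f$ has compact support.

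I do not expect a genuine obstacle: the proof is essentially a two-line computation together with Fourier injectivity. The only points needing a little care are the verification that $\chi$ is trivial exactly on $\Z_p$, and the quantifier bookkeeping in the converse, where one passes from ``$f(\overline{\chi_u}-1)=0$ a.e.\ for each fixed $u$'' to the statement obtained from one well-chosen $u=p^{-n}$, rather than trying to intersect over the (uncountable) set of all admissible $u$. One could instead run the converse through the Fourier inversion formula, but that route would require first establishing $\widehat{f}\in L^1(\Q_p)$, whereas the modulation argument sidesteps this completely.
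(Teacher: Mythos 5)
Your proof is correct. Note that the paper itself offers no proof of this proposition --- it is quoted from \cite{FFLS} --- so there is nothing internal to compare against; judged on its own terms, your argument is the standard one and it goes through. The forward direction is exactly the right computation: for $x\in B(0,p^N)=p^{-N}\Z_p$ and $u\in B(0,p^{-N})=p^{N}\Z_p$ one has $|ux|_p\le 1$, hence $\chi_u(x)=1$ by \eqref{one-in-unit-ball}, and the difference $\widehat{f}(\xi+u)-\widehat{f}(\xi)$ vanishes. The converse via modulation--translation duality is also sound: the Fourier transform of $f\overline{\chi_u}$ is $\widehat{f}(\cdot+u)$, so local constancy of $\widehat{f}$ plus injectivity of the Fourier transform on $L^1(\Q_p)$ gives $f(\overline{\chi_u}-1)=0$ a.e., and the single choice $u=p^{-n}\in B(0,p^n)$ kills $f$ a.e.\ off $B(0,p^{-n})$, since $\chi(p^{-n}x)\ne 1$ precisely when $|x|_p>p^{-n}$. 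Your remark about avoiding the intersection over uncountably many $u$ is the right piece of bookkeeping. The only external ingredient you lean on is $L^1$-uniqueness of the Fourier transform on $\Q_p$, which is a standard fact for locally compact abelian groups (and is also how the argument in \cite{FFLS} effectively closes, there phrased via the identity $\widehat{f}=\widehat{f}*\theta_{n}$ for a normalized ball indicator and formula \eqref{FB}); it is legitimate to cite it, but you should state explicitly that you are doing so rather than leaving it implicit in the phrase ``injectivity of the Fourier transform.''
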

The Fourier transform of   a  positive finite measure $\mu $ on $\Q_p$ is defined to be
\[\widehat{\mu}(\xi)=\int_{\Q_p}\overline{\chi_\xi(x)} d\mu(x)   \quad (\forall \xi\in \widehat{\Q}_p\simeq \Q_p).\]
Let $\nu $ be a signed measure on ${\Q}_{p}$,  with
	 the finite total variation $\left | \nu  \right | $.  
		The measures
		$$\nu ^{+} =\frac{1}{2} \left ( \left | \nu  \right | +\nu  \right ) ,\mu ^{-} =\frac{1}{2} \left ( \left | \nu  \right | -\nu  \right )$$ are called the positive and negative variations of $\nu$.
	The Fourier transform of  $\nu $ is defined to be  
\[\widehat{\nu}(\xi) =\widehat{\nu^{+} }(\xi)-\widehat{\nu^{-} }(\xi)=\int_{{\Q}_{p} }\overline{\chi_{\xi} (x) } d\nu^{+}(\xi) -\int_{{\Q}_{p} }\overline{\chi_{\xi} (x) } d\nu^{-}(x)=\int_{{\Q}_{p} }\overline{\chi_{\xi} (x) } d\nu(x)\]
	for all $\xi \in \widehat{{\Q}}_{p}\cong {\Q}_{p}$.

\subsection{$\mathbb{Z}$-module generated by $p^n$-th roots of unity} \label{Zmodule}

Let $m\ge 2$ be an integer and let  $\omega_m = e^{2\pi i/m}$, which is a primitive $m$-th root of unity. Denote  by $\mathcal{M}_m$  the set of integral points
$(a_0, a_1, \dotsc, a_{m-1}) \in \mathbb{Z}^m$ such that
$$
\sum_{j=0} ^{m-1} a_j \omega_m^j =0.
$$
The set $\mathcal{M}_m$ is clearly a $\mathbb{Z}$-module. When $m=p^n$ is a power of a prime number, the  relation between the  coefficients is established.
\begin{lemma} [{\cite[Theorem 1]{Sch64}}]\label{SchLemma}
If $(a_0,a_1,\dotsc, a_{p^n-1})\in  \mathcal{M}_{p^n}$,
	then for any integer $0\le j\le p^{n-1}-1$ we have $a_j=a_{j+sp^{n-1}}$ for all $s=0,1,\dots, p-1$.
\end{lemma}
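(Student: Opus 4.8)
The plan is to translate the vanishing-sum hypothesis into a polynomial divisibility statement and then exploit the explicit shape of the cyclotomic polynomial $\Phi_{p^n}$. Writing $\omega = \omega_{p^n} = e^{2\pi i /p^n}$ and forming the integer polynomial
\[
P(x) = \sum_{j=0}^{p^n-1} a_j x^j \in \mathbb{Z}[x],
\]
the assumption $(a_0,\dots,a_{p^n-1}) \in \mathcal{M}_{p^n}$ says exactly that $P(\omega)=0$, i.e. $\omega$ is a root of $P$. The whole point is that the integrality of the coefficients $a_j$, combined with the fact that $\omega$ is an algebraic integer of known minimal polynomial, forces rigid structure on $P$.

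Next I would invoke the arithmetic of cyclotomic polynomials. The minimal polynomial of $\omega$ over $\mathbb{Q}$ is the $p^n$-th cyclotomic polynomial, which in the prime-power case has the explicit form
\[
\Phi_{p^n}(x) = \frac{x^{p^n}-1}{x^{p^{n-1}}-1} = \sum_{s=0}^{p-1} x^{s p^{n-1}} = 1 + x^{p^{n-1}} + x^{2p^{n-1}} + \cdots + x^{(p-1)p^{n-1}}.
\]
Since $\Phi_{p^n}$ is monic with integer coefficients and irreducible over $\mathbb{Q}$, from $P(\omega)=0$ it divides $P$ in $\mathbb{Q}[x]$; and because the divisor is monic with integer coefficients, the division algorithm keeps the quotient in $\mathbb{Z}[x]$. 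Hence there is $Q(x) \in \mathbb{Z}[x]$ with $P = \Phi_{p^n}\cdot Q$.

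Then I would run a degree count and simply read off coefficients. Since $\deg P \le p^n-1$ and $\deg \Phi_{p^n} = p^n - p^{n-1}$, we get $\deg Q \le p^{n-1}-1$, so $Q(x) = \sum_{k=0}^{p^{n-1}-1} b_k x^k$ with $b_k \in \mathbb{Z}$. Expanding the product,
\[
P(x) = \sum_{s=0}^{p-1}\sum_{k=0}^{p^{n-1}-1} b_k\, x^{s p^{n-1}+k},
\]
and the exponents $s p^{n-1}+k$ with $0 \le s \le p-1$, $0 \le k \le p^{n-1}-1$ run exactly once through $\{0,1,\dots,p^n-1\}$ by uniqueness of the base-$p^{n-1}$ digits. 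Comparing with $P(x)=\sum_j a_j x^j$ gives $a_{s p^{n-1}+k} = b_k$; in particular, fixing $k=j$ with $0\le j\le p^{n-1}-1$, the value $a_{j+s p^{n-1}} = b_j$ is independent of $s$, which is precisely the asserted relation $a_j = a_{j+s p^{n-1}}$ for $s=0,1,\dots,p-1$.

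The only genuinely delicate ingredient is the passage to an \emph{integer}-coefficient quotient: it rests on knowing that $\Phi_{p^n}$ is the minimal polynomial of $\omega$ (equivalently, its irreducibility over $\mathbb{Q}$) together with the fact that division by a monic polynomial in $\mathbb{Z}[x]$ stays within $\mathbb{Z}[x]$. Both are standard, and once they are in place the remainder is the explicit prime-power form of $\Phi_{p^n}$ and routine bookkeeping of exponents. I expect the conceptual crux to be recognizing that the statement is really the identity $P = \Phi_{p^n}\cdot Q$ in disguise; after that the periodicity of the $a_j$ is automatic.
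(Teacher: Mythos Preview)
Your argument is correct: the passage from $P(\omega)=0$ to $\Phi_{p^n}\mid P$ in $\mathbb{Z}[x]$, the degree bound on $Q$, and the coefficient comparison via the explicit form $\Phi_{p^n}(x)=\sum_{s=0}^{p-1}x^{sp^{n-1}}$ together yield the claim cleanly. Note that the paper does not supply its own proof of this lemma but simply cites \cite[Theorem~1]{Sch64}; your proof is the standard one and is in the same spirit as Schoenberg's original.
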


A  subset $C\subset \Q_p$ is called a  {\em $p$-cycle}  in $ \Q_{p}$
is a set of $p$  elements  such that
\[\sum_{c\in C} \chi(c)=0\]
which is equivalent to
\begin{align}\label{eq-pcycle}
C \equiv \left \{ r,r+\frac{1}{p} ,r+\frac{2}{p} ,\dots ,r+\frac{p-1}{p}  \right \} \mod \Z_{p}
\end{align}
for some $r\in \Q_{p} $. Note that  \eqref{eq-pcycle} is also equivalent to that $C$ is a subset of $p$ elements such that for distinct $c, c^{\prime}\in C$,
\[ |c-c^{\prime}|_p=p.\]
For a $p$-cycle $C$ in $\Q_p$ and $x\in \Z_p^{\times}$, it is easy to see that $x\cdot C$ is also a $p$-cycle in $\Q_p$, which is to say
\[\sum_{c\in C} \chi(x c)=0.\]

Let $ E\subset \Q_{p}$ be a finite subset.  If $\sum_{c\in E} \chi \left ( c  \right ) =0$,  then by Lemma \ref{SchLemma}, $E$ can be decomposed into some $p$-cycles.
Hence, the property $\sum_{c\in E} \chi(c) = 0$ of the set $E\subset \Q_p$ is invariant under rotations.
\begin{lemma}[{\cite[ Lemma 2.6]{FFS}}]\label{lem6}
	Let $\xi _{0}, \xi _{1},\cdots ,\xi _{m-1}$ be $m$ points in $\Q_{p}$. If\: $\sum_{j=0}^{m-1} \chi \left ( \xi _{j}  \right ) =0$, then $p\mid m$ and $$\sum_{j=0}^{m-1} \chi \left ( x\xi _{j}  \right ) =0$$for all $x\in \Z_{p}^{\times } $.
\end{lemma}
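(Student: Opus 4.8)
The plan is to reduce the vanishing character sum to a vanishing sum of $p^n$-th roots of unity, apply Schoenberg's relation (Lemma \ref{SchLemma}) to the resulting multiplicities, and then exhibit an explicit decomposition of the $m$ points into $p$-cycles, from which both conclusions follow simultaneously. This makes rigorous, and quantifies, the informal remark preceding the statement that a set with vanishing character sum "can be decomposed into some $p$-cycles."

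First I would pass to a common denominator. Since each $\{\xi_j\}$ is a $p$-adic fraction, there is an integer $n \ge 1$ and integers $k_j$ with $\chi(\xi_j) = \omega_{p^n}^{k_j}$, where $\omega_{p^n} = e^{2\pi i/p^n}$; here $n \ge 1$ because otherwise every $\chi(\xi_j) = 1$ and the sum would equal $m \ne 0$. Setting $a_l := \#\{j : k_j \equiv l \bmod p^n\}$ for $0 \le l \le p^n-1$, the hypothesis becomes $\sum_{l=0}^{p^n-1} a_l\,\omega_{p^n}^l = 0$, i.e. $(a_0,\dots,a_{p^n-1}) \in \mathcal{M}_{p^n}$, with the $a_l$ nonnegative integers summing to $m$.

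Next I would invoke Lemma \ref{SchLemma} to obtain $a_{l_0} = a_{l_0 + s p^{n-1}}$ for every $l_0 \in \{0,\dots,p^{n-1}-1\}$ and every $s = 0,\dots,p-1$; write $b_{l_0}$ for this common value. Thus within each fiber $\{l_0 + s p^{n-1} : 0 \le s \le p-1\}$ there are exactly $b_{l_0}$ of the $\xi_j$ at each of the $p$ residues. Grouping these into $b_{l_0}$ families, each containing one point $\xi$ per residue $l_0 + s p^{n-1}$, produces sets of $p$ points whose character values are $\omega_{p^n}^{l_0}\omega_p^{s}$ for $s = 0,\dots,p-1$ (using $\omega_{p^n}^{p^{n-1}} = \omega_p$). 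For two such points the character of their difference is a primitive $p$-th root of unity, so the fractional part of the difference is a nonzero multiple of $1/p$ and hence $|\xi - \xi'|_p = p$; by the distance characterization this shows each family is a $p$-cycle in the sense of \eqref{eq-pcycle}. This decomposes the multiset $\{\xi_j\}$ into $N := \sum_{l_0=0}^{p^{n-1}-1} b_{l_0}$ disjoint $p$-cycles, whence $m = pN$ and in particular $p \mid m$.

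Finally, invariance under scaling is immediate from the same decomposition: for a $p$-cycle $C$ and $x \in \Z_p^\times$ one has $|xc - xc'|_p = |x|_p|c-c'|_p = p$, so $xC$ is again a $p$-cycle and $\sum_{c \in C}\chi(xc) = 0$; summing over the $N$ cycles gives $\sum_{j}\chi(x\xi_j) = 0$. An alternative, purely Galois-theoretic route to this last step is to note that $\chi(x\xi_j) = \omega_{p^n}^{x_0 k_j}$ with $x_0 \equiv x \bmod p^n$ coprime to $p$, so $\sum_j \chi(x\xi_j)$ is the image of the vanishing sum $\sum_l a_l\,\omega_{p^n}^l$ under the automorphism $\omega_{p^n} \mapsto \omega_{p^n}^{x_0}$ of $\Q(\omega_{p^n})/\Q$, hence also zero; the $p$-cycle argument is preferred, however, because it yields $p \mid m$ at the same time. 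The main obstacle is the combinatorial bookkeeping in the decomposition step: one must check that Schoenberg's equal-multiplicity conclusion genuinely permits a consistent simultaneous matching of points across all $p$ residues of each fiber, and that each resulting $p$-element family satisfies the distance characterization \eqref{eq-pcycle} rather than merely summing to zero; everything else rests on the already-recorded facts that a $p$-cycle has pairwise distances equal to $p$ and that scaling by a unit preserves this property.
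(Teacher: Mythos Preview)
Your proposal is correct and follows precisely the approach the paper sketches in the paragraph preceding the lemma: reduce to a vanishing sum of $p^n$-th roots of unity, apply Schoenberg's relation (Lemma~\ref{SchLemma}) to obtain equal multiplicities across each fiber $\{l_0+sp^{n-1}:0\le s\le p-1\}$, decompose the multiset $\{\xi_j\}$ into $p$-cycles, and then use that $x\cdot C$ is again a $p$-cycle for $x\in\Z_p^\times$. The paper itself does not spell out a proof (it cites \cite[Lemma~2.6]{FFS}), but your write-up makes rigorous exactly the two-line argument the paper records, including the divisibility $p\mid m$ as a byproduct of the decomposition.
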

Actually, by  Lemmas  \ref{SchLemma} and \ref{lem6}, we have the following corollary.
\begin{corollary}\label{corollary2}
	Let $\xi _{0}, \xi _{1},\cdots ,\xi _{m-1}$ be $m$ points in $\Q_{p}$. If $\sum_{j=0}^{m-1} \alpha _{j} \chi \left ( \xi _{j}  \right ) =0$, where $\alpha _{j}\in \Z $, then $$\sum_{j=0}^{m-1}\alpha _{j} \chi \left ( x\xi _{j}  \right ) =0$$for all $x\in \Z_{p}^{\times } $.
\end{corollary}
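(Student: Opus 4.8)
The plan is to reduce the integer-coefficient relation to one in which every character appears with coefficient $1$, so that Lemma \ref{lem6} applies verbatim, and then to transport the resulting rotated identity back through the same reduction.

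First I would collect the terms with positive and negative coefficients separately. Writing $\alpha_j = \alpha_j^+ - \alpha_j^-$ with $\alpha_j^\pm \ge 0$, the hypothesis reads
\[\sum_{j} \alpha_j^+ \chi(\xi_j) = \sum_{j}\alpha_j^- \chi(\xi_j).\]
The left side is already a sum of $\sum_j \alpha_j^+$ characters counted with multiplicity. To turn the negative contributions into genuine characters as well, I would use the elementary identity
\[\sum_{s=1}^{p-1}\chi\left(\eta + \tfrac{s}{p}\right) = \chi(\eta)\sum_{s=1}^{p-1} e^{2\pi i s/p} = -\chi(\eta),\]
valid for every $\eta \in \Q_p$ because the $p$-th roots of unity sum to $0$. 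Thus each term $-\chi(\xi_j)$ may be replaced by the $p-1$ characters $\chi(\xi_j + s/p)$, $s = 1,\dots,p-1$. Performing this replacement $\alpha_j^-$ times for each $j$ produces a finite list of points $\eta_1,\dots,\eta_M \in \Q_p$ (with repetitions allowed) such that
\[\sum_{i=1}^M \chi(\eta_i) = \sum_j \alpha_j^+ \chi(\xi_j) - \sum_j \alpha_j^- \chi(\xi_j) = \sum_j \alpha_j \chi(\xi_j) = 0.\]

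Now Lemma \ref{lem6} applies to the list $\eta_1,\dots,\eta_M$ and gives $\sum_{i=1}^M \chi(x\eta_i) = 0$ for every $x \in \Z_p^\times$. It remains to recognize this rotated sum as $\sum_j \alpha_j \chi(x\xi_j)$. The positive part contributes $\sum_j \alpha_j^+ \chi(x\xi_j)$ directly. For the negative part, for each replaced term I would compute
\[\sum_{s=1}^{p-1}\chi\!\left(x\xi_j + \tfrac{xs}{p}\right) = \chi(x\xi_j)\sum_{s=1}^{p-1}\chi\!\left(\tfrac{xs}{p}\right) = -\chi(x\xi_j),\]
where the last equality uses that $x \in \Z_p^\times$: since $x \not\equiv 0 \pmod p$, multiplication by $x$ permutes the nonzero residues modulo $p$, so $\sum_{s=1}^{p-1}\chi(xs/p) = \sum_{t=1}^{p-1} e^{2\pi i t/p} = -1$. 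Summing these identities with the multiplicities $\alpha_j^-$ shows that the rotated list reproduces $\sum_j \alpha_j \chi(x\xi_j)$, and hence this quantity vanishes.

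The only point requiring care is the compatibility between the rotation by $x$ and the auxiliary shifts $s/p$ used to linearize the negative coefficients: it is essential that $x$ be a unit, so that $x\cdot(s/p)$ lands in $(xs \bmod p)/p + \Z_p$ and the cancellation $\sum_{s=1}^{p-1}\chi(xs/p) = -1$ persists after rotation. Equivalently, one could avoid the substitution entirely by expressing all $\chi(\xi_j)$ as $p^N$-th roots of unity, collecting the coefficients into a vector of $\mathcal{M}_{p^N}$, and invoking Lemma \ref{SchLemma} to see that the defining relation of $\mathcal{M}_{p^N}$ — namely that the coefficient sequence is constant on residues mod $p^{N-1}$ — is preserved under the index permutation $k \mapsto xk \bmod p^N$ induced by a unit $x$.
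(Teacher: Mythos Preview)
Your argument is correct: the reduction of integer coefficients to unit coefficients via the identity $\sum_{s=1}^{p-1}\chi(\eta+s/p)=-\chi(\eta)$ is sound, Lemma~\ref{lem6} then applies to the enlarged list, and your verification that the rotated list recombines to $\sum_j\alpha_j\chi(x\xi_j)$ (using that a unit $x$ permutes the nonzero residues mod $p$) is clean. The paper gives no proof beyond citing Lemmas~\ref{SchLemma} and~\ref{lem6}, and both your primary reduction and your sketched alternative (working directly in $\mathcal{M}_{p^N}$ and checking that the coset structure of Lemma~\ref{SchLemma} is preserved under the permutation $k\mapsto xk$) are exactly the natural ways to carry out that citation.
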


\subsection{Zeros of the Fourier Transform of a finite discrete signed measure}
Let   $E$ be a  non-empty finite subset of $\Q_p$ and  \[\nu=\sum_{x\in E} \alpha_{x}\delta_x,\quad  \alpha_{x}\in \Z\setminus\{0\}\]
be a  finite  measure supported in  the set $E$.
By definition, we have
\[ \widehat{\nu} (\xi)= \sum_{x\in E} \alpha_{x}\overline{\chi(\xi x)}= \sum_{x\in E} \alpha_{x}e^
{2\pi  i\{-\xi  x\}}.\]

By Corollary \ref{corollary2}, the zero set of the Fourier transform of a finite discrete measure $\nu$ is invariant under rotations.

\begin{lemma}\label{lem-fourierzero}
If $\widehat{\nu}(\xi)=0$, then  $\widehat{\nu}(\xi \cdot \zeta)=0$ for all $\zeta\in  \Z_{p}^{\times }$.
\end{lemma}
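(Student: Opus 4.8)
The plan is to unwind the definition of $\widehat\nu$ and reduce the claim to the rotational invariance of exponential sums already established in Corollary \ref{corollary2}. Writing $\nu = \sum_{x \in E} \alpha_x \delta_x$ with $E \subset \mathbb{Q}_p$ finite and $\alpha_x \in \mathbb{Z}\setminus\{0\}$, the definition of the Fourier transform of a finite discrete signed measure gives
\[
\widehat\nu(\xi) = \sum_{x \in E} \alpha_x \overline{\chi(\xi x)} = \sum_{x \in E} \alpha_x \chi(-\xi x),
\]
so the hypothesis $\widehat\nu(\xi) = 0$ says precisely that $\sum_{x \in E} \alpha_x \chi(-\xi x) = 0$. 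Setting $\xi_x := -\xi x$ for $x \in E$, this is an integer relation $\sum_{x \in E} \alpha_x \chi(\xi_x) = 0$ among the finitely many points $\{\xi_x : x \in E\} \subset \mathbb{Q}_p$, exactly the situation covered by Corollary \ref{corollary2}.

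The key step is then to apply Corollary \ref{corollary2} with $x = \zeta \in \mathbb{Z}_p^\times$: it yields $\sum_{x \in E} \alpha_x \chi(\zeta \xi_x) = 0$, i.e. $\sum_{x \in E} \alpha_x \chi(-\zeta \xi x) = 0$. Recognizing the left-hand side as $\sum_{x \in E} \alpha_x \overline{\chi((\zeta\xi) x)} = \widehat\nu(\zeta\xi)$, we conclude $\widehat\nu(\xi\cdot\zeta) = \widehat\nu(\zeta\xi) = 0$, which is the assertion. (One minor bookkeeping point: if some of the points $\xi_x$ coincide, one first groups equal terms so that Corollary \ref{corollary2} is applied to a list of distinct points with the aggregated integer coefficients; this does not affect the argument since rotation by $\zeta$ preserves equality and inequality of the $\xi_x$.)

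I do not anticipate a genuine obstacle here: the statement is essentially a restatement of Corollary \ref{corollary2} in the language of Fourier transforms of discrete measures, and the only thing to be careful about is the correct matching of signs and arguments in $\chi(-\xi x)$ versus $\overline{\chi(\xi x)}$, together with the trivial observation that $\mathbb{Z}_p^\times$ is closed under the multiplication used. The mild subtlety, if any, is simply ensuring the finite support and integrality hypotheses of Corollary \ref{corollary2} are met, which they are by the standing assumptions on $\nu$.
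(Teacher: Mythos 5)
Your proof is correct and follows exactly the route the paper intends: the paper gives no separate argument for this lemma beyond remarking that it follows from Corollary \ref{corollary2}, which is precisely the reduction you carry out. The sign bookkeeping via $\xi_x=-\xi x$ and the remark about grouping coincident points are both fine (indeed Corollary \ref{corollary2} does not require the $\xi_j$ to be distinct), so there is nothing to add.
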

A necessary condition for a point to be a zero of the Fourier transform of $\nu$ is provided.
\begin{lemma}\label{lem-necessary}
For $\xi\in \Q_p^{\times}$, if $\widehat{\nu}(\xi)=0$, then  for any $x\in E$, there exists  a distict point  $x^{\prime}\in E$ such that
\[|x-x^{\prime}|_p\leq  \frac{p}{|\xi|_p}. \]
\end{lemma}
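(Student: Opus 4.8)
\section*{Proof proposal for Lemma \ref{lem-necessary}}

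The plan is to argue by contradiction, converting the vanishing of $\widehat{\nu}(\xi)$ into a vanishing integral combination of $p^n$-th roots of unity and then invoking Lemma \ref{SchLemma}, whose block structure turns out to match exactly the ultrametric ball structure around the point under consideration.

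First I would reduce to the origin. Fix $x \in E$ and suppose, for contradiction, that no distinct $x' \in E$ satisfies $|x - x'|_p \le p/|\xi|_p$, that is, $E \cap B(x, p/|\xi|_p) = \{x\}$. Replacing $\nu$ by its translate $\sum_{y \in E} \alpha_y \delta_{y-x}$ multiplies $\widehat{\nu}$ by a unimodular character and hence preserves its zero set, so I may assume $x = 0 \in E$, $\alpha_0 \ne 0$, and $E \cap B(0, p/|\xi|_p) = \{0\}$. Writing $|\xi|_p = p^m$ with $m \in \mathbb{Z}$, the forbidden radius is $p^{1-m}$.

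Next, since $E$ is finite I would pick an integer $n \ge 1$ with $\xi y \in p^{-n}\mathbb{Z}_p$ for all $y \in E$, so that $\chi(-\xi y) = \omega_{p^n}^{c_y}$ for a unique $c_y \in \{0, 1, \dots, p^n - 1\}$, with $c_0 = 0$. The two facts that drive the argument are: $c_y = c_{y'}$ iff $\xi(y - y') \in \mathbb{Z}_p$, i.e.\ $|y - y'|_p \le p^{-m}$; and $c_y \equiv c_{y'} \pmod{p^{n-1}}$ iff $\chi(-\xi(y-y'))$ is a $p$-th root of unity, i.e.\ $|y-y'|_p \le p^{1-m}$. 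Setting $b_c = \sum_{y \in E :\, c_y = c} \alpha_y$, the hypothesis $\widehat{\nu}(\xi) = \sum_{y\in E}\alpha_y\,\overline{\chi(\xi y)} = 0$ reads $\sum_{c=0}^{p^n-1} b_c\, \omega_{p^n}^c = 0$, so $(b_0,\dots,b_{p^n-1}) \in \mathcal{M}_{p^n}$, and Lemma \ref{SchLemma} yields $b_0 = b_{sp^{n-1}}$ for $s = 0, 1, \dots, p-1$. But the points $y \in E$ contributing to some $b_{sp^{n-1}}$ are exactly those with $|y|_p \le p^{1-m}$, i.e.\ $E \cap B(0, p^{1-m}) = \{0\}$ by assumption; thus $b_0 = \alpha_0$ while $b_{sp^{n-1}} = 0$ for $s = 1, \dots, p-1$, forcing $\alpha_0 = 0$ and contradicting $\alpha_0 \in \mathbb{Z} \setminus \{0\}$.

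I expect the only real obstacle to be the bookkeeping: correctly matching the two scales $p^{-m}$ (on which $y \mapsto \chi(\xi y)$ is constant) and $p^{1-m}$ (on which it takes values in $\langle \omega_p \rangle$) with, respectively, equality and congruence-mod-$p^{n-1}$ of the exponents $c_y$, and checking that a nonempty $b_{sp^{n-1}}$-block sits in a single coset of $B(0, p^{1-m})$. An alternative that sidesteps Lemma \ref{SchLemma}: by Lemma \ref{lem-fourierzero} the function $\widehat{\nu}$ vanishes on the whole sphere $S(0, p^m) = \{\eta : |\eta|_p = p^m\}$, so integrating it over $S(0,p^m)$ against Haar measure and using \eqref{integral-chi} gives $(p-1)\sum_{y \in E \cap B(0,p^{-m})}\alpha_y = \sum_{y \in E \cap S(0,p^{1-m})}\alpha_y$; under the isolation assumption the left-hand side equals $(p-1)\alpha_0 \ne 0$ and the right-hand side equals $0$, the same contradiction.
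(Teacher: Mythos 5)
Your proof is correct and follows essentially the same route as the paper's: both reduce $\widehat{\nu}(\xi)=0$ to a vanishing integer combination of $p^N$-th roots of unity, apply Lemma \ref{SchLemma}, and translate the resulting congruence of exponents back into the ultrametric bound $p/|\xi|_p$; your normalization (translating $x$ to $0$ and arguing by contradiction) merely replaces the paper's rotation of $\xi$ to $-p^{-m}$ via Lemma \ref{lem-fourierzero} and its case split on whether the block sum $a_j$ vanishes. The sphere-integration alternative you sketch is also valid, though not truly independent of Lemma \ref{SchLemma}, since Lemma \ref{lem-fourierzero} rests on it.
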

\begin{proof}
Since $E$ is a finite subset of $\mathbb{Q}_p$, it is bounded. Specifically, let's assume that for each $x \in E$, the $p$-adic absolute value $|x|_p$ satisfies $|x|_p \leq p^n$ for some integer $n$.

For $\xi \in \Q_p^{\times}$, we have $|\xi|_p=p^m$ for some $m\in \Z$. By Lemma \ref{lem-fourierzero}, $\widehat{\nu} (\xi)=0$ if and only if $\widehat{\nu} (-1/p^m)=0$, which is equivalent to
\begin{align}\label{eq-Fourierzero}
  \sum_{x\in E} \alpha_{x}\chi_{1/p^m}(x)=0.
\end{align}

Note that the character $\chi_{1/p^m}$ is uniformly locally constant on the disks with radius $1/p^m$. Consider the map $\chi_{1/p^m}:E \to \{1, e^{2\pi i /p^{m+n}}, \cdots, e^{2\pi i (p^{m+n}-1)/p^{m+n}}\}$. Denote by $E_j=\{x\in E: \chi_{1/p^m}(x)=e^{2\pi i j/p^{m+n}} \}$.

Equality \eqref{eq-Fourierzero} is equivalent to
\[
\sum_{j=0} ^{p^{m+n}-1} a_j  e^{2\pi i j /p^{m+n}} =0,
\]
where $a_j=\sum_{x\in E_j} \alpha_x$.

For $x\in E$, assume that $\chi_{1/p^m}(x)=e^{2\pi i j /p^{m+n}} $ for some $0\leq j\leq p^{m+n}-1$. By Lemma \ref{SchLemma}, $a_j=a_{j+p^{m+n-1} \mod p^{m+n}}$. If $a_j=0$, then there exists $x^{\prime}\in E_j$ such that $x^{\prime}\neq x$. Noting that $\chi_{1/p^m}(x)= \chi_{1/p^m}(y)$ if and only if $|x-y|_p\leq 1/p^m$. Hence $|x-x^{\prime}|\leq 1/p^m$. On the other hand, if $a_j\neq 0$, then there exists $x^{\prime}\in E$ such that
\[\chi_{1/p^m}(x^{\prime})=e^{2\pi i \frac{j+p^{m+n-1}}{p^{m+n}} },\]
which implies \[|x-x^{\prime}|_p=\frac{1}{p^{m-1}}.\]

%
%
%
  \end{proof}

\subsection{Bruhat-Schwartz distributions in $\Q_p$}\label{subsec2.4}
Here, we provide a concise overview of the theory based on the works of \cite{AKS10, Tai75, VVZ94}.
Let $\mathcal{E}$ represent the space of uniformly locally constancy functions. The space $\mathcal{D}$ of \textit{Bruhat-Schwartz test functions} is defined as functions that are uniformly locally constancy and have compact support. A test function $f\in \mathcal{D}$ is a finite linear combination of indicator functions of the form $1_{B(x,p^k)}(\cdot)$, where $k\in \mathbb{Z}$ and $x\in \Q_p$. The largest such number $k$ is denoted by $\ell:= \ell(f)$ and referred to as the \textit{parameter of constancy} of $f$. As $f\in \mathcal{D}$ has compact support, the minimal number $\ell':=\ell'(f)$ such that the support of $f$ is contained in $B(0, p^{\ell'})$ exists and is named the \textit{parameter of compactness} of $f$.


Certainly, we have $\mathcal{D}\subset \mathcal{E}$. The space $\mathcal{D}$ is endowed with the topology of a topological vector space as follows: a sequence ${\phi_n }\subset \mathcal{D}$ is termed a \textit{null sequence} if there exist fixed integers $l$ and $l^{\prime}$ such that each $\phi_n$ is constancy on every ball of radius $p^l$, is supported by the ball $B(0,p^{l^{\prime}})$, and the sequence $\phi_n$ uniformly converges to zero.

A \textit{Bruhat-Schwartz distribution} $f$ on $\Q_p$ is defined as a continuous linear functional on $\mathcal{D}$. The value of $f$ at $\phi \in \mathcal{D}$ is denoted by $\langle f, \phi \rangle$. Notably, linear functionals on $\mathcal{D}$ are automatically continuous, facilitating the straightforward construction of distributions. Let $\mathcal{D}'$ denote the space of Bruhat-Schwartz distributions, equipped with the weak topology induced by $\mathcal{D}$.

A locally integrable function $f$ is treated as a distribution: for any $\phi \in \mathcal{D}$,
$$
\langle f,\phi\rangle=\int_{\Q_p} f\phi d\m.
$$

%
%


Let $E$ be a locally finite discrete subset in $\Q_p$, and let $\nu$ be a discrete signed measure supported in $E$ defined as
\begin{align}\label{eq-discretemeasure}
\nu = \sum_{x\in E} \alpha_{x}\delta_x, \quad \alpha_{x}\in \Z\setminus\{0\},
\end{align}
where $\{\alpha_x: x\in E\}$ is a bounded.
The discrete measure $\nu$ defined by  \eqref{eq-discretemeasure} is also a distribution:
for any $\phi \in \mathcal{D}$,
$$
\langle \nu,\phi\rangle= \sum_{x\in E} \alpha_x \phi(x).
$$
Here for each $\phi$, the sum is finite  because $E$ is locally finite and each ball contains at most a finite number of points in $E$.

Since the test functions in $\mathcal{D}$ are  uniformly locally constancy  and have compact support,
the following proposition is a direct consequence of Proposition \ref{Prop-compactConstant} or of the fact (see also \cite[Lemma 4]{Fan15}) that
\begin{equation}\label{FB}
\widehat{1_{B(c, p^k)}}(\xi) = \chi(-c \xi)\cdot p^{k} \cdot 1_{B(0, p^{-k})}(\xi).
\end{equation}

\begin{proposition}[\cite{Tai75}, Chapter II 3]
 The Fourier transformation  $f \mapsto \widehat{f}$ is a  homeomorphism  from $\mathcal{D}$ onto $\mathcal{D}$.
  \end{proposition}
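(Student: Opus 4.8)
The plan is to reduce everything to the explicit formula \eqref{FB} together with careful bookkeeping of the two parameters $\ell(f)$ and $\ell'(f)$ attached to a test function. First I would check that $\widehat f\in\mathcal D$ whenever $f\in\mathcal D$, with the two parameters exchanging roles. Given $f\in\mathcal D$ with parameter of constancy $\ell=\ell(f)$ and parameter of compactness $\ell'=\ell'(f)$, write $f=\sum_{j}a_j\,1_{B(c_j,p^{\ell})}$ as a finite sum, where the $c_j$ run over representatives (lying in $B(0,p^{\ell'})$) of the balls of radius $p^{\ell}$ on which $f$ does not vanish. Linearity and \eqref{FB} then give
\[
\widehat f(\xi)=p^{\ell}\,1_{B(0,p^{-\ell})}(\xi)\sum_j a_j\,\chi(-c_j\xi),
\]
so that $\widehat f$ is supported in $B(0,p^{-\ell})$, giving $\ell'(\widehat f)\le-\ell$; and since $|c_j|_p\le p^{\ell'}$ each character $\xi\mapsto\chi(-c_j\xi)$ is constant on balls of radius $p^{-\ell'}$, so $\widehat f$ is constant on balls of radius $p^{-\ell'}$ and $\ell(\widehat f)\ge-\ell'$. (The qualitative part here is also Proposition \ref{Prop-compactConstant}, but it is the quantitative exchange of $\ell$ and $\ell'$ that will make the continuity argument run.)

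Next I would establish bijectivity. Applying \eqref{FB} twice --- a short integral computation --- yields the inversion identity $\widehat{\widehat f}(x)=f(-x)$ for $f\in\mathcal D$. Since the reflection $f\mapsto f(-\,\cdot\,)$ is obviously a bijection of $\mathcal D$ onto itself, it follows that $f\mapsto\widehat f$ is a bijection of $\mathcal D$, whose inverse is $g\mapsto\widehat g(-\,\cdot\,)$, i.e. the Fourier transform followed by the reflection.

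For continuity, I would take a null sequence $(\phi_n)\subset\mathcal D$, i.e. fixed integers $\ell,\ell'$ with each $\phi_n$ constant on balls of radius $p^{\ell}$, supported in $B(0,p^{\ell'})$, and $\|\phi_n\|_\infty\to0$. By the first step every $\widehat{\phi_n}$ is constant on balls of radius $p^{-\ell'}$ and supported in $B(0,p^{-\ell})$ --- so the two governing parameters are frozen along the sequence --- while
\[
\sup_{\xi\in\Q_p}\bigl|\widehat{\phi_n}(\xi)\bigr|\le\int_{B(0,p^{\ell'})}|\phi_n(x)|\,dx\le p^{\ell'}\|\phi_n\|_\infty\longrightarrow 0 .
\]
Hence $\widehat{\phi_n}\to 0$ in $\mathcal D$, so the transform is continuous; and since its inverse is this same map composed with the reflection, which is trivially continuous on $\mathcal D$, the inverse is continuous as well, giving the homeomorphism.

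I expect the only real work to be in the first step: choosing a representation of $f$ with a single common ball radius and verifying that $\ell$ and $\ell'$ swap under the transform, plus recording the inversion formula $\widehat{\widehat f}=f(-\,\cdot\,)$ used for bijectivity. Once those are in place, bijectivity and continuity both fall out immediately from \eqref{FB} and the triangle inequality, so I do not anticipate any substantial obstacle.
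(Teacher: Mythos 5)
Your argument is correct and follows exactly the route the paper indicates: the paper gives no written proof, merely citing Taibleson and remarking that the proposition is a direct consequence of Proposition \ref{Prop-compactConstant} or of formula \eqref{FB}, and your write-up is precisely that argument carried out in detail (the swap of $\ell$ and $\ell'$ under the transform, the inversion identity $\widehat{\widehat f}(x)=f(-x)$, and the null-sequence check). The details all check out, so this is the same approach, just made explicit.
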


Let $f \in \mathcal{D}'$ be a distribution in $\Q_p$.  A  point $x\in \Q_p$ is  called a {\em  zero} of  $f$ if there exists an integer $n_0$
  such that  $$ \langle f, 1_{B(y,p^{n})}\rangle=0, \quad \text {for all }  y\in B(x,p^{n_0})  \text{ and all integers  }  n\leq  n_0 \text.$$
Denote by  $\mathcal{Z}_f$ the set of all zeros of $f$.
Remark that
 $\mathcal{Z}_f$ is  the maximal open set $O$ on which $f$ vanishes, i.e.
 $\langle f, \phi\rangle=0$ for all $\phi \in \mathcal{D}$ such that the support of $\phi$ is contained in $O$.
 The {\em support}  of a distribution $f$ is defined as  the complementary set of  $\mathcal{Z}_f$ and is denoted by  $\operatorname{supp}(f)$.	

The Fourier transform of a distribution $f \in  \mathcal{D}'$ is a new distribution $\widehat{f}\in \mathcal{D}'$  defined by the duality
\[ \langle \widehat{f},\phi\rangle=\langle f,\widehat{\phi}\rangle, \quad \forall  \phi \in \mathcal{D}.\]
The Fourier transform $f\mapsto \widehat{f}$ is  a homeomorphism on  $\mathcal{D}'$ under the weak topology  \cite[Chapter II 3]{Tai75}.

%

\subsection{Convolution and  multiplication of   distributions}
Denote $$
\Delta_k:=1_{B(0,p^k)}, \quad
\theta_k:=\widehat{\Delta}_k=p^k \cdot 1_{B(0,p^{-k})}.
$$
Let $f,g\in \mathcal{D}'$
 be two distributions. We define the {\em convolution} of $f$ and $g$ by
$$
\langle f*g,\phi \rangle =\lim\limits_{k\to \infty}	 \langle f(x), \langle g(\cdot),\Delta_k(x)\phi(x+\cdot) \rangle \rangle,
$$
if the limit exists for all $\phi \in \mathcal{D}$.

{
\begin{proposition} [\cite{AKS10}, Proposition  4.7.3  ] If $f\in \mathcal{D}^{\prime}$, then  $f*\theta_k\in \mathcal{E}$
with  the
parameter of constancy at least  $-k$.
\end{proposition}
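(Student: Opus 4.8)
\emph{Proof sketch.} The plan is to identify the distribution $f*\theta_k$ with the explicit function
\[
h(x):=p^{k}\,\langle f,\,1_{B(x,p^{-k})}\rangle ,
\]
and then to check that $h$ is constant on every ball of radius $p^{-k}$, which is precisely the assertion. I would first record the elementary properties of $\theta_k=p^{k}1_{B(0,p^{-k})}$: it is a test function with parameter of constancy $-k$, it is even (because $B(0,p^{-k})$ is symmetric), and for each fixed $x$ the translate $y\mapsto\theta_k(x-y)$ equals $p^{k}1_{B(x,p^{-k})}$, again a test function.

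Next I would unwind the definition of the distributional convolution. For $\phi\in\mathcal D$ the inner pairing $\langle\theta_k(t),\Delta_j(x)\phi(x+t)\rangle$ equals $\Delta_j(x)\,(\theta_k*\phi)(x)$ after the substitution $s=x+t$ and use of the evenness of $\theta_k$. Since $\theta_k*\phi$ is a test function with compact support, $\Delta_j$ is identically $1$ on its support once $j$ is large, so the expression $\langle f(x),\Delta_j(x)(\theta_k*\phi)(x)\rangle$ is eventually independent of $j$, and the limit in the definition equals $\langle f,\theta_k*\phi\rangle$. Hence $\langle f*\theta_k,\phi\rangle=\langle f,\theta_k*\phi\rangle$ for all $\phi\in\mathcal D$.

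I would then exploit the ultrametric structure of $\Q_p$: any two balls of equal radius are disjoint or equal. From $(\theta_k*\phi)(x)=p^{k}\int_{B(x,p^{-k})}\phi$ one sees that $\theta_k*\phi$ depends on $x$ only through the ball $B(x,p^{-k})$, so $\theta_k*\phi=\sum_{a}c_{a}1_{B(a,p^{-k})}$, a \emph{finite} sum in which $a$ ranges over the balls of radius $p^{-k}$ meeting $\operatorname{supp}\phi$ and $c_a=p^{k}\int_{B(a,p^{-k})}\phi$. The same observation shows $1_{B(x,p^{-k})}=1_{B(x',p^{-k})}$ whenever $|x-x'|_p\le p^{-k}$, so $h$ is well defined and constant on every ball of radius $p^{-k}$; thus $h\in\mathcal E$ with parameter of constancy at least $-k$. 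It remains to match the two distributions: using the linearity of $f$ over the finite sum,
\begin{align*}
\int h\,\phi\,dx
&=\sum_{a}\Bigl(\int_{B(a,p^{-k})}\phi\Bigr)p^{k}\langle f,1_{B(a,p^{-k})}\rangle\\
&=\Bigl\langle f,\sum_{a}c_{a}1_{B(a,p^{-k})}\Bigr\rangle=\langle f,\theta_k*\phi\rangle=\langle f*\theta_k,\phi\rangle .
\end{align*}
Therefore $f*\theta_k=h$ as distributions, and the proposition follows.

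The only delicate point — more bookkeeping than genuine obstacle — is the handling of the limit in the definition of the convolution, i.e. justifying that the truncations $\Delta_j$ may be dropped once $j$ is large enough. Everything else reduces to the ultrametric inequality and the linearity of $f$ on finite linear combinations of indicator functions of balls.
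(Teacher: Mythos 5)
Your argument is correct. Note, however, that the paper itself offers no proof of this proposition: it is imported verbatim from \cite{AKS10} (Proposition 4.7.3), so there is no in-paper argument to compare against. What you have written is a sound, self-contained verification: the reduction $\langle f*\theta_k,\phi\rangle=\langle f,\theta_k*\phi\rangle$ is legitimate because $\theta_k$ is an even integrable test function, $\theta_k*\phi$ is again a test function (a finite linear combination $\sum_a c_a 1_{B(a,p^{-k})}$ by the ultrametric dichotomy for balls of equal radius), and the truncations $\Delta_j$ stabilize once $B(0,p^j)$ contains $\operatorname{supp}(\theta_k*\phi)\subset\operatorname{supp}\phi+B(0,p^{-k})$, which is compact. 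The identification of $f*\theta_k$ with $h(x)=p^{k}\langle f,1_{B(x,p^{-k})}\rangle$ then follows from linearity of $f$ on the finite sum, and $h$ is manifestly constant on each ball of radius $p^{-k}$ since $B(x,p^{-k})=B(x+u,p^{-k})$ for $|u|_p\le p^{-k}$; as $h$ is locally constant it is locally integrable and genuinely represents the distribution $f*\theta_k$. The one point you flag as delicate (dropping $\Delta_j$) is indeed the only point needing care, and your treatment of it is adequate.
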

}

We define the
{\em  multiplication} of $f$ and $g$ by
$$
\langle f\cdot g,\phi \rangle =\lim\limits_{k\to \infty}	 \langle g, ( f*\theta_k)\phi  \rangle,
$$
if the limit exists for all $\phi \in \mathcal{D}$.        The above definition of convolution is
compatible with the usual convolution of two integrable functions and the definition of multiplication is compatible with  the usual  multiplication of two locally integrable functions.

The following proposition shows that both the convolution and the multiplication are commutative when they are well defined and the convolution of two distributions is well defined if and only if the multiplication 
of their Fourier transforms is well defined.

\begin{proposition} [\cite{VVZ94}, Sections 7.1 and 7.5] \label{Conv-Mul} Let $f, g\in \mathcal{D}'$ be two distributions. Then\\
\indent {\rm (1)} \  If $f*g$ is well defined, so is $g*f$ and  $f*g=g*f$.\\
\indent {\rm (2)} \ If $ f\cdot g$ is well defined, so is  $g\cdot f$
 and  $ f\cdot g= g\cdot f$.\\
 \indent {\rm (3)} \ $f*g$ is well defined  if and only   $\widehat{f}\cdot \widehat{g}$ is well defined. In this case, we have
$
\widehat{f*g}=\widehat{f}\cdot\widehat{g}$ and
$ \widehat{f\cdot g}=\widehat{f}*\widehat{g}.
$
\end{proposition}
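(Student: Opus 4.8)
\emph{Proof proposal.} The plan is to reduce all three statements to two manageable ingredients: the elementary case in which one factor is a test function, and a symmetric double-regularization of the defining limits. Write $\ell'(\phi)$ for the parameter of compactness of $\phi\in\mathcal{D}$, and recall the two regularizing families attached to $h\in\mathcal{D}'$: the smoothing $h*\theta_k\in\mathcal{E}$ (parameter of constancy at least $-k$) and the truncation $h\,\Delta_k$, a compactly supported distribution. Since balls centred at $0$ are symmetric and $\theta_k=\widehat{\Delta}_k$, Fourier inversion gives $\widehat{\theta}_k=\Delta_k$, so the two families are exchanged by the Fourier transform; moreover $h*\theta_k\to h$ and $h\,\Delta_k\to h$ in $\mathcal{D}'$ (the latter because $\langle h\,\Delta_k,\phi\rangle=\langle h,\phi\rangle$ once $k\ge\ell'(\phi)$, the former because $\theta_k$ is an approximate identity). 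The base case to settle first is that for $f\in\mathcal{D}'$ and $\psi\in\mathcal{D}$ the products $f\cdot\psi$ and $f*\psi$ are always defined, with $\langle f\cdot\psi,\phi\rangle=\langle f,\psi\phi\rangle$ and $f*\psi$ the function $x\mapsto\langle f(y),\psi(x-y)\rangle\in\mathcal{E}$, and that the exchange law $\widehat{f\cdot\psi}=\widehat{f}*\widehat{\psi}$, $\widehat{f*\psi}=\widehat{f}\cdot\widehat{\psi}$ holds; these follow by dualizing the classical identities $\widehat{\psi_1*\psi_2}=\widehat{\psi_1}\,\widehat{\psi_2}$, $\widehat{\psi_1\psi_2}=\widehat{\psi_1}*\widehat{\psi_2}$ on $\mathcal{D}$, valid because the Fourier transform is a homeomorphism of $\mathcal{D}$. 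Taking $\psi=\Delta_k$ yields the bridge $\widehat{f}*\theta_k=\widehat{f\,\Delta_k}$, relating smoothing of $\widehat f$ to truncation of $f$.

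For part (1) the only issue is that the defining limit uses the one-sided cutoff $\Delta_k(x)$ and so is not visibly symmetric. The key observation is that for $k\ge\ell'(\phi)$ the support of $\phi$ forces $\Delta_k(x)\phi(x+y)=\Delta_k(x)\Delta_k(y)\phi(x+y)$ for all $x,y$ (if $|x|_p\le p^k$ and $\phi(x+y)\ne 0$ then $|y|_p\le p^k$). Hence the defining limit may be computed with the \emph{symmetric} double cutoff, and since $\Delta_k(x)\Delta_k(y)\phi(x+y)\in\mathcal{D}(\Q_p\times\Q_p)$, the Fubini/kernel identity
\[\langle f(x),\langle g(y),F(x,y)\rangle\rangle=\langle g(y),\langle f(x),F(x,y)\rangle\rangle,\qquad F\in\mathcal{D}(\Q_p\times\Q_p),\]
shows the limit is symmetric in $f$ and $g$. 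Thus $f*g$ well defined implies $g*f$ well defined and equal.

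Part (3) is the heart of the matter, and I would prove $\widehat{f*g}=\widehat{f}\cdot\widehat{g}$ together with the equivalence of well-definedness in one stroke, the companion identity $\widehat{f\cdot g}=\widehat{f}*\widehat{g}$ then following by Fourier inversion. Writing the double-cutoff test function as $F_k(x,y)=\Delta_k(x)\Delta_k(y)\phi(x+y)=\widehat{G_k}$ and using $\widehat{f\otimes g}=\widehat{f}\otimes\widehat{g}$ with the duality $\langle f\otimes g,\widehat{G_k}\rangle=\langle\widehat{f}\otimes\widehat{g},G_k\rangle$, the defining limit of $\langle f*g,\phi\rangle$ (legitimate by part (1)) is transported to the frequency side. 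There the transform of $\phi(x+y)$ concentrates on the diagonal $\{\xi=\eta\}$, converting the sum-variable convolution into a diagonal product, while the cutoff $\Delta_k\otimes\Delta_k$ becomes smoothing by $\theta_k\otimes\theta_k$; letting $k\to\infty$ and invoking the bridge $\widehat{f}*\theta_k=\widehat{f\,\Delta_k}$ identifies the limit with $\langle\widehat{f}\cdot\widehat{g},\phi\rangle$ in the sense of the multiplication definition. The same computation shows the two limits exist simultaneously, giving the stated equivalence.

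Finally, part (2) is a formal consequence: if $f\cdot g$ is well defined then (3) gives $\widehat{f}*\widehat{g}$ well defined with $\widehat{f\cdot g}=\widehat{f}*\widehat{g}$, part (1) gives $\widehat{f}*\widehat{g}=\widehat{g}*\widehat{f}=\widehat{g\cdot f}$, and injectivity of the Fourier transform on $\mathcal{D}'$ yields $f\cdot g=g\cdot f$. The main obstacle throughout is the justification of the limit interchanges on the frequency side in part (3): one must verify that the $\theta_k\otimes\theta_k$-smoothing converges to the diagonal pairing against $\widehat{f}\otimes\widehat{g}$, which requires uniform control of the parameters of constancy and compactness so that the relevant families form null sequences in $\mathcal{D}$. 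It is precisely the cited fact that $f*\theta_k\in\mathcal{E}$ with controlled constancy parameter that makes all the intermediate products genuine test functions and legitimizes these pairings.
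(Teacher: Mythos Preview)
The paper does not prove this proposition at all: it is quoted from \cite{VVZ94} (Sections~7.1 and~7.5) and used as a black box, so there is no ``paper's own proof'' to compare against.

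As for the content of your sketch: your symmetrization in part~(1) via the identity $\Delta_k(x)\phi(x+y)=\Delta_k(x)\Delta_k(y)\phi(x+y)$ for $k\ge \ell'(\phi)$ is correct and is the standard device used in \cite{VVZ94}; combined with the kernel/Fubini identity on $\mathcal{D}(\Q_p\times\Q_p)$ it gives commutativity cleanly. Your reduction of part~(2) to parts~(1) and~(3) via Fourier inversion is also exactly how the book proceeds. Your treatment of part~(3), however, is where the outline thins out: writing $F_k=\widehat{G_k}$ and invoking a two-variable Fourier transform that ``concentrates on the diagonal'' is heuristically right, but the actual argument in \cite{VVZ94} is more pedestrian and avoids the two-variable detour --- one computes directly that $\langle f*g,\widehat{\phi}\rangle$ unfolds, via the explicit formula $\widehat{1_{B(c,p^k)}}=p^k\chi(-c\,\cdot)1_{B(0,p^{-k})}$ and the identity $\widehat{f\Delta_k}=\widehat{f}*\theta_k$, into the defining expression for $\langle \widehat{f}\cdot\widehat{g},\phi\rangle$. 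Your acknowledged ``main obstacle'' (controlling the $\theta_k\otimes\theta_k$-smoothing near the diagonal) is a genuine technical point your sketch does not resolve; the one-variable route sidesteps it. So the plan is sound but part~(3) as written is not yet a proof.
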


\section{Proof of Theorem  \ref{thm-tilingperiodic}}

In this section,
we will   study the following equation
\begin{equation}\label{EQ}
 f*\nu = w
\end{equation}
where  $ f\in L^1(\Q_p)$ with  $\int_{\Q_p}fd\m>0$, and
$\nu$ is  a locally finite  signed measure defined  by \eqref{eq-discretemeasure}  in $\Q_p$. 
Our discussion is based on the functional equation \[\widehat{f}\cdot\widehat{\nu}=w\delta_0,\] which is implied by
 $f*\nu=w$ (see Proposition \ref{Conv-Mul}).

 \subsection{Zeros of the Fourier transform  of  the  measure $\nu$}
Let  $\nu$  be a discrete signed  measure defined by  \eqref{eq-discretemeasure}. The support  of $\nu$ is the discrete set $E$. Denote by
$\nu_n$ the restriction of $\nu$ on the ball $B(0,p^n)$,i.e.
\[ \nu_n=\sum_{x\in E\cap B(0,p^n) } \alpha_{x}\delta_x. \]
Consider $\nu$  as a  distribution on  $\Q_p$.
For simplicity, denote by $E_n=E\cap B(0,p^n) $ the restriction of $E$ on the ball $B(0, p^n)$.
The following proposition characterizes the structure of  $ \mathcal{Z}_{\widehat{\nu}}$, the set of zeros of the
Fourier transform of $\nu$.  It is bounded and is a union of spheres centered at $0$.
Recall that  the support  of $\nu$ is a locally finite subset. Define
\begin{align}\label{def-nv}
n_{\nu}:=\inf_{\lambda \in E  } \max_{\substack {\lambda^{\prime}\in E\\  \lambda\neq \lambda^{\prime}} } v_p(\lambda- \lambda^{\prime}).
\end{align}
Remark that $n_{\nu}$ can be $-\infty$.

\begin{proposition}\label{zeroofE}
Let $\nu$ be a  discrete measure   defined by  \eqref{eq-discretemeasure} in $\Q_p$.\\
\indent {\rm (1)}
If $\xi\in  \mathcal{Z}_{\widehat{\nu}}$,  then $S(0,|\xi|_p)\subset   \mathcal{Z}_{\widehat{\nu}}$.\\
\indent {\rm (2)} The set $ \mathcal{Z}_{\widehat{\nu}}$  is contained in   $B(0,p^{n_\nu+1})$.
\end{proposition}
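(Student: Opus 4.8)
The plan is to work directly with the characterization of zeros of $\widehat{\nu}$ as a distribution, reducing everything to the finite exponential sums $\sum_{x \in E_n} \alpha_x \chi(\xi x)$ and then invoking the rotation-invariance (Lemma \ref{lem-fourierzero}) and the necessary condition (Lemma \ref{lem-necessary}). For part (1), suppose $\xi \in \mathcal{Z}_{\widehat{\nu}}$ with $|\xi|_p = p^m$. By definition of a zero of a distribution, there is an integer $n_0$ such that $\langle \widehat{\nu}, 1_{B(\eta, p^n)}\rangle = 0$ for all $\eta \in B(\xi, p^{n_0})$ and all $n \le n_0$; using the Fourier-transform duality $\langle \widehat{\nu}, \phi\rangle = \langle \nu, \widehat{\phi}\rangle$ together with \eqref{FB}, this translates into a vanishing statement for the finite sums $\sum_{x \in E} \alpha_x \chi(\eta x)$ suitably averaged over balls (the sums are finite once we localize $\phi$ to a fixed ball, because $E$ is locally finite). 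Then I would apply Lemma \ref{lem-fourierzero}: if $\widehat{\nu}$ vanishes near $\xi$, multiplying the frequency variable by any unit $\zeta \in \mathbb{Z}_p^\times$ preserves the vanishing, and since $S(0, |\xi|_p) = \xi \cdot \mathbb{Z}_p^\times$ this gives $S(0, |\xi|_p) \subset \mathcal{Z}_{\widehat{\nu}}$. The one technical point to handle carefully is that Lemma \ref{lem-fourierzero} as stated is about pointwise vanishing of the Fourier transform of a \emph{finite} measure, so I would first restrict to $\nu_n$ for $n$ large enough that $\xi$ and its orbit interact only with $E_n$, argue for $\nu_n$, and then note the conclusion is independent of $n$.

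For part (2), I would show the contrapositive flavored statement: if $|\xi|_p = p^m > p^{n_\nu + 1}$, i.e. $m \ge n_\nu + 2$, then $\xi \notin \mathcal{Z}_{\widehat{\nu}}$. By definition of $n_\nu$ in \eqref{def-nv}, there exists $\lambda \in E$ such that for every other $\lambda' \in E$ we have $v_p(\lambda - \lambda') \le n_\nu$, equivalently $|\lambda - \lambda'|_p \ge p^{-n_\nu}$ for all $\lambda' \ne \lambda$ — so $\lambda$ is a point of $E$ that is $p$-adically \emph{well-separated} from the rest of $E$, with separation at least $p^{-n_\nu}$. Now apply Lemma \ref{lem-necessary}: if $\widehat{\nu}(\xi) = 0$ with $|\xi|_p = p^m$, then for this particular $\lambda$ there must be a distinct $\lambda' \in E$ with $|\lambda - \lambda'|_p \le p/|\xi|_p = p^{1-m}$. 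Combining the two inequalities forces $p^{-n_\nu} \le p^{1-m}$, i.e. $m \le n_\nu + 1$, contradicting $m \ge n_\nu + 2$. Hence any genuine zero $\xi$ of $\widehat{\nu}$ satisfies $|\xi|_p \le p^{n_\nu + 1}$, which is precisely $\xi \in B(0, p^{n_\nu+1})$. As in part (1), Lemma \ref{lem-necessary} is a statement about a finite set $E$, so I would first pass to $\nu_n$ with $n$ large enough to contain $\lambda$ and enough of its neighbors, apply the lemma there, and check that the separation constant $n_\nu$ computed from all of $E$ still does the job (shrinking to $E_n$ can only increase separations, which is the favorable direction).

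The main obstacle I anticipate is bookkeeping at the interface between the distributional definition of ``zero'' and the pointwise/finite-sum lemmas: one must be careful that the vanishing of $\langle \widehat{\nu}, 1_{B(\eta,p^n)}\rangle$ on a whole ball of $\eta$'s and for all sufficiently negative $n$ really does encode the statement ``$\sum_{x\in E}\alpha_x\chi(\xi x) = 0$'' in the form needed by Lemmas \ref{lem-fourierzero} and \ref{lem-necessary}, including the reduction to the finite truncations $\nu_n$ and the verification that the integrals $\int \chi$ over the relevant balls vanish or not as dictated by \eqref{integral-chi}. Once that dictionary is set up cleanly, both parts are short: part (1) is the rotation invariance of the zero set, and part (2) is a one-line absolute-value inequality from the definition of $n_\nu$ and Lemma \ref{lem-necessary}. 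I would also remark, as the statement already notes, that when $n_\nu = -\infty$ the set $E$ has no isolated-type separated point and the bound in (2) should be read as $\mathcal{Z}_{\widehat{\nu}}$ possibly being all of $\mathbb{Q}_p$ or requiring the separate degenerate analysis; but under the standing hypotheses (bounded coefficients, $E$ the support of a measure arising from a tiling) $n_\nu$ will be finite and the argument applies verbatim.
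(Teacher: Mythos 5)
Your argument is essentially the paper's own proof. Both parts rest on the identity $\langle \widehat{\nu}, 1_{B(\xi,p^{-n})}\rangle = p^{-n}\widehat{\nu_n}(\xi)$, which converts the distributional notion of zero into vanishing of the finite sums $\sum_{x\in E_n}\alpha_x\chi(\xi x)$; part (1) is then exactly the rotation invariance of Corollary \ref{corollary2}, and part (2) is the combination of a well-separated point of $E$ supplied by the definition of $n_\nu$ with Lemma \ref{lem-necessary}, which is precisely the paper's Case 2.

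The one place you go astray is the closing remark on $n_\nu=-\infty$. You suggest that in this degenerate case $\mathcal{Z}_{\widehat{\nu}}$ might be all of $\mathbb{Q}_p$, and you propose to exclude the case by appealing to a tiling hypothesis; but the proposition carries no such hypothesis, and the situation is the opposite of what you describe. Since $n_\nu=\inf_{\lambda}\max_{\lambda'\neq\lambda}v_p(\lambda-\lambda')$, the value $-\infty$ means $E$ contains points whose nearest neighbour is arbitrarily far away; so for every $\xi\neq 0$ one can choose $x_0\in E$ separated from all other points of $E$ by more than $p/|\xi|_p$, and the same appeal to Lemma \ref{lem-necessary} shows $\xi\notin\mathcal{Z}_{\widehat{\nu}}$. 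Hence $\mathcal{Z}_{\widehat{\nu}}\subset\{0\}$, which is the correct reading of the bound $B(0,p^{n_\nu+1})$ when $n_\nu=-\infty$, and is exactly how the paper handles this case. Your main argument extends verbatim to cover it; only the remark needs to be corrected.
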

\begin{proof}
 First remark that by using (\ref{FB}) we get  
	\begin{equation}\label{FofMu}
		\langle \widehat{\nu}, 1_{B(\xi,p^{-n})} \rangle
		=\langle \nu, \widehat{1_{B(\xi,p^{-n})}} \rangle
		=p^{-n}\sum_{x\in E_n} \alpha_x \cdot \overline{\chi(\xi x)}=p^{-n} \widehat{\nu_n}(\xi).
	\end{equation}
	This expression will be used several times.
	
	\indent {\rm (1)}
By definition,  $\xi\in  \mathcal{Z}_{\widehat{\nu}}$
implies that
there exists an integer $n_0$ such that
 $$\langle \widehat{\nu}, 1_{B(\xi,p^{-n})} \rangle=0 , \quad \forall~ n\geq n_0. $$
By (\ref{FofMu}), this is equivalent to
 \begin{equation}\label{zeross}
 \sum_{x\in E_n} \alpha_x \cdot \chi(\xi x)=0 ,\quad \forall~ n\geq n_0.
 \end{equation}	

For any  $\xi^{\prime}\in S(0,|\xi|_p)$,  we have $\xi' = u \xi$ for some $u\in \mathbb{Z}_p^\times$.  By Corollary \ref{corollary2} and  the equality (\ref{zeross}), we obtain
	$$
	\sum_{x\in E_n}\alpha_x \cdot \chi(\xi^{\prime}x )= \sum_{x\in E_n}\alpha_x \cdot  \chi(\xi x)=0,\quad \forall~ n\geq n_0.
	$$
	 Thus, again by (\ref{FofMu}), $\langle \widehat{\nu}, 1_{B(\xi',p^{-n})} \rangle=0 $ for $ n\geq n_0$. 
	 We have thus proved $S(0,|\xi|_p)\subset \mathcal{Z}_{\widehat{\nu}}$.

 \indent {\rm (2)} We distinguish two cases based on the value of $n_{\nu}$ : $n_\nu=-\infty$  or $n_\nu>-\infty$. 
 
{ \bf Case 1: $n_\nu=-\infty$.} Fix $\xi \in  \Q_p\setminus\{0\}$, since $n_\nu=-\infty$,  there exists  $x_0\in E$ such  that 
 $$\forall x \in   E \setminus \{x_0\}, \quad |x-x_0|_p\geq 1/|\xi|_p.$$
 We are going to show that  $ \mathcal{Z}_{\widehat{\nu}}\subset \{0\}$, which is equivalent to that  for each  $\xi \in \Q_p\setminus\{0\},$ 
 \[ \langle \widehat{\nu}, 1_{B(\xi,p^{-n})} \rangle \neq 0\]
 for sufficient large $n$ such that $x_0\in B(0,p^n)$.   In fact, if this is not the case, then by (\ref{FofMu}),  $\widehat{\nu_n}(\xi)=0.$
 Thus by Lemma \ref{lem-necessary}, we can find $x\in E_n$ 
 such that \[ |x-x_0|=p/|\xi|_p<1/|\xi|_p,\]
 a contradiction.
 
{\bf  Case 2: $n_\nu>-\infty$.}
 Hence, there exists  a  point    $x_0 \in E$ such that  
 \[n_\nu= \max_{\substack {x\in E\\  x\neq x_0} } v_p(x- x_0).\]
 It follows that 
$$\forall x \in   E \setminus \{x_0\}, \quad |x-x_0|_p\geq p^{-n_\nu}.$$
We are going to show that  $ \mathcal{Z}_{\widehat{\nu}}\subset B(0,p^{n_\nu+1})$, which is equivalent to that $\xi \not \in  \mathcal{Z}_{\widehat{\nu}}$ when $|\xi|_p\geq p^{n_\nu+2}$. To this end, we will prove that for all integer $n$ large enough such that $x_0 \in B(0, p^n)$, we have
$$\langle \widehat{\nu}, 1_{B(\xi,p^{-n})} \rangle \neq 0.$$
In fact, if this is not the case, then by (\ref{FofMu}),  $\widehat{\nu_n}(\xi)=0.$
Thus, by Lemma \ref{lem-necessary}, for the given $x_0$, we can find $x \in E_n$, such that \[|x-x_0|_p\leq p/|\xi|_p <p^{-n_{\nu}},\]  a contradiction.

\end{proof}
\subsection{Proof of Theorem \ref{thm-tilingperiodic}}

For a function $g: \Q_p \rightarrow \mathbb{R}$, denote
$$
   \mathcal{N}_g :=\{x \in \Q_p: g(x)=0\}.
$$
If $g\in C(\Q_p)$ is a continuous function, then $\mathcal{N}_g $ is a closed set and $ \mathcal{Z}_g$ is the set of interior points of $\mathcal{N}_g $.
  However, the support of $g$ as a continuous function is equal to the support of $g$ as  a distribution.

\begin{proposition}[{\cite[Proposition 3.2]{FFLS}}]\label{mainlem}
	Let $g\in C(\Q_p)$ be a continuous function and let $G\in \mathcal{D}'$ be a distribution. Suppose that the product $H=g\cdot G$ is well defined. 	Then
		$$\mathcal{Z}_{H} \subset \mathcal {N}_{g} \cup \mathcal{Z}_{G}.$$
		\end{proposition}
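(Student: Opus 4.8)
The plan is to reduce the statement to a purely local computation against test functions $1_{B(y,p^n)}$, since both $\mathcal{Z}_H$ and $\mathcal{N}_g\cup\mathcal{Z}_G$ are defined via the vanishing of such pairings. Fix a point $x_0\notin \mathcal{N}_g\cup\mathcal{Z}_G$; I want to show $x_0\notin\mathcal{Z}_H$, i.e. that for every integer $n_0$ there exist $y\in B(x_0,p^{n_0})$ and $n\le n_0$ with $\langle H,1_{B(y,p^n)}\rangle\ne 0$. Equivalently, I argue the contrapositive in the convenient direction: if $x_0\in\mathcal{Z}_H$ then $x_0\in\mathcal{Z}_g$ or $x_0\in\mathcal{Z}_G$, where here I use that for the continuous function $g$ the zero set $\mathcal{Z}_g$ is exactly the interior of $\mathcal{N}_g$, so $x_0\notin\mathcal{Z}_g$ means $g$ does not vanish identically on any neighborhood of $x_0$.

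First I would unwind the definition of the multiplication $H=g\cdot G=\lim_{k\to\infty}\langle G,(g*\theta_k)\phi\rangle$ and of $\mathcal{Z}_H$. Suppose $x_0\in\mathcal{Z}_H$, so there is $n_1$ with $\langle H,1_{B(y,p^n)}\rangle=0$ for all $y\in B(x_0,p^{n_1})$ and all $n\le n_1$. Since $g$ is continuous, for $n$ sufficiently negative (say $n\le n_2\le n_1$) the function $g$ is nearly constant on $B(y,p^n)$, equal up to small error to $g(y)$; more precisely, using $g*\theta_k\to g$ locally uniformly on balls where $g$ is controlled, the pairing $\langle H,1_{B(y,p^n)}\rangle$ equals $\lim_k\langle G,(g*\theta_k)1_{B(y,p^n)}\rangle$, and one can replace $(g*\theta_k)1_{B(y,p^n)}$ by $g(y)\cdot 1_{B(y,p^n)}$ plus a term that is small in a sense compatible with testing against the fixed distribution $G$ restricted to that ball. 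This should give an identity of the shape $g(y)\,\langle G,1_{B(y,p^n)}\rangle = 0$ for all small enough $n$ and all $y\in B(x_0,p^{n_1})$ — this is the crux, and I would lean on the compatibility of the distributional multiplication with ordinary multiplication of locally integrable functions, plus the uniform local constancy that makes the error terms vanish in the limit, rather than an $\varepsilon$-estimate.

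Granting that identity, I finish by a dichotomy. If $g(x_0)\ne 0$, then by continuity $g(y)\ne 0$ for all $y$ in some ball $B(x_0,p^{n_3})$ with $n_3\le n_1$, so the identity forces $\langle G,1_{B(y,p^n)}\rangle=0$ for all such $y$ and all small $n$, i.e. $x_0\in\mathcal{Z}_G$. If instead $g(x_0)=0$ but $x_0\notin\mathcal{Z}_g$, then $g$ is not identically zero on any neighborhood of $x_0$; I would pick, for each large $n$, a point $y_n\in B(x_0,p^{n_1})$ with $g(y_n)\ne 0$ — possible precisely because $x_0$ is not an interior point of $\mathcal{N}_g$ — refine to a ball $B(y_n,p^n)$ on which $g$ stays nonzero (continuity), and again deduce $\langle G,1_{B(y_n,p^{n'})}\rangle=0$ for $n'\le n$. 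Combining over a sequence $n\to-\infty$ while keeping the points inside $B(x_0,p^{n_1})$, and using the tree/hierarchical structure of balls in $\Q_p$ (any ball of radius $p^n$ inside $B(x_0,p^{n_1})$ is covered by such witnesses), I would conclude that $G$ vanishes on a neighborhood of $x_0$, hence $x_0\in\mathcal{Z}_G$. The main obstacle is making the replacement $(g*\theta_k)1_{B(y,p^n)}\rightsquigarrow g(y)1_{B(y,p^n)}$ rigorous inside the limit defining $g\cdot G$; everything else is bookkeeping with the ultrametric ball structure. Since this is exactly \cite[Proposition 3.2]{FFLS}, I would also simply cite that proof and present the above as its outline.
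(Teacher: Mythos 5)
First, note that the paper does not prove this proposition at all: it is imported verbatim from \cite[Proposition 3.2]{FFLS}, so there is no in-paper argument to compare against. Judged on its own terms, your outline has a genuine gap exactly where you flag ``the crux.'' The step that replaces $(g*\theta_k)1_{B(y,p^n)}$ by $g(y)1_{B(y,p^n)}$ inside the pairing with $G$ is not justified by any continuity of $G$: a Bruhat--Schwartz distribution is continuous only along null sequences in $\mathcal{D}$, which require a \emph{fixed} parameter of constancy, whereas your error term $\bigl((g*\theta_k)-g(y)\bigr)1_{B(y,p^n)}$ has parameter of constancy $-k\to-\infty$. Smallness in sup norm therefore gives no control on $\langle G,\cdot\rangle$; for a distribution $G$ whose ``local variation at scale $p^{-k}$'' grows with $k$ (such distributions exist on $\Q_p$), the error pairing need not be small. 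Moreover, even if one could show the error tends to $0$ as $n\to-\infty$, that would only give $\langle G,1_{B(y,p^n)}\rangle\to 0$, whereas membership in $\mathcal{Z}_G$ requires exact vanishing of these pairings on all sufficiently small balls. So the identity $g(y)\langle G,1_{B(y,p^n)}\rangle=0$, on which everything else rests, is not established.

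Second, your dichotomy proves the wrong statement in its second branch. The proposition asserts $\mathcal{Z}_H\subset\mathcal{N}_g\cup\mathcal{Z}_G$; if $g(x_0)=0$ then $x_0\in\mathcal{N}_g$ and you are done immediately, so the case ``$g(x_0)=0$ but $x_0\notin\mathcal{Z}_g$'' is superfluous. Worse, the strengthened inclusion $\mathcal{Z}_H\subset\mathcal{Z}_g\cup\mathcal{Z}_G$ that this branch would establish is false: take $g(x)=|x-x_0|_p$ and $G=\delta_{x_0}$, for which $g\cdot G$ is well defined and equals $0$, so $x_0\in\mathcal{Z}_H$, yet $\mathcal{Z}_g=\emptyset$ and $x_0\notin\mathcal{Z}_G$. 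Accordingly, the covering argument you propose there cannot work: the witnesses $y_n$ with $g(y_n)\neq 0$ need not cover any neighbourhood of $x_0$, since $g$ may vanish on sets with nonempty interior clustering at $x_0$, and on those sets you obtain no information about $G$. The correct scope of the proof is only the first branch ($g(x_0)\neq 0$), and that branch still needs an honest argument for the crux step; citing \cite{FFLS} is fine, but the outline as written does not reconstruct its proof.
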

For $f\in L^1(\Q_p)$, it is evident that $\widehat{f}$ is a continuous function. Consequently, we obtain the following immediate consequence.
\begin{corollary}\label{cor:3.3}
If $f*\nu=w$ with $f\in L^1(\mathbb{Q}_p)$, then
		$\Q_p \setminus \{0\} \subset \mathcal {N}_{\widehat{f}} \cup \mathcal{Z}_{\widehat{\nu}},$
		which is equivalent to
	\begin{align}\label{mainprop}
	\{\xi\in\Q_p: \widehat{f}(\xi) \neq 0  \} \setminus \{0\} \subset \mathcal{Z}_{\widehat{\nu}}.
	\end{align}
\end{corollary}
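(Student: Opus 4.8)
The plan is to pass to the Fourier side and then invoke Proposition \ref{mainlem}. The hypothesis $f*\nu=w$ already presupposes that the convolution $f*\nu$ exists as a distribution; hence, by Proposition \ref{Conv-Mul}(3), the product $\widehat{f}\cdot\widehat{\nu}$ is well defined and $\widehat{f}\cdot\widehat{\nu}=\widehat{f*\nu}=\widehat{w}$. It then remains to identify $\widehat{w}$, the Fourier transform of the constant distribution $w$: since $\Delta_k\to 1$ in $\mathcal{D}'$ as $k\to\infty$ (for $\phi\in\mathcal{D}$ we have $\langle\Delta_k,\phi\rangle=\langle 1,\phi\rangle$ once $k$ exceeds the parameter of compactness of $\phi$) and the Fourier transform is continuous on $\mathcal{D}'$, we get $\widehat{1}=\lim_k\widehat{\Delta}_k=\lim_k\theta_k=\delta_0$ (using \eqref{FB}). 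Therefore $\widehat{f}\cdot\widehat{\nu}=w\,\delta_0$.

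Next I would apply Proposition \ref{mainlem} with $g=\widehat{f}$, which is a continuous function on $\Q_p$ because $f\in L^1(\Q_p)$, with $G=\widehat{\nu}\in\mathcal{D}'$, and with $H=g\cdot G=w\,\delta_0$, whose well-definedness was just established. The proposition yields
\[
\mathcal{Z}_{w\delta_0}\subset\mathcal{N}_{\widehat{f}}\cup\mathcal{Z}_{\widehat{\nu}}.
\]

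Then I would compute $\mathcal{Z}_{w\delta_0}$. For any ball $B(x,p^{n})$ not containing $0$ one has $\langle w\delta_0,1_{B(x,p^{n})}\rangle=w\cdot 1_{B(x,p^{n})}(0)=0$, so the distribution $w\delta_0$ vanishes on the open set $\Q_p\setminus\{0\}$; hence $\Q_p\setminus\{0\}\subset\mathcal{Z}_{w\delta_0}$ (with equality when $w\neq 0$, though only the inclusion is needed). Combining with the previous display gives $\Q_p\setminus\{0\}\subset\mathcal{N}_{\widehat{f}}\cup\mathcal{Z}_{\widehat{\nu}}$, which is the asserted inclusion. Finally, the equivalence with \eqref{mainprop} is elementary set manipulation: for $\xi\neq 0$, membership in $\mathcal{N}_{\widehat{f}}\cup\mathcal{Z}_{\widehat{\nu}}$ means $\widehat{f}(\xi)=0$ or $\xi\in\mathcal{Z}_{\widehat{\nu}}$, so demanding this for all $\xi\in\Q_p\setminus\{0\}$ is the same as demanding that every nonzero $\xi$ with $\widehat{f}(\xi)\neq 0$ lie in $\mathcal{Z}_{\widehat{\nu}}$.

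The argument is short, and the only step that deserves attention is the passage $f*\nu=w\Rightarrow\widehat{f}\cdot\widehat{\nu}=w\delta_0$, that is, checking that the distributional product on the left is genuinely well defined (this is precisely the content of Proposition \ref{Conv-Mul}(3)) and correctly identifying $\widehat{w}=w\delta_0$; once these are in place the remainder is routine bookkeeping, so I do not anticipate a real obstacle.
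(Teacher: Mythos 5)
Your proposal is correct and follows exactly the route the paper intends: pass to the Fourier side via Proposition \ref{Conv-Mul}(3) to get $\widehat{f}\cdot\widehat{\nu}=w\delta_0$, apply Proposition \ref{mainlem} with $g=\widehat{f}$ continuous, and note $\Q_p\setminus\{0\}\subset\mathcal{Z}_{w\delta_0}$. The paper states this as an immediate consequence without writing out the details; you have simply made the same argument explicit.
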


\begin{proof}[Proof of Theorem  \ref{thm-tilingperiodic}]
By Proposition \ref{zeroofE}, we have $ \mathcal{Z}_{\widehat{\nu}} \subset B(0,p^{n_v+1})$. According to Corollary \ref{cor:3.3}, ${ \rm supp }(\widehat{f}) \subset B(0,p^{n_v+1})$, which is bounded and, therefore, compact. Following Proposition \ref{Prop-compactConstant}, we conclude that $f$ is uniformly locally constancy.
\end{proof}

\section{Density of  $\nu$ and the proof of Theorem \ref{thm:card}}


We say that the discrete measure $\nu$ has a {\em bounded density} if the following limit exists for some $x_0\in \Q_p$,	
$$
D(\nu):=\lim\limits_{k\to \infty}\frac{ \nu(B(x_0,p^k))}{\m (B(x_0,p^k))},
$$
which is called {\em the density} of the discrete measure  $\nu$. Actually, if the limit exists for some $x_0 \in \Q_p$, then
it exists for all $x\in \Q_p$ and the limit is independent of $x$. In fact, for any $x_0, x_1\in \Q_p$, when $k$ is  large enough such that  $|x_0-x_1|_p<p^k$, we have $B(x_0, p^k)=B(x_1, p^k)$.

  The following theorem gets together some properties of the solution $(f, \nu)$ of the equation \eqref{EQ}, which will be proved in this section.

\begin{theorem} \label{M1} Let $f\in L^1(\mathbb{Q}_p)$ with  $\int_{\mathbb{Q}_p}fd\m>0$,
	and $\nu$ be a discrete measure defined by \eqref{eq-discretemeasure}  with locally finite support  in $\Q_p$. 
	Suppose that the equation {\rm (\ref{EQ})} is satisfied by $f$ and $\nu$ for some $w\geq 0$. Then the following statements hold.\\
	   \indent {\rm (1)} The set  $\mathcal{Z}_{\widehat{\nu}}$  is bounded  and it is the union of  the  punctured ball and some spheres with the same center $0$. \\
	\indent {\rm (2)}\ The density $D(\nu)$ exists and equals to
	$1/\int_{\Q_p} f d\m$. Furthermore,
	there exists an integer $n_f\in \mathbb{Z}$
	such that  for all integers $n \geq n_f$ we have
	       $$
	       \forall \xi \in \Q_p, \quad \nu \big(B(\xi, p^{n})\big) = p^{n} D(\nu).
	        $$

\end{theorem}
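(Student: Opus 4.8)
The plan is to establish the two parts of Theorem~\ref{M1} separately, leaning on the structural results for $\mathcal{Z}_{\widehat{\nu}}$ already proved in Proposition~\ref{zeroofE} and on the compactness of $\operatorname{supp}(\widehat{f})$ obtained in the proof of Theorem~\ref{thm-tilingperiodic}. For part~(1), Proposition~\ref{zeroofE}(1) already tells us that $\mathcal{Z}_{\widehat{\nu}}$ is a union of spheres $S(0,|\xi|_p)$ centered at $0$, and part~(2) of that proposition says it is contained in $B(0,p^{n_\nu+1})$, hence bounded. What remains is to upgrade ``a union of spheres'' to ``a punctured ball together with finitely many spheres'': the point is that if some sphere $S(0,p^m)$ lies in $\mathcal{Z}_{\widehat{\nu}}$ then so does every sphere $S(0,p^{m'})$ with $m'\le m$. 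I would prove this by combining Corollary~\ref{cor:3.3}, which gives $\{\widehat f\neq 0\}\setminus\{0\}\subset \mathcal{Z}_{\widehat\nu}$, with the observation that $\widehat f$ is continuous with $\widehat f(0)=\int f\,d\m>0$, so $\widehat f$ is nonzero on a whole neighbourhood $B(0,p^{-N})$ of $0$; thus every sufficiently small punctured ball lies in $\mathcal{Z}_{\widehat\nu}$. Since $\mathcal{Z}_{\widehat\nu}$ is rotation invariant (Lemma~\ref{lem-fourierzero}) and open, it is determined by which absolute values $p^m$ occur, and being both ``downward closed from a neighbourhood of $0$'' in the small direction and bounded above forces the shape ``punctured ball $\cup$ finitely many spheres''; a clean way to phrase this is to let $m_0$ be the largest exponent with $S(0,p^{m_0})\subset\mathcal{Z}_{\widehat\nu}$ and check directly, using \eqref{FofMu} and Lemma~\ref{SchLemma}, that all lower spheres are forced in once a neighbourhood of $0$ is.

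For part~(2), the cleanest route uses Fourier inversion at the level of distributions. From $f*\nu=w$ we get $\widehat f\cdot\widehat\nu=w\,\delta_0$ by Proposition~\ref{Conv-Mul}(3). Restricting to the ball $B(0,p^{-n})$: by \eqref{FofMu}, $\langle\widehat\nu,1_{B(0,p^{-n})}\rangle=p^{-n}\widehat{\nu_n}(0)=p^{-n}\nu(B(0,p^n))$ since $\widehat{\nu_n}(0)=\sum_{x\in E_n}\alpha_x=\nu(B(0,p^n))$. On the other hand I want to extract $\nu(B(0,p^n))$ from the functional equation. Here is the key device: take $n$ large enough that $\operatorname{supp}(\widehat f)\subset B(0,p^{n_\nu+1})\subset B(0,p^{n})$ (using Theorem~\ref{thm-tilingperiodic}'s conclusion that $\operatorname{supp}(\widehat f)$ is compact, in fact contained in $B(0,p^{n_\nu+1})$), and also large enough that $\widehat f$ is constant on $B(0,p^{-n})$ — but that constant is just $\widehat f(0)=\int f\,d\m$. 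Then test the identity $\widehat f\cdot\widehat\nu=w\delta_0$ against $1_{B(0,p^{-n})}$: the right side gives $w$, and since $\widehat f\equiv\int f\,d\m$ on the support of the test function (and $\widehat\nu$ restricted there behaves like a measure), the left side equals $(\int f\,d\m)\cdot\langle\widehat\nu,1_{B(0,p^{-n})}\rangle = (\int f\,d\m)\cdot p^{-n}\nu(B(0,p^n))$. This yields $\nu(B(0,p^n)) = p^{n} w/\int f\,d\m$ for all $n\ge n_f$, which simultaneously shows $D(\nu)=w/\int f\,d\m$ (so in particular the density exists) and gives the exact-count statement $\nu(B(\xi,p^n))=p^n D(\nu)$; the same computation with $1_{B(\xi,p^{-n})}$ in place of $1_{B(0,p^{-n})}$, noting that $\xi\mapsto$ translation only multiplies by a character $\chi(-\xi x)$ which is trivial once $|\xi|_p$ is small, handles arbitrary centers $\xi$ and removes the $w=0$ edge case (where everything is $0$).

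I expect the main obstacle to be making the multiplication-of-distributions step rigorous: $\widehat\nu$ is a genuine distribution, not a function, so ``$\widehat f\equiv\int f\,d\m$ on $B(0,p^{-n})$ hence $\widehat f\cdot\widehat\nu$ restricted there is $(\int f\,d\m)\widehat\nu$ restricted there'' needs to be justified from the definition $\langle f\cdot g,\phi\rangle=\lim_k\langle g,(f*\theta_k)\phi\rangle$. The point to verify is that for $\phi=1_{B(0,p^{-n})}$ with $n$ past the parameter of compactness of $\widehat f$ and the parameter of constancy relevant to $f$, one has $(\widehat f*\theta_k)\phi = \widehat f(0)\phi$ for all large $k$, so the limit defining $\langle\widehat f\cdot\widehat\nu,\phi\rangle$ collapses to $\widehat f(0)\langle\widehat\nu,\phi\rangle$; this is where the uniform local constancy of $\widehat f$ near $0$ and its compact support near $0$ both get used, and it should follow from the convolution estimate in Proposition~4.7.3 of \cite{AKS10} together with \eqref{FB}. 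Once that local identification is in hand, the rest is the bookkeeping above. A secondary, more routine point is assembling part~(1)'s precise ``punctured ball plus spheres'' shape from the rotation invariance and boundedness — that is essentially combinatorial once one knows a neighbourhood of $0$ is in $\mathcal{Z}_{\widehat\nu}$.
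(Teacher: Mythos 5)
Your part (1) follows the paper's route exactly (Proposition \ref{zeroofE} for ``bounded union of spheres'', Corollary \ref{cor:3.3} plus continuity of $\widehat f$ at $0$ for the punctured ball $B(0,p^{-n_f})\setminus\{0\}$), except that your intermediate claim ``if $S(0,p^m)\subset\mathcal{Z}_{\widehat\nu}$ then $S(0,p^{m'})\subset\mathcal{Z}_{\widehat\nu}$ for all $m'\le m$'' is neither proved nor needed: the theorem only asserts a punctured ball together with \emph{some} spheres, and that already follows from the three ingredients you cite. For part (2) you take a genuinely different route from the paper, and it has a real gap.

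The gap is in the ``Furthermore'' clause: the exact count $\nu\bigl(B(\xi,p^{n})\bigr)=p^{n}D(\nu)$ for \emph{every} center $\xi\in\Q_p$ and every $n\ge n_f$. Your duality computation, once repaired, only gives $\nu\bigl(B(0,p^{n})\bigr)=p^{n}w/\!\int f\,d\m$, i.e.\ balls centered at $0$ (equivalently at any $\xi$ with $|\xi|_p\le p^n$). For a ball $B(\xi_0,p^{n_f})$ with $|\xi_0|_p$ large this says nothing, and your proposed fix --- testing against $1_{B(\xi,p^{-n})}$ --- does not produce a ball count: by \eqref{FofMu}, $\langle\widehat\nu,1_{B(\xi,p^{-n})}\rangle=p^{-n}\widehat{\nu_n}(\xi)$ is an exponential sum $\sum_{x\in E_n}\alpha_x\overline{\chi(\xi x)}$, not $\nu(B(\xi',p^n))$ for any $\xi'$; you are conflating translation on the frequency side with translation on the physical side (a spatial translate corresponds to a \emph{modulated} indicator $\chi(-\xi_0\cdot)p^{n}1_{B(0,p^{-n})}$, via \eqref{FB}). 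The paper's Proposition \ref{Structure} supplies exactly this missing step: from the vanishing $\sum_{x\in E_k}\alpha_x\chi(\eta x)=0$ for $\eta=p^{k-1},p^{k-2},\dots,p^{n_f}$ it runs an induction with Lemma \ref{SchLemma}, equalizing the $\nu$-measures of the $p$ sub-balls at each scale, to conclude that all balls of radius $p^{n_f}$ inside $B(0,p^k)$ carry the same $\nu$-mass. Some version of that equidistribution argument is unavoidable and is absent from your proposal. A secondary flaw: you justify $(\widehat f*\theta_k)\phi=\widehat f(0)\phi$ by ``uniform local constancy of $\widehat f$'', but $\widehat f$ is only locally constant when $f$ has compact support, which is not assumed; what saves the computation is that $\widehat f*\theta_k$ is automatically locally constant, that it tends to $\widehat f(0)$ at $0$, and that the discrepancy $(\widehat f*\theta_k-(\widehat f*\theta_k)(0))1_{B(0,p^{-n})}$ is supported in the punctured ball where $\widehat\nu$ vanishes. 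Note also that the paper obtains $D(\nu)=w/\!\int f\,d\m$ by a direct real-variable argument (integrating \eqref{EQ} over $B(0,p^n)$ and controlling the tail with the already-established equidistribution), which avoids the distributional multiplication entirely; your Fourier-side derivation of the density at center $0$ is a legitimate alternative once the two issues above are addressed, but it cannot replace Proposition \ref{Structure}, on which even the paper's density proof relies.
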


Theorem \ref{M1} (1) and  will be proved in $\S 4.1$, the distribution of $\nu$ will be discussed in $\S 4.2$ and the equality
$D(\nu)=w/ \int_{\Q_p} f d \m$ will be proved in $\S 4.3$.

\subsection{ Structure of  $\mathcal{Z}_{\widehat{\nu}}$}

Our discussion is based on the functional equation $\widehat{f}\cdot\widehat{\nu}= w \cdot \delta_0$, which is implied by
 $f*\mu_\nu=w$ (see Proposition \ref{Conv-Mul}).

Notice that $\widehat{f}(0)=\int_{\Q_p} f d \m>0$ and  that $\widehat{f}$ is a continuous function.
It follows that  there exists a small ball  where $\widehat{f}$ is nonvanishing.
Let
\begin{align} \label{nf}
	n_{f}:=\min \{n\in\mathbb{Z}: \widehat{f}(x)\neq 0,\text{ if }  x\in B(0,p^{-n}) \}.
\end{align}

\begin{proposition}\label{prop:4.2} Let $f\in L^1(\mathbb{Q}_p)$ with  $\int_{\mathbb{Q}_p}fd\m>0$,
	and $\nu$ be a discrete measure defined by \eqref{eq-discretemeasure}  with locally  a finite support  in $\Q_p$.  Then,
\begin{equation} \label{eq:4.2}
 B(0,p^{-n_f})\setminus \{0\}\subset \mathcal{Z}_{\widehat{\nu}}.
\end{equation}
\end{proposition}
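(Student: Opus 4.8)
The plan is to exploit the functional identity $\widehat{f}\cdot\widehat{\nu}=w\cdot\delta_0$ together with Proposition \ref{mainlem} (the support-of-a-product estimate), much as in Corollary \ref{cor:3.3}, but now localized to the small ball $B(0,p^{-n_f})$ where $\widehat{f}$ is known to be nonvanishing. First I would recall that $f*\nu=w$ implies, via Proposition \ref{Conv-Mul}(3), that $\widehat{f}\cdot\widehat{\nu}$ is well defined and equals $w\cdot\delta_0$. Set $g=\widehat{f}$, which is a continuous function since $f\in L^1(\Q_p)$, and set $G=\widehat{\nu}\in\mathcal{D}'$. Then $H:=g\cdot G=w\,\delta_0$, and by Proposition \ref{mainlem} we get $\mathcal{Z}_{H}\subset\mathcal{N}_{g}\cup\mathcal{Z}_{G}$.

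Next I would identify the two sides explicitly. Since $H=w\,\delta_0$, its zero set $\mathcal{Z}_H$ is exactly $\Q_p\setminus\{0\}$ (the support of $\delta_0$ is $\{0\}$). On the other hand, by the very definition of $n_f$ in \eqref{nf}, we have $\widehat{f}(x)=g(x)\neq 0$ for every $x\in B(0,p^{-n_f})$, so $\mathcal{N}_g\cap B(0,p^{-n_f})=\emptyset$. Intersecting the inclusion $\Q_p\setminus\{0\}\subset\mathcal{N}_g\cup\mathcal{Z}_{\widehat{\nu}}$ with the open ball $B(0,p^{-n_f})$ therefore yields
\begin{equation*}
B(0,p^{-n_f})\setminus\{0\}\subset\bigl(\mathcal{N}_g\cup\mathcal{Z}_{\widehat{\nu}}\bigr)\cap B(0,p^{-n_f})=\mathcal{Z}_{\widehat{\nu}}\cap B(0,p^{-n_f})\subset\mathcal{Z}_{\widehat{\nu}},
\end{equation*}
which is precisely \eqref{eq:4.2}.

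The only genuinely delicate point is bookkeeping about what "zero of a distribution" means: I must make sure that $\mathcal{Z}_{w\delta_0}=\Q_p\setminus\{0\}$ is used with the correct definition from Section \ref{subsec2.4} (the maximal open set on which the distribution vanishes), and that the intersection-with-an-open-ball step is legitimate — here it is, because $\mathcal{Z}_{\widehat{\nu}}$ is itself defined as an open set and $\mathcal{N}_g$ is relatively closed with empty trace on the ball, so no point of $B(0,p^{-n_f})\setminus\{0\}$ can be "absorbed" into $\mathcal{N}_g$. A minor subtlety is that Proposition \ref{mainlem} is quoted for a continuous function $g$ times a distribution $G$ with $g\cdot G$ well defined; all three hypotheses hold here ($\widehat{f}$ continuous, $\widehat{\nu}$ a distribution, product well defined by Proposition \ref{Conv-Mul}(3)), so the application is immediate and the proof is short.
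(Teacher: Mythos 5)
Your proposal is correct and follows essentially the same route as the paper: the paper simply cites the inclusion \eqref{mainprop} from Corollary \ref{cor:3.3} and combines it with the definition \eqref{nf} of $n_f$, whereas you re-derive that inclusion inline from $\widehat{f}\cdot\widehat{\nu}=w\,\delta_0$ and Proposition \ref{mainlem} before intersecting with $B(0,p^{-n_f})$. The extra care about $\mathcal{Z}_{w\delta_0}$ and the openness of the sets involved is sound but not a departure from the paper's argument.
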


\begin{proof}   Actually,  \eqref{eq:4.2} is an
	immediately consequence of  \eqref{mainprop}.
\end{proof}

By Proposition \ref{zeroofE} and \ref{prop:4.2},  the set  $\mathcal{Z}_{\widehat{\nu}}$  is bounded  and it is the union of  the  punctured ball $B(0,p^{-n_f})$ and some spheres with the same center $0$.

\subsection{Distribution of $\nu$}

 The discrete measure $\nu$ involved in the equation (\ref{EQ}) shares the following uniform distribution property.

\begin{proposition}\label{Structure}
The measure $\nu$ of the ball $B(\xi, p^{n_f})$ is independent of $\xi \in \mathbb{Q}_p$. Consequently, the measure  $\nu$ admits a bounded density $D(\nu)$. Moreover, for all integers $n \geq n_f$, we have
\begin{equation}
    \label{numberE}
    \forall \xi \in \mathbb{Q}_p, \quad \nu\big(B(\xi, p^{n})\big) = p^n D(\nu).
\end{equation}

\end{proposition}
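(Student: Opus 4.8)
\textbf{Proof proposal for Proposition \ref{Structure}.}

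The plan is to exploit the functional equation $\widehat{f}\cdot\widehat{\nu}=w\delta_0$ together with the structure of $\mathcal{Z}_{\widehat{\nu}}$ already established in Proposition \ref{prop:4.2}, namely that $B(0,p^{-n_f})\setminus\{0\}\subset\mathcal{Z}_{\widehat{\nu}}$. The key observation is that the quantity $\nu\big(B(\xi,p^{n})\big)$ for $n\geq n_f$ can be read off from $\widehat{\nu}$ evaluated against the indicator $1_{B(0,p^{-n})}$, because by \eqref{FB} and the computation \eqref{FofMu}, for $\xi$ chosen in a fixed ball we have a clean relation between $\langle\widehat{\nu},1_{B(\eta,p^{-n})}\rangle$ and the restricted exponential sums $\widehat{\nu_m}(\eta)$. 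I would first translate the claim into the language of the Fourier transform: the statement that $\nu\big(B(\xi,p^{n_f})\big)$ is independent of $\xi$ is equivalent, after summing a translated indicator, to saying that a certain distributional evaluation of $\widehat{\nu}$ picks up only the contribution of the atom at $0$ of $\widehat{\nu}$, i.e. the term $w\delta_0$ in $\widehat{f}\cdot\widehat{\nu}$.

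Concretely, the first step is to pair the functional equation $\widehat{f}\cdot\widehat{\nu}=w\delta_0$ with the test function $1_{B(0,p^{-n})}$ for $n\geq n_f$. On the right-hand side this gives $w$. On the left-hand side, since $\widehat{f}$ is continuous and nonvanishing on $B(0,p^{-n_f})$ and the product is well defined, I would use Proposition \ref{mainlem} (or rather its proof mechanism) to localize: away from $0$, $\widehat{\nu}$ vanishes on $B(0,p^{-n_f})\setminus\{0\}$, so the only contribution to $\langle\widehat{f}\cdot\widehat{\nu},1_{B(0,p^{-n})}\rangle$ comes from an arbitrarily small neighborhood of $0$, where $\widehat{f}$ is essentially the constant $\widehat{f}(0)=\int f\,d\m$. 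Carrying this out carefully, using the definition of multiplication of distributions via $f*\theta_k$, should yield
\[
\widehat{f}(0)\cdot\lim_{m\to\infty}\langle\widehat{\nu},1_{B(0,p^{-n})}\rangle = w,
\]
where the pairing stabilizes. Then \eqref{FofMu} converts $\langle\widehat{\nu},1_{B(0,p^{-n})}\rangle = p^{-n}\widehat{\nu_n}(0)=p^{-n}\nu(B(0,p^n))$, giving $\nu(B(0,p^n))=p^n w/\int f\,d\m$ for all $n\geq n_f$, which both establishes that the density exists and computes it; the case $n=n_f$ gives the base case of \eqref{numberE} at $\xi=0$.

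For a general center $\xi$, I would run the same pairing but against the translated indicator $1_{B(\xi,p^{-n})}$ — wait, more precisely, to control $\nu(B(\xi,p^n))$ one uses $\langle\widehat{\nu},\chi(\xi\,\cdot)1_{B(0,p^{-n})}\rangle$-type test functions, or equivalently one translates: $\nu(B(\xi,p^n)) = \langle\nu,1_{B(\xi,p^n)}\rangle = \langle\widehat{\nu},\widehat{1_{B(\xi,p^n)}}\rangle$ and by \eqref{FB} this equals $p^n\langle\widehat{\nu},\chi(-\xi\,\cdot)1_{B(0,p^{-n})}\rangle$. Now $\chi(-\xi\,\cdot)$ is locally constant, and on $B(0,p^{-n})$ with $n\geq n_f$ the support considerations force this evaluation to again see only the atom behavior of $\widehat{\nu}$ near $0$, where $\chi(-\xi\cdot 0)=1$; hence the $\xi$-dependence drops out and we recover the same value $p^n D(\nu)$. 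The main obstacle I anticipate is making the localization argument rigorous at the level of distributions: one must justify that the product $\widehat{f}\cdot\widehat{\nu}$, when paired with these indicators, genuinely reduces to $\widehat{f}(0)$ times the point-mass contribution — this requires unwinding the definition of distributional multiplication through $f*\theta_k$ and using that $\widehat f$ is constant equal to $\widehat f(0)$ on the shrinking balls $B(0,p^{-k})$ for $k$ large, combined with $\widehat{\nu}$ vanishing on the punctured ball. Once that localization is in hand, the identification via \eqref{FofMu} and \eqref{FB} is routine bookkeeping.
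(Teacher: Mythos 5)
Your proposal is correct, but it follows a genuinely different route from the paper's. The paper likewise starts from Proposition \ref{prop:4.2}, but then converts the vanishing $\langle\widehat{\nu},1_{B(\eta,p^{-k})}\rangle=0$ for the specific points $\eta=p^{k-1},p^{k-2},\dots,p^{n_f}$ into vanishing sums of $p^{j}$-th roots of unity whose coefficients are the $\nu$-measures of sub-balls, and then applies Schoenberg's Lemma \ref{SchLemma} inductively to equalize the measures of all balls of radius $p^{n_f}$ inside $B(0,p^{k})$; the value $D(\nu)=w/\int_{\Q_p}f\,d\m$ is obtained separately in Proposition \ref{fisrtprop} by integrating the convolution equation over large balls. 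Your route stays entirely at the level of distributional supports: since $1_{B(0,p^{-n})}-1_{B(0,p^{-m})}$ and $\chi(-\xi\,\cdot)1_{B(0,p^{-n})}-1_{B(0,p^{-m})}$ (for $m\geq n\geq n_f$ with $p^{-m}\leq |\xi|_p^{-1}$) are test functions supported in the punctured ball $B(0,p^{-n_f})\setminus\{0\}\subset\mathcal{Z}_{\widehat{\nu}}$, the pairings $\langle\widehat{\nu},\cdot\rangle$ stabilize and the $\xi$-dependence drops out, with no appeal to roots of unity. Moreover, the step you flag as the main obstacle is in fact exact rather than approximate: $\widehat{f}*\theta_k$ is constant on $B(0,p^{-k})$, so $(\widehat{f}*\theta_k)1_{B(0,p^{-n})}-(\widehat{f}*\theta_k)(0)\,1_{B(0,p^{-k})}$ is genuinely supported in the punctured ball, and the limit defining $\langle\widehat{f}\cdot\widehat{\nu},1_{B(0,p^{-n})}\rangle$ collapses to $\widehat{f}(0)\cdot\lim_k p^{-k}\nu(B(0,p^{k}))=w$. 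The trade-off: the paper's argument reuses the root-of-unity machinery that drives the rest of the paper and yields the intermediate equidistribution \eqref{ud1} at every scale between $p^{n_f}$ and $p^{k}$, whereas yours is shorter and delivers the density value in the same stroke, effectively merging Propositions \ref{Structure} and \ref{fisrtprop}.
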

\begin{proof}

%
	
For any given $\xi \in \Q_p$, let $k=-v_p(\xi)$. If $k\leq n_f$, then $B(0,p^{n_f})=B(\xi,p^{n_f})$. So obviously $\nu\big(B(0,p^{n_f})\big)= \nu\big(B(\xi,p^{n_f})\big)$.

Now we suppose $k> n_f$. Then, consider any  $\eta$ satisfying $$B(\eta,p^{-k})\subset B(0,p^{-n_f})\setminus \{0\}.$$
By \eqref{eq:4.2} in Proposition \ref{prop:4.2}, we have   $\langle \widehat{\nu}, 1_{B(\eta,p^{-k})} \rangle =0$, which
by (\ref{FofMu}), is equivalent to
\begin{equation}\label{eq:4.4}
\sum_{x\in E_k} \alpha_x \cdot \chi(\eta x)=0,
\end{equation}
where $E_k= {\rm supp} \nu  \cap B(0,p^k).$
Taking  $\eta= p^{k-1}$ in \eqref{eq:4.4}, we have
$
\sum_{x\in E_k} \alpha_x \cdot{\chi(p^{k-1}x)}=0.
$
Observe that  $$B(0,p^k)= \bigsqcup_{i=0}^{p-1} B(ip^{-k},p^{k-1})$$ and that the function  $\chi(p^{k-1}\cdot)$ is constant  on each ball of radius $p^{k-1}$.
So we have
$$
0=\sum_{x\in E_k}\alpha_x  \cdot {\chi(p^{k-1}x)}=\sum_{i=0}^{p-1} {\chi\Big(\frac{i}{p}\Big)}\nu\big(B(ip^{-k},p^{k-1})\big).
$$
Applying
Lemma \ref{SchLemma}, we obtain
\begin{equation}
\label{ud1}
\nu \big(B(ip^{-k},p^{k-1})\big) =\nu\big(B(jp^{-k},p^{k-1})\big),
\quad \forall \ 0\le i,  j \le p-1.
\end{equation}

 Similarly,   taking $\eta=p^{k-2}$ in  (\ref{eq:4.4}), we have

$$
0=\sum_{0\le i, j \le p-1}{\chi\Big(\frac{i}{p^2}+\frac{j}{p}\Big)}  \nu \big(B(\frac{i}{p^{k}}+\frac{j}{p^{k-1}}, p^{k-2})\big).
$$
Again, Lemma \ref{SchLemma} implies
\[ \nu\big(B(\frac{i}{p^{k}}+\frac{j}{p^{k-1}}, p^{k-2}) \big)=\nu\big(B(\frac{i}{p^{k}}+\frac{m}{p^{k-1}}, p^{k-2})\big),  \quad \forall \ 0\le i,  j,m \le p-1. \]

Since $$\sum_{j=0}^{p-1} \nu \big(B(\frac{i}{p^{k}}+\frac{j}{p^{k-1}}, p^{k-2})\big)= \nu\big(B(\frac{i}{p^k}, p^{k-1})\big),$$ by (\ref{ud1}), we get
$$
\nu\big(B(\frac{i}{p^{k}}+\frac{j}{p^{k-1}}, p^{k-2}) \big)=\nu\big(B(\frac{l}{p^{k}}+\frac{m}{p^{k-1}}, p^{k-2})\big) \quad   \forall\  0\le i, j, l,m \le p-1.
$$

We continue these arguments    for all $\eta= p^{k-1}, \dotsc, p^{n_f}$.
By induction, we have
\begin{align}\label{equalnumber}
\nu\big(  B(\xi_1, p^{n_f})\big)=\nu\big(  B(\xi_2, p^{n_f})\big), \quad  \forall \xi_1,\xi_2 \in D(0,p^k).
\end{align}
Thus  by (\ref{equalnumber}),
$$ \nu \big(B(\xi, p^{n_f})\big)=\nu \big( B(0, p^{n_f} \big).$$

The formula (\ref{numberE}) follows immediately because
each ball of radius  $p^n$ with $ n\ge n_f$ is a disjoint union of  $p^{n-n_f}$ balls of radius $p^{n_f}$ so that
$$
   \nu \big( B(0,p^n)\big) = p^{n-n_f}  \cdot \nu \big( B(0,p^{n_f})\big) .
$$
\end{proof}

\subsection {Equality $D(\nu)=w/ \int_{\Q_p} f d\m$}
\begin{proposition}\label{fisrtprop}
The  density $D(\nu)$ of $\nu$ satisfies
 $$D(\nu)=\frac{w}{\int_{\Q_p}f(x)dx}.
 $$
\end{proposition}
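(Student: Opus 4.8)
The plan is to extract the density $D(\nu)$ by integrating the tiling relation $f*\nu=w$ against the normalized indicator of a large ball and passing to the limit. First I would fix a large integer $n\geq n_f$ and integrate the identity $\sum_{x}\alpha_x f(y-x)=w$ over $y\in B(0,p^n)$. On the left, by Fubini's theorem (justified since the convergence is absolute and $f\in L^1$), this becomes $\sum_{x\in E}\alpha_x\int_{B(0,p^n)}f(y-x)\,dy=\sum_{x\in E}\alpha_x\int_{B(-x,p^n)}f(z)\,dz$, while the right-hand side contributes $w\cdot p^n$. The key point is that for $x\in B(0,p^n)$ we have $B(-x,p^n)=B(0,p^n)$ by the ultrametric inequality, so those terms each contribute $\int_{B(0,p^n)}f$, and there are $\nu(B(0,p^n))=p^nD(\nu)$ of them (counted with multiplicity), using Proposition \ref{Structure}.

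Next I would control the error from the terms with $x\notin B(0,p^n)$, i.e. $|x|_p>p^n$. For such $x$, $B(-x,p^n)$ is disjoint from, but still ``far'' from $0$; the contribution is $\sum_{|x|_p>p^n}\alpha_x\int_{B(-x,p^n)}f$. Since $\{\alpha_x\}$ is bounded, say $|\alpha_x|\leq M$, and the balls $B(-x,p^n)$ for $x$ in distinct cosets of $B(0,p^n)$ are pairwise disjoint while each coset of $B(0,p^n)$ contains exactly $\nu(B(\cdot,p^n))/\,$(nothing uniform is needed here, just local finiteness) — more simply, grouping $x$ by the coset $c+B(0,p^n)$ they lie in, the total error is bounded by $M\sum_{c}\#(E\cap(c+B(0,p^n)))\cdot\big|\int_{c+B(0,p^n)}f\big|$, and since distinct cosets are disjoint and $\#(E\cap\text{coset})$ is uniformly bounded by $p^nD(\nu)+$const (indeed exactly $p^nD(\nu)$ by Proposition \ref{Structure}), this is at most $M(p^nD(\nu))\int_{\Q_p\setminus B(0,p^n)}|f|\to 0$ after dividing by $p^n$, because $f\in L^1$ forces $\int_{\Q_p\setminus B(0,p^n)}|f|\to 0$.

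Putting these together: dividing the integrated identity by $p^n=\m(B(0,p^n))$ gives
\begin{align}
\frac{\nu(B(0,p^n))}{p^n}\int_{B(0,p^n)}f\,d\m \;+\; \varepsilon_n \;=\; w,
\end{align}
where $\varepsilon_n\to 0$ and $\frac{\nu(B(0,p^n))}{p^n}=D(\nu)$ for $n\geq n_f$ by Proposition \ref{Structure}. Letting $n\to\infty$, $\int_{B(0,p^n)}f\,d\m\to\int_{\Q_p}f\,d\m>0$ by dominated convergence, so we obtain $D(\nu)\int_{\Q_p}f\,d\m=w$, which is the desired equality $D(\nu)=w/\int_{\Q_p}f\,d\m$ after dividing by the positive quantity $\int_{\Q_p}f\,d\m$.

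The main obstacle is the bookkeeping for the error term: one must be careful that the terms with $|x|_p>p^n$, although individually small, are numerous, so a crude bound $M\sum|\int|$ is not obviously summable. The resolution is to organize the sum by cosets of $B(0,p^n)$ — distinct cosets give disjoint balls of integration, the number of points of $E$ in each coset is uniformly bounded (here exactly $p^nD(\nu)$ thanks to Proposition \ref{Structure}), so the whole error, before normalization, is bounded by $M\cdot p^nD(\nu)\cdot\int_{\Q_p\setminus B(0,p^n)}|f|\,d\m$; dividing by $p^n$ leaves $M D(\nu)\int_{\Q_p\setminus B(0,p^n)}|f|\,d\m$, which tends to $0$. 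Everything else is a routine application of Fubini and dominated convergence on the locally compact group $\Q_p$.
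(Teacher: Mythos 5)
Your overall strategy is the same as the paper's: integrate $f*\nu=w$ over $B(0,p^n)$, split the resulting sum according to whether the translate lies in $B(0,p^n)$, use Proposition \ref{Structure} to evaluate the main term and to control the tail, then divide by $\m\bigl(B(0,p^n)\bigr)$ and let $n\to\infty$. The main term and the limiting argument are fine.

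There is, however, a genuine flaw in your treatment of the tail. You bound the contribution of the points $x\in E$ with $|x|_p>p^n$ by $M\sum_c\#\bigl(E\cap(c+B(0,p^n))\bigr)\cdot\bigl|\int_{c+B(0,p^n)}f\bigr|$ and then assert that $\#\bigl(E\cap(c+B(0,p^n))\bigr)$ equals $p^nD(\nu)$ by Proposition \ref{Structure}. That proposition controls the \emph{signed} measure $\nu\bigl(B(\xi,p^n)\bigr)=\sum_{x\in E\cap B(\xi,p^n)}\alpha_x$, not the cardinality of $E\cap B(\xi,p^n)$. Since the weights $\alpha_x$ are integers that may be negative, a fixed signed sum is compatible with arbitrarily many points per coset (positive and negative weights cancelling), and local finiteness gives no uniform bound on the counts; so your estimate of the tail is unjustified in the stated generality (it is valid only when all $\alpha_x>0$, e.g.\ for $\nu=\sum_t\delta_t$). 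The repair is exactly what the paper does: for all $x$ in a fixed coset $c+B(0,p^n)$ the integral $\int_{B(-x,p^n)}f$ is the \emph{same} number, so the sum over that coset factors as $\nu\bigl(c+B(0,p^n)\bigr)\cdot\int_{-c+B(0,p^n)}f=D(\nu)\,p^n\int_{-c+B(0,p^n)}f$, and summing over the cosets covering $\Q_p\setminus B(0,p^n)$ gives the tail contribution \emph{exactly} as $D(\nu)\,p^n\int_{\Q_p\setminus B(0,p^n)}f\,d\m$, which tends to $0$ after dividing by $p^n$ because $f\in L^1$. With this replacement (no absolute values, no counting) your argument coincides with the paper's proof.
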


\begin{proof}
By the integrability of $f$, the quantity
$$
\epsilon_n := \int_{\Q_p\setminus B(0, p^n)} f(x) dx
$$
tends to zero  as  the integer $n\to \infty$.
	Integrating the equality (\ref{EQ}) over the ball $B(0, p^n)$, we have
\begin{eqnarray}\label{ee}
  w \cdot  \m (B(0,p^n))
		=\sum_{\lambda \in E} \int_{B(0,p^n)}\alpha_{\lambda}\cdot  f(x-\lambda)dx.
\end{eqnarray}
Now we split the sum in (\ref{ee}) into two parts, according to $\lambda \in E\cap B(0, p^n)$ or $\lambda \in E\setminus B(0, p^n)$.  Denote $I:=\int_{\Q_p} f d\m$.
For $\lambda \in B(0, p^n)$,  we have
\begin{eqnarray*}
\int_{B(0,p^n)} f(x-\lambda)dx =	\int_{B(0,p^n)} f(x)dx
	= I - \epsilon_n.
\end{eqnarray*}
It follows that
\begin{equation}\label{ee1}
\sum_{\lambda \in E\cap B(0, p^n)}	\int_{B(0,p^n)} \alpha_{\lambda} \cdot f(x-\lambda)dx
=   \nu\big(  B(0, p^n)\big)\cdot (I-\epsilon_n).
\end{equation}

Notice that
\begin{equation}\label{ee2}
    \int_{B(0,p^n)} f(x-\lambda)dx
    = \int_{B(-\lambda,p^n)} f(x)dx.
\end{equation}
For $\lambda \in E\setminus B(0, p^n)$, the ball  $B(-\lambda,p^n)$ is contained in $\Q_p\setminus B(0, p^n)$.
We partition the support  $\Q_p\setminus B(0, p^n)$  into $B_j$'s such that each $B_j$ is  a ball
of radius $p^n$. Thus the integrals in (\ref{ee2}) for the $\lambda$'s in the same $B_j$ are equal.
 Let $\lambda_j$ be a representative of $P_j$. Then we have
\begin{eqnarray}
\sum_{\lambda \in E\setminus B(0,p^n)} \int_{B(0,p^n)}  \alpha_{\lambda} \cdot f(x-\lambda)dx
 =  \sum_j \nu(B_j) \cdot \int_{B(-\lambda_j,p^n)} f(x)dx. \nonumber
 \end{eqnarray}
 However, by (\ref{numberE}) in Proposition \ref{Structure},  $\nu (B_j) = D(\nu) \cdot \m\big(B(0, p^n)\big)$  if $n \geq n_f$. Thus, for each  integer $n\geq n_f$,
 \begin{align}
 \sum_{\lambda\in E\setminus B(0,p^n)} \int_{B(0,p^n)}\alpha_{\lambda} \cdot  f(x-\lambda)dx
       & =  D(\nu) \cdot \m\big(B(0, p^n)\big) \sum_j \int_{B_j} f(x)dx  \nonumber \\
       & =  D(\nu)\cdot  \m\big(B(0, p^n)\big) \int_{\Q_p \setminus B(0, p^n)} f(x) dx  \nonumber  \\
       &=   D(\nu)\cdot  \m\big(B(0, p^n)\big)\cdot \epsilon_n. \label{ee3}
\end{align}

Thus from (\ref{ee}), (\ref{ee1}) and (\ref{ee3}),  we finally get
	$$
	\left| w\cdot \m\big(B(0, p^n)\big) - \nu\big( B(0, p^n)\big)\cdot I \right|
      \le 2 D(\nu)  \cdot \m(B(0, p^n))\cdot \epsilon_n.
	$$
We conclude by dividing $\m\big(B(0, p^n)\big)$ and then letting $n\to \infty$.
\end{proof}

 \subsection{Proof of Theorem \ref{thm:card} }
 \begin{proof}[Proof of Theorem \ref{thm:card}]
 Consider the convolution equation $f*\nu=1$, where $f\in L^{1}(\Q_p)$ is a non-negative,  $\nu=\sum_{t\in T}\delta_t$ and $T$ is a discrete subset in $\Q_p$.
 By Proposition \ref{Structure},    for  integers $n \geq n_f$, we have
 \[ \#(B(x,p^n)\cap T) =\#(B(y,p^n)\cap T), \quad  \forall x, y \in \mathbb{Q}_p.\]

 On the other hand,  if   \[ \#(B(x,p^n)\cap T) = \#(B(y,p^n)\cap T), \quad  \forall x, y \in \mathbb{Q}_p.\]  for some $n\geq 0$. Take \[f=\frac{1_{B(0,p^n)}}{\#(B(x,p^n)\cap T)}. \]
 Then $f*\nu=1$ with $\nu=\sum_{t\in T}\delta_t$.
 \end{proof}

\section{The structure of tiles on $\mathbbm {\Z}/ p^{n}q\mathbbm {\Z}$ and $\Z/p^n\Z\times\Z/p\Z$}
In this section, we primarily utilize the results established in \cite{FFS} and \cite{FKL} to characterize the structure of tiles on
$\mathbbm {\Z}/ p^{n}q\mathbbm {\Z}$. The structural properties of tiles in $\mathbb{Z}/p^n\mathbb{Z}$ are characterized by $p$-homogeneity, as demonstrated in \cite{FFS}.

\subsection{ $p$-homogeneity  of tiles in $\Z/p^n\Z$}

Let $nn$ be a positive integer. To any finite sequence $t_0 t_1 \cdots t_{n-1} \in \{0, 1, \dots, p-1\}^{n}$, we associate the integer
\[
c = c(t_0 t_1 \cdots t_{n-1}) = \sum_{i=0}^{n-1} t_i p^i \in \{0, 1, \dots, p^{n} - 1\}.
\]
This establishes a bijection between ${\Z}/p^{n}{\Z}$ and $\{0, 1, \dots, p-1\}^{n}$, which we consider as a finite tree, denoted by ${\mathcal T}^{(n)}$ (see Figure~\ref{fig:1}).

The set of vertices of ${\mathcal T}^{(n)}$ is the disjoint union of the sets ${\Z}/p^\gamma{\Z}$, for $0 \le \gamma \le n$. Each vertex, except the root, is identified with a sequence $t_0 t_1 \cdots t_{\gamma-1}$ in ${\Z}/p^\gamma{\Z}$, where $0 \le \gamma \le n$ and $t_i \in \{0, 1, \dots, p-1\}$. The set of edges consists of pairs $(x, y) \in {\Z}/p^\gamma{\Z} \times {\Z}/p^{\gamma+1}{\Z}$ such that $x \equiv y \pmod{p^n}$, where $0 \le \gamma \le n-1$.
Each point $c$ of ${\Z}/p^n{\Z}$ is identified with the boundary point $\sum_{i=0}^{n-1} t_i p^i \in \{0, 1, \dots, p^{n} - 1\}$ of the tree.

Each subset $C \subset {\Z}/p^n{\Z}$ determines a subtree of ${\mathcal T}^{(n)}$, denoted by ${\mathcal T}_C$, which consists of the paths from the root to the boundary points in $C$. 

For each $0 \le \gamma \le n$, we denote by
\[
C_{\bmod p^\gamma} := \{x \in \{0, 1, \dots, p^\gamma-1\} : \exists y \in C \text{ such that } x \equiv y \pmod{p^\gamma}\}
\]
the subset of $C$ modulo $p^\gamma$.

The set of vertices of ${\mathcal T}_C$ is the disjoint union of the sets $C_{\bmod p^\gamma}$, for $0 \le \gamma \le n$. The set of edges consists of pairs $(x, y) \in C_{\bmod p^\gamma} \times C_{\bmod p^{\gamma+1}}$ such that $x \equiv y \pmod{p^\gamma}$, where $0 \le \gamma \le  n-1$.

For vertices $u \in C_{\bmod p^{\gamma+1}}$ and $s \in C_{\bmod p^\gamma}$, we call $s$ the  \emph{parent} of $u$ or $u$ the \emph{descendant} of $s$ if there exists an edge between $s$ and $u$.

Now, we proceed to construct a class of subtrees of ${\mathcal T}^{(n)}$. Let $I$ be a subset of $\{0, 1, \dots, n-1\}$, and let $J$ be its complement. Thus, $I$ and $J$ form a partition of $\{0, 1, \dots, n-1\}$, and either set may be empty.

We say a subtree ${\mathcal T}_C$ of ${\mathcal T}^{(n)}$ is of ${\mathcal T}_{I}$-form if its vertices satisfy the following conditions:

\begin{enumerate}
    \item If $i \in I$ and  $t_0 t_1\dots t_{i-1} $ is given, then $t_i$ can take any value in $\{0, 1, \dots, p-1\}$. In other words, every vertex in $C_{\bmod p^{i}}$ has $p$ descendants.
    \item If $i \in J$ and $t_0 t_1 \dots t_{i-1}$ is given, we fix a value in $\{0, 1, \dots, p-1\}$ that $t_i$ must take. That is, $t_i$ takes only one value from $\{0, 1, \dots, p-1\}$, which depends on $t_0 t_1 \dots t_{i-1}$. In other words, every vertex in $C_{\bmod p^{i}}$ has one descendant.
\end{enumerate}
  \begin{figure}
  	\centering
  	\includegraphics[width=0.7\linewidth]{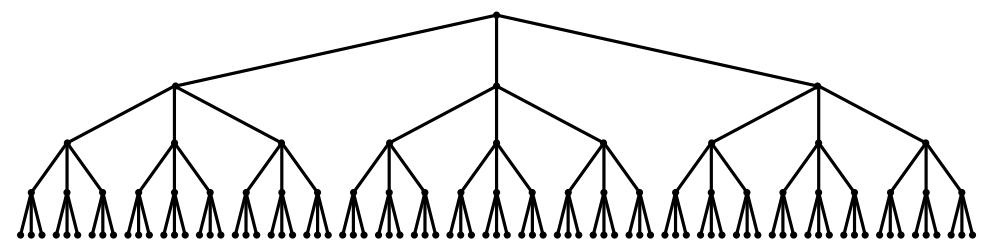}
  	\caption{The set ${\Z}/3^4{\Z}=\{0,1,2,\cdots,80\}$ is considered as a tree ${\mathcal T}^{\left(4\right)} $. }
  	\label{fig:1}
  \end{figure}
  \begin{figure}
  	\centering
  	\includegraphics[width=0.7\linewidth]{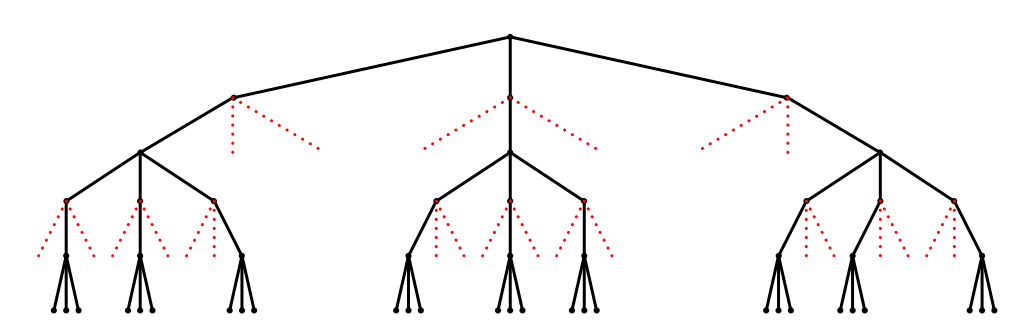}
  	\caption{For $p=3$, a ${{\mathcal T}_{I,J }}$-form tree with $n=5$, $I=\{0,2,4\} ,J=\{1,3\}$.}
  	\label{fig:2}
  \end{figure}

Note that such a subtree depends not only on $I$ and $J$ but also on the specific values assigned to $t_i$ for $i \in J$. A ${\mathcal T}_{I}$-form tree is called a finite \emph{$p$-homogeneous tree.} An example of a ${\mathcal T}_{I}$-form tree is shown in Figure~\ref{fig:2}.

 A set $C \subset {\Z}/p^n{\Z}$ is said to be  \emph{$p$-homogeneous} subset of ${\Z}/p^n{\Z}$ with  the \emph{branched level set} $I$ if the corresponding tree ${\mathcal T}_C$ is $p$-homogeneous of form  ${\mathcal T}_{I}$.

If we consider $C \subset \{0, 1, \dots, p^n - 1\}$ as a subset of ${\Z}_p$, then the tree ${\mathcal T}_C$ can be identified with the finite tree determined by the compact open set
\[
\Omega = \bigsqcup_{c \in C} (c + p^n{\Z}_p).
\]

A criterion for a subset $C \subset \Z/p^n\Z$ to be $p$-homogeneous is given in \cite{FFS}.
\begin{theorem}[{\cite[Theorem 2.9]{FFS}}] \label{thm5.4}
Let $n$ be a positive integer, and let $C \subseteq {\Z}/p^n{\Z}$ be a multiset. Suppose that
\begin{enumerate}[{\rm(1)}]
    \item $\#C \le p^k$ for some integer $k$ with $1 \le k \le n$;
    \item there exist $k$ integers $1 \le j_1 < j_2 < \dots < j_k \le n$ such that
        \[
        \sum_{c \in C} e^{2\pi i c p^{-j_t}} = 0 \quad \text{for all } 1 \le t \le k.
        \]
\end{enumerate}
Then $\#C = p^k$ and $C$ is $p$-homogeneous. Moreover, the tree ${\mathcal T}_C$ is a ${\mathcal T}_{I}$-form tree with $I = \{j_1-1, j_2-1, \dots, j_k-1\}$.
\end{theorem}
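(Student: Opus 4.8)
\textbf{Proof proposal for Theorem \ref{thm5.4}.}

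The plan is to prove this by induction on $k$, exploiting the recursive tree structure of $\Z/p^n\Z$ together with Lemma \ref{SchLemma}, which controls the combinatorics of vanishing sums of $p^m$-th roots of unity. The base case $k$ arbitrary but the \emph{largest} exponent $j_k$ is the natural place to start: the hypothesis $\sum_{c\in C} e^{2\pi i c p^{-j_k}}=0$ is a vanishing sum in the $p^{j_k}$-th roots of unity, and grouping the multiset $C$ according to the residue $c \bmod p^{j_k}$ turns this into $\sum_{r=0}^{p^{j_k}-1} b_r\, e^{2\pi i r p^{-j_k}} = 0$ with $b_r = \#\{c\in C : c\equiv r \bmod p^{j_k}\}$. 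By Lemma \ref{SchLemma}, $b_r = b_{r + s p^{j_k-1}}$ for all $s=0,\dots,p-1$ and all $0\le r\le p^{j_k-1}-1$; that is, the counts of $C$ along the $p$ children of any vertex at level $j_k-1$ of the tree are all equal. In particular the level-$(j_k-1)$ vertex set $C_{\bmod p^{j_k-1}}$, as a multiset with multiplicities, satisfies the analogous vanishing relations at the exponents $j_1 < \dots < j_{k-1}$ (since $j_t - 1 < j_k - 1$ for $t<k$, reducing mod $p^{j_k-1}$ does not disturb those sums), and $\#(C_{\bmod p^{j_k-1}}) \le \#C / 1 \le p^k$ — but we want $\le p^{k-1}$ to apply induction.

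The key quantitative point, and the first real step, is to establish the cardinality count $\#C = p^k$ simultaneously with homogeneity, rather than assuming it. I would argue as follows: the equal-branching at level $j_k-1$ forces that every vertex of $C_{\bmod p^{j_k-1}}$ that has \emph{any} descendant in $C_{\bmod p^{j_k}}$ actually has all $p$ of them with equal multiplicity, so $\#C = p \cdot m$ where $m$ counts (with multiplicity) the ``active'' level-$(j_k-1)$ vertices; repeating this reasoning is what the induction will formalize. More carefully: apply the induction hypothesis to the multiset $C' := C_{\bmod p^{j_k-1}}$ with its inherited multiplicities — but first one must check $\#C' \le p^{k-1}$. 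This follows because the branching relation at level $j_k-1$ shows each active level-$(j_k-1)$ vertex carries multiplicity-sum that is a $1/p$ fraction of the total below it; so if $\#C' > p^{k-1}$ we would get $\#C = p\cdot(\text{something} \ge \#C' ) $... this needs care. The cleaner route: the induction hypothesis applied to $C'$ (with the $k-1$ exponents $j_1,\dots,j_{k-1}$) yields $\#C' = p^{k-1}$ and $\mathcal T_{C'}$ is of $\mathcal T_{I'}$-form with $I' = \{j_1-1,\dots,j_{k-1}-1\}$, \emph{provided} we first know $\#C' \le p^{k-1}$; and that bound itself should be extracted by a separate, simpler application of Lemma \ref{SchLemma} bookkeeping, or by noting that Theorem 2.9 of \cite{FFS} is presumably stated so as to not need it as a separate hypothesis. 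Assuming \cite{FFS} is available, I would simply cite the structure at level $j_k-1$ and below from the inductive statement, then glue on the top level.

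The gluing step — the second main step — is then: we know $C'=C_{\bmod p^{j_k-1}}$ is $p$-homogeneous with branched levels $I'$, and we know (from Lemma \ref{SchLemma} at exponent $j_k$) that above each active vertex of $C'$ the tree branches fully into $p$ children at level $j_k - 1$, i.e. $j_k - 1 \in I$. For the levels strictly between $j_{k-1}-1$ and $j_k-1$ (exclusive), and above $j_{k-1}-1$, I must show each active vertex has exactly one descendant, i.e. those levels lie in $J$. This is where I expect the \textbf{main obstacle}: a priori $C$ could branch at a level not among the $j_t - 1$, which would make $\#C$ strictly larger than $p^k$ while still satisfying all $k$ vanishing conditions — so one must rule this out. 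The resolution should come from the cardinality bound $\#C \le p^k$ in hypothesis (1): since the $\mathcal T_I$-form tree with $|I| = k$ branched levels has exactly $p^k$ boundary points, and we have shown $\mathcal T_C$ contains a full $\mathcal T_{\{j_1-1,\dots,j_k-1\}}$-form subtree (from the $k$ vanishing conditions giving $k$ forced full-branchings via the inductive/Schur analysis), the boundary of that subtree already has $p^k$ points; combined with $\#C \le p^k$ this forces $\mathcal T_C$ to equal that subtree exactly, hence no extra branching occurs, $\#C = p^k$, and $I = \{j_1-1,\dots,j_k-1\}$. I would write the induction so that ``$\mathcal T_C$ contains the full $\mathcal T_{\{j_1-1,\dots,j_k-1\}}$-subtree'' is the inductive payload, and then close with the cardinality pigeonhole. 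The routine verifications — that reduction mod $p^{j_k-1}$ preserves the lower vanishing sums, that multiplicities add correctly down the tree, that the $\mathcal T_I$-form tree has $p^{|I|}$ leaves — I would state without grinding through.
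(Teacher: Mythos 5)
The paper does not actually prove Theorem \ref{thm5.4}; it is imported verbatim from \cite{FFS} (Theorem 2.9), so there is no in-paper proof to compare against. Your strategy --- induction on $k$, applying Lemma \ref{SchLemma} to the vanishing sum at the top exponent $j_k$ to force equal multiplicities among the $p$ children of each level-$(j_k-1)$ vertex, then descending to $C'=C_{\bmod p^{j_k-1}}$ --- is the standard argument and is essentially the one in \cite{FFS}, and your outline does close. The one point you leave dangling, the bound $\#C'\le p^{k-1}$ needed to invoke the induction hypothesis, is not actually a difficulty and does not require ``separate bookkeeping'': once Lemma \ref{SchLemma} gives $b_{r'}=b_{r'+sp^{j_k-1}}$ for all $s$, you should \emph{define} the multiplicity of $r'$ in $C'$ to be this common value $b_{r'}$ (not the sum over the $p$ children), so that $\#C=p\,\#C'$ exactly, hence $\#C'\le p^{k-1}$; and since $j_t\le j_k-1$ the factor $e^{2\pi i s p^{j_k-1-j_t}}=1$ shows $\sum_{c\in C}e^{2\pi i cp^{-j_t}}=p\sum_{r'}b_{r'}e^{2\pi i r'p^{-j_t}}$, so $C'$ inherits the $k-1$ lower vanishing conditions. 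The induction then returns $\#C'=p^{k-1}$ with all multiplicities $1$, whence $\#C=p^k$, each of the $p^k$ level-$j_k$ vertices carries exactly one element of $C$ (so no branching occurs at levels $\ge j_k$), and the tree below level $j_k$ is exactly $\mathcal T_{I'}$-form with full branching appended at level $j_k-1$; your alternative closing via the pigeonhole ``a $\mathcal T_I$-subtree with $|I|=k$ already has $p^k$ leaves'' also works but is unnecessary once the count $\#C=p\,\#C'$ is in hand. The only thing to strike is the remark that you would ``simply cite the structure \ldots from \cite{FFS}'' --- that would be circular here, but the rest of your argument makes it dispensable.
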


\begin{theorem}[{\cite[Theorem 4.2]{FFS}}]
Let $n$ be a positive integer, and let $C \subseteq {\Z}/p^n{\Z}$. Then $C$ tiles  ${\Z}/p^n{\Z}$ if and only if $C$ is p-homogeneous.
\end{theorem}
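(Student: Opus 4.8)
The plan is to prove the two implications separately using mask polynomials on the cyclic group $\Z/p^n\Z$. For $D\subseteq\Z/p^n\Z$ write $A_D(x)=\sum_{d\in D}x^d\in\Z[x]$; then $C$ tiles $\Z/p^n\Z$ exactly when there is $B\subseteq\Z/p^n\Z$ with $A_C(x)A_B(x)\equiv 1+x+\cdots+x^{p^n-1}\pmod{x^{p^n}-1}$, and necessarily $\#C=p^k$, $\#B=p^{n-k}$ for some $0\le k\le n$. The factorizations I will lean on are $x^{p^n}-1=\prod_{j=0}^n\Phi_{p^j}(x)$ and $1+x+\cdots+x^{p^n-1}=\prod_{j=1}^n\Phi_{p^j}(x)$, together with $\Phi_{p^j}(1)=p$ for $j\ge 1$. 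I will also repeatedly use the elementary reduction that, since the multiset sum $C+B$ has exactly $\#C\cdot\#B$ elements, one has $C\oplus B=\Z/p^n\Z$ as soon as $C+B=\Z/p^n\Z$ as a set.

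For the forward direction, suppose $C\oplus B=\Z/p^n\Z$ with $\#C=p^k$. If $k=0$ then $C$ is a singleton, hence trivially $p$-homogeneous with branched level set $I=\emptyset$, so assume $k\ge 1$. Evaluating the mask identity at a primitive $p^j$-th root of unity for each $1\le j\le n$ shows that $\Phi_{p^j}$ divides $A_C$ or $A_B$; letting $S_C,S_B\subseteq\{1,\dots,n\}$ be the respective index sets gives $S_C\cup S_B=\{1,\dots,n\}$. Since distinct $\Phi_{p^j}$ are coprime monic polynomials, $\prod_{j\in S_C}\Phi_{p^j}$ divides $A_C$ in $\Z[x]$; evaluating at $1$ forces $p^{\#S_C}\mid p^k$, whence $\#S_C\le k$, and likewise $\#S_B\le n-k$, so $n\le\#S_C+\#S_B\le n$ and $\#S_C=k$. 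Writing $S_C=\{j_1<\cdots<j_k\}$ we obtain $\sum_{c\in C}e^{2\pi i cp^{-j_t}}=A_C(e^{2\pi i/p^{j_t}})=0$ for each $t$, with $\#C=p^k$; Theorem~\ref{thm5.4} then gives that $C$ is $p$-homogeneous with branched level set $I=\{j_1-1,\dots,j_k-1\}$.

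For the reverse direction, let $C$ be $p$-homogeneous with branched level set $I$, set $J=\{0,\dots,n-1\}\setminus I$, and prove by induction on $n$ (the cases $n\le 1$ being immediate) that $C$ is tiled by $B_0:=\{\sum_{j\in J}b_jp^j:b_j\in\{0,\dots,p-1\}\}$. Stripping the lowest $p$-adic digit gives two cases. If $0\in I$, then $C$ meets every residue class mod $p$, and for each $a\in\{0,\dots,p-1\}$ the fibre $C_a:=\{(c-a)/p:c\in C,\ c\equiv a\pmod p\}$ is $p$-homogeneous in $\Z/p^{n-1}\Z$ with branched level set $\{i-1:i\in I\setminus\{0\}\}$; by induction $C_a$ is tiled by the corresponding standard set $B_0'$, and since $B_0=pB_0'$ one checks $C+B_0=\Z/p^n\Z$ as a set (given a target $r+ps$ with $r\in\{0,\dots,p-1\}$, take $a=r$ and solve $c'+b'=s$ in $\Z/p^{n-1}\Z$). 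If $0\notin I$, then $C$ lies in a single residue class $c_0$ mod $p$, $C':=\{(c-c_0)/p:c\in C\}$ is $p$-homogeneous in $\Z/p^{n-1}\Z$ with branched level set $\{i-1:i\in I\}$, and with $B_0=\{0,\dots,p-1\}+pB_0'$ one again checks $C+B_0=\Z/p^n\Z$ as a set via the induction hypothesis. By the counting reduction, $C\oplus B_0=\Z/p^n\Z$ in both cases, completing the induction. (Equivalently, one can run the same recursion directly on mask polynomials, using $A_C(x)=\sum_a x^aA_{C_a}(x^p)$, $A_{B_0}(x)=A_{B_0'}(x^p)$ when $0\in I$, and $A_C(x)=x^{c_0}A_{C'}(x^p)$, $A_{B_0}(x)=\frac{x^p-1}{x-1}A_{B_0'}(x^p)$ when $0\notin I$.)

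The forward direction is essentially bookkeeping once Theorem~\ref{thm5.4} is available — that cited result carries the genuine combinatorial weight — and the cyclotomic counting sketched above is the standard prime-power-order tiling argument. The more delicate part is the reverse direction: one must verify precisely how the branched level set $I$ and the candidate complement $B_0$ transform when the lowest digit is removed, and then confirm that the induction preserves exactness, which is where the identity $\#C\cdot\#B_0=p^n$ enters. A secondary subtlety is that in the case $0\in I$ the fibres $C_a$ may genuinely differ across residues $a$ (they record independent choices at the $J$-levels), so one cannot tile all fibres simultaneously with a single complement; handling the residue classes separately sidesteps this.
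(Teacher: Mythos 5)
The paper itself gives no proof of this statement---it is quoted from \cite{FFS} (Theorem 4.2 there)---so there is no internal argument to compare against. Your proof is correct and follows essentially the same route as the source: the forward direction is the standard cyclotomic-divisibility counting argument (each $\Phi_{p^j}$ must divide one of the two mask polynomials, and evaluation at $1$ forces exactly $k$ of them onto $A_C$) feeding into Theorem~\ref{thm5.4}, and the reverse direction constructs the complementary $p$-homogeneous set supported on the levels $J=\{0,\dots,n-1\}\setminus I$, exactly as in \cite{FFS}.
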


\subsection{The structure of tiles in $\Z/p^nq\Z$ and $\Z/p^n\Z\times\Z/p\Z$}
Write  $\Z/p^nq\Z$  as the  product  form $\mathbb{Z}/p^n\mathbb{Z} \times \mathbb{Z}/q\mathbb{Z}$. The geometrical characterization of tiles in $\mathbb{Z}/p^n\mathbb{Z} \times \mathbb{Z}/q\mathbb{Z}$  was obtained in  \cite{FKL}.

 \begin{theorem}[\cite{FKL}]\label{prop5.7}
Let  $A$ be a tile of  $\mathbb{Z}/p^n\Z \times \Z/ q\Z$.   For $0\leq j \leq q-1$, let $A_j=\{x\in \mathbb{Z}/p^n\Z : (x,j)\in A\}$.
\begin{enumerate}[{\rm(1)}]
    \item If   $\#A=p^t$, then  $A_j$ are disjoint and $\cup_{j=0}^{q-1}A_j$ tiles $\Z/p^n\Z$ by translation.
    \item If $\#A=p^tq$, then  all  $A_j$  can tiles  $\mathbb{Z}/p^n\Z $ with a common branched level set.
    \end{enumerate}
\end{theorem}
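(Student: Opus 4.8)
The plan is to combine Fourier analysis on $G:=\Z/p^n\Z\times\Z/q\Z$ with the $p$-homogeneity theory of \cite{FFS}. Note first that $q$ is necessarily a prime distinct from $p$: this is forced by the dichotomy on $\#A$, and is exactly the condition making $\Z/p^nq\Z\cong\Z/p^n\Z\times\Z/q\Z$. Fix a tiling complement $B$ of $A$ and write $B=\bigsqcup_jB_j\times\{j\}$ as for $A$; then $\widehat{1_A}(\xi,\eta)=\sum_{j}\widehat{1_{A_j}}(\xi)\,e^{2\pi i\eta j/q}$, and the tiling $A\oplus B=G$ is equivalent to $\#A\cdot\#B=p^nq$ together with $\widehat{1_A}(\xi,\eta)\,\widehat{1_B}(\xi,\eta)=0$ for all $(\xi,\eta)\neq 0$. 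Restricting to $\xi=0$ gives an integer relation $\sum_j\#A_j\,e^{2\pi i\eta j/q}=\widehat{1_A}(0,\eta)$ among $q$-th roots of unity, and since $q$ is prime Lemma~\ref{SchLemma} says such a relation vanishes for one (equivalently every) $\eta\neq 0$ precisely when all the $\#A_j$ are equal, i.e. precisely when $q\mid\#A$. Since $q$ divides exactly one of $\#A,\#B$, this already fixes the numerology: in case (1) ($q\nmid\#A$) the factor $\widehat{1_B}(0,\cdot)$ must vanish off $0$, so $\#B_j=p^{n-t}$ for all $j$; in case (2) ($q\nmid\#B$), symmetrically $\#A_j=p^t$ for all $j$.

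I would then intersect the tiling with each coset $\Z/p^n\Z\times\{\ell\}$, obtaining for every $\ell$ a partition $\Z/p^n\Z=\bigsqcup_i(A_i+B_{\ell-i})$ into direct sums; in particular every $A_i\oplus B_k$ is a packing in $\Z/p^n\Z$. I would also isolate one number-theoretic fact: for a tile $C$ of $\Z/p^n\Z$ — equivalently, by \cite{FFS}, a $p$-homogeneous set — and a frequency $\xi$ of order $p^s$ with $\widehat{1_C}(\xi)\neq 0$, the cyclotomic integer $\widehat{1_C}(\xi)$ has norm a power of $p$ over $\Q(e^{2\pi i/p^s})$ (this follows from the ``twisted product'' description of $p$-homogeneous sets in \cite{FFS}), so $q\nmid\widehat{1_C}(\xi)$ in $\Z[e^{2\pi i/p^s}]$.

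With these facts in hand, case (2) reduces to case (1) as follows. Since $\#B=p^{n-t}$ is a power of $p$, $B$ is an instance of case (1); granting that case, the $B_j$ are disjoint and $B^\ast:=\bigcup_jB_j$ tiles $\Z/p^n\Z$, hence is $p$-homogeneous with some branched level set $I$ of size $n-t$. For the $t$ integers $s\in\{1,\dots,n\}$ with $s-1\notin I$ and any $\xi$ of order $p^s$ one has $\widehat{1_{B^\ast}}(\xi)\neq 0$ and $q\nmid\widehat{1_{B^\ast}}(\xi)$. Writing $v_j:=\widehat{1_{B_j}}(\xi)\in\Z[e^{2\pi i/p^s}]$, if $\widehat{1_B}(\xi,\eta_0)=\sum_jv_j\,e^{2\pi i\eta_0 j/q}$ vanished for some $\eta_0\neq 0$, then applying the Galois automorphisms of $\Q(e^{2\pi i/p^sq})$ that fix $e^{2\pi i/p^s}$ would force $\widehat{1_B}(\xi,\eta)=0$ for all $\eta\neq 0$, hence $v_j$ would be independent of $j$ and $q\mid\sum_jv_j=\widehat{1_{B^\ast}}(\xi)$ — a contradiction. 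So $\widehat{1_B}(\xi,\eta)\neq 0$ for every $\eta$, the tiling forces $\widehat{1_A}(\xi,\eta)=0$ for every $\eta$, and Fourier inversion in the $\Z/q\Z$-variable gives $\widehat{1_{A_j}}(\xi)=0$ for every $j$. Thus each $\widehat{1_{A_j}}$ vanishes on the same $t$ spheres of orders $p^s$; since $\#A_j=p^t$, Theorem~\ref{thm5.4} (with $k=t$) shows each $A_j$ is $p$-homogeneous with the common branched level set $\{s-1:s-1\notin I\}$, and in particular each $A_j$ tiles $\Z/p^n\Z$ with this common branched level set, which is case (2).

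The remaining — and main — point is case (1) itself: when $\#A=p^t$, the columns $A_j$ are disjoint and $\bigcup_jA_j$ is a genuine tile of $\Z/p^n\Z$. Here one knows $\#B_j=p^{n-t}$ for all $j$, and pushing the tiling down the projection killing the $\Z/q\Z$-factor gives $a*b=q$ on $\Z/p^n\Z$, where $a=\sum_j1_{A_j}$, $b=\sum_j1_{B_j}$; the content is that $a$ is forced to be $\{0,1\}$-valued with support a tile. I expect no single inequality to settle this; instead one should induct on $n$ with the tree machinery of \cite{FFS,FKL}, analysing the top level of the tree attached to $A$ — where the branching is governed by an elementary weighted tiling inside $\Z/p\Z\times\Z/q\Z\cong\Z/pq\Z$ — and then descending to $\Z/p^{n-1}\Z\times\Z/q\Z$, with case (1) for $A$ and case (2) for $B$ established simultaneously and feeding each other exactly as above. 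This inductive bookkeeping, carried out in \cite{FKL}, is the step I expect to be the real obstacle.
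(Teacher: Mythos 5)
First, a point of orientation: the paper does not actually prove Theorem~\ref{prop5.7} --- it is quoted verbatim from \cite{FKL} with no argument given --- so there is no in-paper proof to measure your proposal against; the only meaningful comparison is between your sketch and a complete proof of the statement.

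Measured against that standard, your proposal has a genuine gap, and you name it yourself: part (1) is nowhere proved. The reduction of part (2) to part (1) is a sound and useful skeleton --- the vanishing sums of $q$-th roots of unity at $\xi=0$ fixing the column sizes via Lemma~\ref{SchLemma}, then the Galois argument showing that $\widehat{1_B}(\xi,\eta)\neq 0$ on the non-branching spheres, so that every $\widehat{1_{A_j}}$ inherits the $t$ common zeros needed to invoke Theorem~\ref{thm5.4} --- although it leans on the assertion that $q\nmid\widehat{1_{B^\ast}}(\xi)$ for a $p$-homogeneous tile $B^\ast$, which you state as a consequence of the ``twisted product'' structure in \cite{FFS} but do not derive (the twisting makes $\widehat{1_{B^\ast}}(\xi)$ a sum, not a product, over branches, so the claim that its norm is a power of $p$ does need an argument). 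The decisive problem, however, is that part (2) is made to rest on part (1) applied to the complement $B$, and part (1) is dispatched with the remark that the inductive tree analysis ``carried out in \cite{FKL}'' is ``the step I expect to be the real obstacle.'' That step is the entire content of the theorem: the weighted convolution identity $a*b=q$ on $\Z/p^n\Z$ that you derive does not by itself force $a=\sum_j 1_{A_j}$ to be $\{0,1\}$-valued with tiling support, and no mechanism for extracting disjointness of the $A_j$ is offered. As written, neither part of the theorem is established; one must either carry out the induction or simply cite \cite{FKL} for the whole statement, as the paper does.
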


Consider the finite abelian group $\mathbb{Z}/p^n\mathbb{Z} \times \mathbb{Z}/p\mathbb{Z}$.  Let $ A$ be a tile of $\mathbb{Z}/p^n\mathbb{Z} \times \mathbb{Z}/p\mathbb{Z}$.
It is evident that $\#A$ divide $p^{n+1}$. Hence, assume that  $\#A=p^i$ for some $1 \leq i\leq n$.
 Define a map $\pi_1$ from the group $ \mathbb{Z}/p^n\mathbb{Z} \times \mathbb{Z}/p\mathbb{Z}$ to the group  $\Z/p^n\mathbb{Z}$ by

\[ \pi_1(a, b)=a, \quad  \hbox{ for } (a, b)\in \mathbb{Z}/p^n\mathbb{Z} \times \mathbb{Z}/p\mathbb{Z}. \]

 Let
 \[\mathcal{Z}_A=\Big\{g\in \mathbb{Z}/p^n\mathbb{Z} \times \mathbb{Z}/p\mathbb{Z}: \widehat {1_ A}(g)=0 \Big\}\] be the set of zeros of the Fourier
transform of the function $1_A$.
The geometrical characterization of tiles in $\mathbb{Z}/p^n\mathbb{Z} \times \mathbb{Z}/p\mathbb{Z}$ was established in \cite{FKL}.

\begin{theorem}[\cite{FKL}]\label{thmfinite}
 Assume  $A$ is  a tile of $\mathbb{Z}/p^n\mathbb{Z} \times \mathbb{Z}/p\mathbb{Z}$ with $\#A=p^t$.
Let
$$
\mathcal{I}_{A}=\big\{i\in \{0,\cdots \ n-1\}: (p^i, 0)\in \mathcal{Z}_ A \big\}.
$$
We distinguish three cases.
\begin{enumerate}[{\rm(1)}]
\item If $\# \mathcal{I}_{A}=t$, then the set $\pi_1(A)$ is a $p$-{\it homogeneous}  in  $\Z/p^n\Z$ with $\#\pi_1( A)=p^t.$
\item If $\# \mathcal{I}_{A}=t-1$ and    $ (p^j, b)\notin \mathcal{Z}_A$  for each  $ j\in  \{0,1,\cdots, n\}\setminus \mathcal{I}_{A}$ and $b\in \{1,\cdots,p-1\}$,  then  the sets
\[ A_i=\{x\in \Z/{p^n}\Z: (x, i) \in A\}\]  are  $p$-homogenous in $\Z/p^n\Z$ with  $\#A_i=p^{t-1}$.
 \item If $\#\mathcal{I}_{A}=t-1$ and   $(p^{j}, b) \in \mathcal{Z}_{A}$ for some  $j\in   \{0,1,\cdots, n\}\setminus \mathcal{I}_{A}$ and $b\in\{1,\cdots,p-1\}$, then
 the set  \[\widetilde{A} = \{ x +b_0 y p^{n-j_0-1}: (x, y) \in   A\} \] is  $p$-homogeneous in $\Z/{p^n}\Z$  with
 $\# \widetilde{A}=p^t$,  where   $j_0$ is the minimal  number in
 $\{0,1,\cdots, n\}\setminus\mathcal{I}_{A}$ such that   $(p^{j_0},b_0)\in \mathcal{Z}_A $ for some  $ b_0\in \{0, \cdots, p-1\}$.
\end{enumerate}
\end{theorem}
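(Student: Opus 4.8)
My plan is to translate the tiling hypothesis into Fourier analysis on the finite group $G=\Z/p^n\Z\times\Z/p\Z$ and then reduce to the one-dimensional structure theory --- Theorem \ref{thm5.4} together with the fact (from \cite{FFS}) that the tiles of $\Z/p^n\Z$ are exactly the $p$-homogeneous sets. First I would record the standard facts: $A$ tiles $G$ iff there is $B\subset G$ with $\#A\cdot\#B=\#G$, so $\#B=p^{n+1-t}$, and $\widehat{1_A}(g)\widehat{1_B}(g)=0$ for all $g\in\widehat G\setminus\{0\}$, i.e. $\mathcal Z_{1_A}\cup\mathcal Z_{1_B}=\widehat G\setminus\{0\}$; and, exactly as in Lemma \ref{lem6} and Corollary \ref{corollary2}, the zero set of the Fourier transform of an integer-valued function on $G$ is invariant under the diagonal action of $(\Z/p^n\Z)^{\times}$, so everything is controlled by the ``essential'' characters $(p^{j},0)$, $(p^{j},b)$ and $(0,b)$ with $0\le j\le n-1$ and $b\neq0$.

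The key reduction is through the projection $\pi_1\colon G\to\Z/p^n\Z$: pushing $A\oplus B=G$ forward gives $1_{\pi_1(A)}*1_{\pi_1(B)}=p$, hence $\widehat{1_{\pi_1(A)}}(\xi)\widehat{1_{\pi_1(B)}}(\xi)=0$ for $\xi\neq0$, so, writing $\mathcal J$ for the frequency set attached to $\pi_1(B)$ the way $\mathcal I_A$ is attached to $\pi_1(A)$, one gets $\mathcal I_A\cup\mathcal J=\{0,\dots,n-1\}$. Applying Theorem \ref{thm5.4} to the $p^{t}$-point multiset $\pi_1(A)$ forces $|\mathcal I_A|\le t$; and if $|\mathcal I_A|\le t-1$ then $|\mathcal J|\ge n+1-t$, so Theorem \ref{thm5.4} applied to the $p^{n+1-t}$-point multiset $\pi_1(B)$ makes it $p$-homogeneous with exactly $n+1-t$ branch levels, forcing $|\mathcal J|=n+1-t$, $|\mathcal I_A|=t-1$ and $\mathcal I_A\sqcup\mathcal J=\{0,\dots,n-1\}$. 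Thus $|\mathcal I_A|\in\{t-1,t\}$ and the three listed cases are exhaustive. If $|\mathcal I_A|=t$, Theorem \ref{thm5.4} applied to $\pi_1(A)$ gives at once that $\pi_1(A)$ is $p$-homogeneous of size $p^{t}=\#A$, so $\pi_1$ is injective on $A$ and the columns $A_i$ are pairwise disjoint: this is Case $(1)$.

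In Cases $(2)$ and $(3)$ one has $|\mathcal I_A|=t-1$, $\pi_1(B)$ is $p$-homogeneous of size $p^{n+1-t}$, and $\pi_1$ is injective on $B$, so $B$ is the graph of some $\phi\colon\pi_1(B)\to\Z/p\Z$. I would then analyse the essential characters with $b\neq0$ through $\widehat{1_A}(\xi,b)=\sum_{\ell=0}^{p-1}e^{2\pi i b\ell/p}\,\widehat{1_{A_\ell}}(\xi)$. In Case $(2)$ --- where no mixed essential zero $(p^{j},b)$ with $j\notin\mathcal I_A$, $b\neq0$ lies in $\mathcal Z_{1_A}$ --- all such characters are zeros of $\widehat{1_B}$, and pushing this back through the graph structure of $B$ over its $p$-homogeneous base should force the columns to have common size $p^{t-1}$ and each $\widehat{1_{A_\ell}}$ to vanish at all $t-1$ frequencies coming from $\mathcal I_A$; then Theorem \ref{thm5.4} makes every $A_\ell$ a $p$-homogeneous set of size $p^{t-1}$ with branch-level set determined by $\mathcal I_A$. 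In Case $(3)$, pick the least level $j_0\notin\mathcal I_A$ carrying a mixed zero $(p^{j_0},b_0)\in\mathcal Z_{1_A}$ and apply the shear $S(x,y)=x+b_0\,y\,p^{\,n-j_0-1}$; since $\widehat{1_{S(A)}}(p^{j_0})=\widehat{1_A}(p^{j_0},b_0)=0$ while $\widehat{1_{S(A)}}$ still vanishes at the frequencies of $\mathcal I_A$, one gets $t$ vanishing Fourier coefficients for a set of $p^{t}$ points, and Theorem \ref{thm5.4} makes $\widetilde A=S(A)$ a $p$-homogeneous set of size $p^{t}$ (equivalently, one checks directly that $\widetilde A$ tiles $\Z/p^n\Z$).

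The hard part will be the treatment of Cases $(2)$ and $(3)$. In Case $(3)$ one must verify that $S$ is injective on $A$ (so that $\#\widetilde A=p^{t}$ genuinely) and that $\widehat{1_{S(A)}}$ really vanishes at all the frequencies associated with $\mathcal I_A$: this is transparent for the levels of $\mathcal I_A$ lying above $j_0$, but for those below $j_0$ the shear introduces extra $\Z/p\Z$-dependent phases that are no longer characters of $\Z/p\Z$, so one has to exploit the minimality of $j_0$ (and the $p$-homogeneity of $\pi_1(B)$) to secure the required columnwise vanishing of the $\widehat{1_{A_\ell}}$ at the frequencies coming from $\mathcal I_A$ --- most cleanly organised as an induction on $n$, collapsing one level at a time. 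In Case $(2)$ the analogous subtlety is the step from ``$\mathcal Z_{1_B}$ contains all the mixed essential characters over $\mathcal J$'' to the stated column structure. Mutual exclusivity of the three cases is then immediate from $|\mathcal I_A|\in\{t-1,t\}$.
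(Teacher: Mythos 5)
The paper contains no proof of this theorem: it is quoted as a black box from the reference \cite{FKL}, so there is no in-paper argument to compare your proposal against. Judged on its own terms, your framework is sound and the parts you actually execute are correct: the reduction to a complement $B$ with $\#B=p^{n+1-t}$ and $\mathcal{Z}_{1_A}\cup\mathcal{Z}_{1_B}=\widehat{G}\setminus\{0\}$, the invariance of zero sets under the diagonal unit action (the analogue of Corollary \ref{corollary2}), the identification of $\mathcal{I}_A$ with the vanishing frequencies of the projected multiset $\pi_1(A)$, and the counting argument via Theorem \ref{thm5.4} that forces $\#\mathcal{I}_A\in\{t-1,t\}$ and settles Case (1) all hold up.

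The gap is that Cases (2) and (3), which carry essentially all the content of the theorem, are announced rather than proved. In Case (2) you say the graph structure of $B$ over its $p$-homogeneous base ``should force'' each column $A_\ell$ to have size $p^{t-1}$ and to satisfy $\widehat{1_{A_\ell}}(p^i)=0$ for every $i\in\mathcal{I}_A$. But the hypothesis of Case (2) constrains only the characters $(p^j,b)$ with $j\notin\mathcal{I}_A$; to deduce columnwise vanishing at a level $i\in\mathcal{I}_A$ by inverting the Fourier transform in the $\Z/p\Z$ variable you need $(p^i,b)\in\mathcal{Z}_{1_A}$ for \emph{all} $b$, and nothing in your outline shows that these characters are not instead absorbed by $\mathcal{Z}_{1_B}$; likewise the equality of the column sizes requires knowing where the characters $(0,b)$ land. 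In Case (3) you correctly compute $\widehat{1_{\widetilde{A}}}(p^{j_0})=\widehat{1_A}(p^{j_0},b_0)=0$ and the vanishing at levels $i\in\mathcal{I}_A$ with $i>j_0$, but you yourself flag that for $i<j_0$ the shear introduces phases $e^{-2\pi i b_0 y/p^{j_0+1-i}}$ that are not characters of $\Z/p\Z$, and that the injectivity of the shear on $A$ (needed for $\#\widetilde{A}=p^t$) is unverified; ``exploit the minimality of $j_0$'' and ``induction on $n$'' are plausible directions, not arguments. As written, the proposal establishes the exhaustiveness of the three cases and Case (1), and is a reasonable plan for the remainder, but it does not yet constitute a proof of the theorem.
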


\section{Structure of tiles in $\Q_p\times \Z/2\Z$.}
Let $\mu$ be the Haar measure $\Q_p$ such that $\mu(\Z_p)=1$.
Equip   $\Q_p\times \Z/2\Z$ with the Haar measure  $\nu$ with $\nu(\Z_p\times\{0\})=1$.

\subsection{Structure of tiles}
\begin{proof}[Proof of Theorem \ref{thm1.3}]
Let $\left ( \Omega  ,T \right ) $ be a tiling pair in $\Q_p\times \Z/2\Z$.
Write  \[\Omega =\left ( \Omega_{0} \times \left \{ 0 \right \}  \right ) \sqcup \left (  \Omega_{1}\times \left \{ 1 \right \}  \right ),\] and  \[T =\left ( T_{0} \times \left \{ 0 \right \}  \right ) \sqcup \left ( T_{1}\times \left \{ 1 \right \}  \right ).\]

	Since $\left ( \Omega  ,T \right ) $ be a tiling pair  in  $\Q_p\times \Z/2\Z$, by definition, we have
	\begin{equation}\label{equ61}
		1_{ \Omega_{0} } \ast \mu_{T_{0} }+1_{ \Omega_{1}}\ast \mu_{T_{1} }=1,   \quad \mu-a.e.
	\end{equation}
	and
	\begin{equation}\label{equ62}
		1_{ \Omega_{1} } \ast \mu_{T_{0} }+1_{ \Omega_{0}}\ast \mu_{T_{1} }=1, \quad  \mu-a.e.
	\end{equation}
	where $\mu_T=\sum_{t\in T} \delta_t $ is a discrete measure in $\Q_p$.
	
	 Next, we will transform the tiling problem on $\mathbb{Q}_p \times \mathbb{Z}/2\mathbb{Z}$  by  a set  into tiling problem on $\mathbb{Q}_p$ by a function. Adding both sides of Equation~\eqref{equ61} and Equation~\eqref{equ62} separately,  we get
	\begin{equation}\label{equ63}
		\left ( 1_{ \Omega_{0} } +1_{ \Omega_{1}} \right ) \ast \left ( \mu_{T_{0} } +\mu_{T_{1} } \right ) =2.
	\end{equation}
	Subtracting Equation \eqref{equ62}  from Equation \eqref{equ61}, we get
\begin{equation}\label{equ64}
	\left ( 1_{ \Omega_{0} } -1_{ \Omega_{1}} \right ) \ast \left ( \mu_{T_{0} } -\mu_{T_{1} } \right ) =0.
\end{equation}
	Hence, by Theorem  \ref{thm-tilingperiodic}, we deduce from \eqref{equ63} that the function  $ f= 1_{ \Omega_{0} } +1_{ \Omega_{1}}$ is uniformly locally constancy. Then, we have $ \Omega_{0} \cap  \Omega_{1} $ and $\left (  \Omega_{0} \setminus   \Omega_{1}  \right ) \cup \left (  \Omega_{1} \setminus   \Omega_{0}  \right ) $ are almost compact open.

Now, we distinguish two cases: $\left ( 1 \right )$ If $\mu_{T_{0}}  -\mu_{T_{1}}=0 $;  $\left ( 2 \right )$ If $\mu_{T_{0}}  -\mu_{T_{1}}\neq 0 $.

$\left ( 1 \right )$ If $\mu_{T_{0}}  -\mu_{T_{1}}=0 $, then we deduce from (\ref{equ63}) that
$$\left ( 1_{ \Omega_{0} } +1_{ \Omega_{1}} \right ) \ast \mu_{T_{0}}=1.$$
Hence $\left (  \Omega_{0}\cup  \Omega_{1} ,T_{0} \right ) $ is a tiling pair in  ${\Q}_{p} $ and  $ \Omega_{0}\cup  \Omega_{1}$ is almost compact open.

$\left ( 2 \right )$ If $\mu_{T_{0}}  -\mu_{T_{1}}\neq 0 $, then by Theorem  \ref{thm-tilingperiodic}, we deduce from \eqref{equ64} that the function  $ f= 1_{ \Omega_{0} } -1_{ \Omega_{1}}$ is uniformly locally constancy, which implies that  $ \Omega_{0} \setminus   \Omega_{1}$ and $ \Omega_{1} \setminus   \Omega_{0}$ are almost compact open. Hence, both $ \Omega_0$ and $ \Omega_1$ are compact open. Without loss of generality, assume that   $ \Omega_0,  \Omega_1$ are compact open set in $\Z_p$. Hence, there exist a positive integer $n$ and $C_0,C_1 \in \Z/p^n \Z$ such that
\[ \Omega_0=\bigsqcup_{c \in C_0} c + p^n \mathbb{Z}_p,  \quad   \Omega_1=\bigsqcup_{c \in C_1} c + p^n \mathbb{Z}_p,\]
up to measure zero sets.
  Note that $ \Omega_0\times\{0\} \cup  \Omega_1\times \{1\}$ tiles $\Q_p\times\Z/2\Z$ by translation  if and only if   $C_0\times\{0\} \cup C_1\times \{1\}$  tiles  $\Z/p^n \Z \times\Z/2\Z$ by translation. 
  
  When  $p\geq 3$, by Theorem \ref{prop5.7},  either $C_0\cap C_1=\emptyset$ and $C_0\cup C_1$ tiles $\Z/p^n\Z$ by translation or  $C_0, C_1$  can tile $\Z/p^n\Z$ by translation with a common translation set.
  
  When $p=2$, by Theorem \ref{thmfinite}, we have three cases: (1) $C_0\cap C_1=\emptyset$ and $C_0\cup C_1$ tiles $\Z/2^n\Z$ by translation, (2) $C_0, C_1$  can tile $\Z/2^n\Z$ by translation with a common translation set, (3) there exist $j_0\in \{0,\cdots, n-1\}$ and such that
$C_0$ and  $\widetilde{C}_1 = \{ x +  2^{n-j_0-1}: x  \in C_1\}$ are disjoint and  $C_0\cup \widetilde{C}_1$ tiles  $\Z/2^n\Z$ by translation.

The combination of the two cases led to our final conclusion.
\end{proof}

\subsection{Tiles are spectral in $\Q_p\times \Z/2\Z$}
It is proved  in \cite{FFS,FFLS}that Fuglede conjecture holds in  $\Q_p$, spectral sets are compact open and are characterized by their  $p$-homogeneity.
\begin{theorem}[\cite{FFS}]\label{thm6}
	Let $\Omega $ be a compact open set in $\Q_{p}$. The following statements are equivalent:
	\begin{enumerate}[{\rm(1)}]
		\item $\Omega $ is a spectral set;
		\item $\Omega $ tiles $\Q_{p}$ by translation;
		\item $\Omega $ is $p$-homogeneous.
	\end{enumerate}
\end{theorem}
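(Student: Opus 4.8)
Since the statement is quoted from \cite{FFS}, the plan is to reduce the entire equivalence to a combinatorial statement about a subset of the finite cyclic group $\Z/p^n\Z$, where the cited results apply directly. First I would normalize: each of the three properties is invariant under the affine maps $x\mapsto ax+b$ with $a\in\Q_p^\times$ and $b\in\Q_p$, so I may assume $\Omega=\bigsqcup_{c\in C}(c+p^n\Z_p)$ with $C\subseteq\{0,1,\dots,p^n-1\}$ identified with $\Z/p^n\Z$. Using \eqref{FB} one computes
\[
\widehat{1_\Omega}(\xi)=p^{-n}\,1_{B(0,p^n)}(\xi)\sum_{c\in C}\chi(-c\xi),
\]
so $\widehat{1_\Omega}$ is supported on $B(0,p^n)$ and, for $\xi\in B(0,p^n)$, its value is governed by the discrete Fourier transform $\widehat{1_C}(j)=\sum_{c\in C}e^{-2\pi i cj/p^n}$ on $\Z/p^n\Z$. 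This dictionary transfers each property: $\Omega$ tiles $\Q_p$ iff $C$ tiles $\Z/p^n\Z$, and $\Omega$ is $p$-homogeneous iff $C$ is (the tree $\mathcal T_C$ is literally the same object). The equivalence (2)$\Leftrightarrow$(3) is then exactly \cite[Theorem 4.2]{FFS}, so the genuine content is the equivalence (1)$\Leftrightarrow$(3).

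For (3)$\Rightarrow$(1) I would construct a spectrum explicitly from the branching data. If $C$ is $p$-homogeneous with branched level set $I=\{i_1<\dots<i_k\}$, then $\#C=p^k$, and a direct computation at the branching digits gives $\sum_{c\in C}\chi(-c\,p^{-r_t})=0$ with $r_t:=i_t+1$; by the rotation invariance of the zero set (Lemma \ref{lem-fourierzero}) the whole sphere $S(0,p^{r_t})$ then lies in $\mathcal Z_{\widehat{1_\Omega}}$. Setting
\[
\Lambda_0=\Big\{\textstyle\sum_{t=1}^k a_t\,p^{-r_t}:a_t\in\{0,1,\dots,p-1\}\Big\},
\]
any distinct $\lambda,\lambda'\in\Lambda_0$ have $|\lambda-\lambda'|_p=p^{r_s}$, where $s$ is the least index at which they differ, so $\lambda-\lambda'$ sits on a zero sphere and $\widehat{1_\Omega}(\lambda-\lambda')=0$; thus $\{\chi_\lambda\}_{\lambda\in\Lambda_0}$ is orthogonal on $\Omega$. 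To obtain a full spectrum I would take $\Lambda=\Lambda_0+\Gamma$, where $\Gamma=\{0\}\cup\{\eta\in\Q_p:v_p(\eta)\le -n-1\}$ is a set of coset representatives for $\Q_p/p^{-n}\Z_p$, i.e.\ the canonical orthonormal basis of $L^2(p^n\Z_p)$. Orthogonality across distinct fibers is automatic because any nonzero $\eta$-difference has absolute value $\ge p^{n+1}$ and so falls outside $\operatorname{supp}\widehat{1_\Omega}=B(0,p^n)$; completeness then follows from Parseval.

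For the converse (1)$\Rightarrow$(3), suppose $\Omega$ is spectral. Reducing modulo the fibers as above yields an orthogonal family $\Lambda_C\subseteq\Z/p^n\Z$ of exponentials on $C$ with $\#\Lambda_C=\#C=p^k$, and orthogonality says that every nonzero difference $\lambda-\lambda'$ is a zero of $\widehat{1_C}$. By rotation invariance the zero set is a union of the level sets $\{\xi:|\xi|_p=p^m\}$. I would then run a counting argument: building an orthogonal family one scale at a time, each admissible zero-level can enlarge the family by a factor of at most $p$, so a family of size $p^k$ forces zeros at at least $k$ distinct levels $j_1,\dots,j_k$, that is $\widehat{1_C}(p^{\,n-j_t})=0$ for $k$ distinct $j_t$. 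These are exactly the hypotheses of Theorem \ref{thm5.4}, which returns that $C$ is $p$-homogeneous, giving (3).

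I expect two steps to carry the weight. The first is making the reduction of spectrality rigorous: since $L^2(\Omega)$ is infinite-dimensional, I must cleanly separate the trivial fiber directions (the characters of $p^n\Z_p$, an infinite but canonical part of any spectrum) from the finite combinatorial part in $\Z/p^n\Z$, and verify that spectra of $\Omega$ and of $C$ correspond with matching cardinalities. The second, and the genuinely hard part, is the counting step in (1)$\Rightarrow$(3): showing that an orthogonal family of size $p^k$ cannot be supported on fewer than $k$ distinct zero-levels. This needs an inductive pigeonhole argument along the filtration of $\Z/p^n\Z$ by the subgroups $p^{m}\Z/p^n\Z$, and it is precisely here that the prime-power structure (through Lemma \ref{SchLemma} and Theorem \ref{thm5.4}) is indispensable, the analogous bound failing for general moduli, which is exactly why Fuglede can fail in other finite groups.
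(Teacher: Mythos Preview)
The paper does not give a proof of this theorem: it is quoted from \cite{FFS} and used as a black box in \S6.2. So there is no ``paper's own proof'' to compare against; your proposal is rather a sketch of how the cited source argues.

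Your overall architecture is the right one, and indeed it is the one in \cite{FFS}: normalise so that $\Omega=\bigsqcup_{c\in C}(c+p^n\Z_p)$, transfer (2)$\Leftrightarrow$(3) to $\Z/p^n\Z$ via \cite[Theorem~4.2]{FFS}, build an explicit spectrum from the branching levels for (3)$\Rightarrow$(1), and for (1)$\Rightarrow$(3) count zero--spheres to feed into Theorem~\ref{thm5.4}. Two points deserve correction. First, your set $\Gamma=\{0\}\cup\{\eta:v_p(\eta)\le -n-1\}$ is not a system of coset representatives for $\Q_p/p^{-n}\Z_p$: for instance $p^{-n-1}$ and $p^{-n-1}+1$ both lie in $\Gamma$, their difference has absolute value $1\le p^n$, and $\widehat{1_\Omega}$ need not vanish there, so the orthogonality between fibres fails. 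You want a genuine transversal, e.g.\ the set $\mathbb{L}_n$ that the paper itself uses in \S6.2. Second, the step ``reducing modulo the fibers yields an orthogonal family $\Lambda_C\subseteq\Z/p^n\Z$ with $\#\Lambda_C=\#C$'' is the substantive part of (1)$\Rightarrow$(3) and your sketch does not yet do it: from an arbitrary spectrum $\Lambda$ of $\Omega$ one must first show that $\Lambda$ meets a single coset of $p^{-n}\Z_p$ in exactly $\#C$ points (this uses the density/Parseval count), and only then does the pigeonhole over zero--levels combined with Lemma~\ref{SchLemma} force $k$ distinct levels $j_1,\dots,j_k$ and hence Theorem~\ref{thm5.4} applies. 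You have correctly identified this as the hard step, but as written the reduction is asserted rather than carried out.
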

Let $E$ be a subset of $\Q_{p}$. Define
$$I_{E} =\left \{ i\in \Z:\:\exists \: x,y\in E \:  such \:that \: v_{p}\left ( x-y \right )=i   \right \} $$
as the set of admissible $p$-orders corresponding to the set $E$.
The structure of spectra for spectral sets is also characterized.
\begin{theorem}[\cite{FFS} Theorem 5.1]\label{thm5}
	Let $\Omega \subset \Q_{p} $ be a $p$-homogeneous compact open set with the admissible $p$-order set $I_{\Omega } $.
	\begin{enumerate}[{\rm(1)}]
		\item The set $\Lambda $ is a spectrum of $\Omega$ if and only if it is $p$-homogeneous discrete set with admissible $p$-order set
		$I_{\Lambda }=-\left ( I_{\Omega }+1 \right ) $.
		\item The set $T$ is a tiling complement of $\Omega$ if and only if it is a $p$-homogeneous discrete set with admissible $p$-order set $I_{T} =\Z \setminus I_{\Omega }$.
	\end{enumerate}
\end{theorem}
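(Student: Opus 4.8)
The plan is to invoke the structural trichotomy of Theorem~\ref{thm1.3} and to produce, in each case, an explicit orthonormal basis of $L^2(\Omega)$ made of characters of $\Q_p\times\Z/2\Z$. Since $\widehat{\Q_p\times\Z/2\Z}\cong\Q_p\times\Z/2\Z$, a character is indexed by a pair $(\xi,b)$ and acts on $(x,j)$ by $\chi_\xi(x)(-1)^{bj}$. Setting $I_i(\eta):=\int_{\Omega_i}\chi_\eta(x)\,d\mu(x)$ for $i\in\{0,1\}$, the inner product over $\Omega$ of the characters indexed by $(\xi,b)$ and $(\xi',b')$ equals
\begin{equation}\label{eq:ipOmega}
I_0(\xi-\xi')+(-1)^{b-b'}\,I_1(\xi-\xi').
\end{equation}
Thus the task reduces to lifting a spectrum of a suitable compact open subset of $\Q_p$, furnished by Theorems~\ref{thm6} and~\ref{thm5}, to a set of pairs by a judicious choice of second coordinates $b_\lambda\in\{0,1\}$.

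First I would handle the disjoint cases (the first alternative of Theorem~\ref{thm1.3}(1) and case~(i) of Theorem~\ref{thm1.3}(2)). Here $\Omega'=\Omega_0\sqcup\Omega_1$ tiles $\Q_p$, hence by Theorem~\ref{thm6} it is spectral with some spectrum $\Lambda'\subset\Q_p$. I claim $\Lambda:=\Lambda'\times\{0\}$ is a spectrum of $\Omega$. Indeed $\Omega=(\Omega_0\times\{0\})\sqcup(\Omega_1\times\{1\})$ is canonically isometric to $\Omega'$ as a measure space, and under this isometry the character indexed by $(\lambda,0)$ is carried exactly to $\chi_\lambda|_{\Omega'}$; since $\{\chi_\lambda\}_{\lambda\in\Lambda'}$ is an orthonormal basis of $L^2(\Omega')$, so is the family indexed by $\Lambda$ for $L^2(\Omega)$. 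Notably this argument needs nothing about $\Omega_0,\Omega_1$ being individually compact open, which matters because in this branch only their union is known to be compact open.

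Next the common-complement cases (the second alternative of Theorem~\ref{thm1.3}(1) and case~(ii) of Theorem~\ref{thm1.3}(2)). Here $\Omega_0$ and $\Omega_1$ are $p$-homogeneous compact open sets with the same admissible $p$-order set, so Theorem~\ref{thm5} provides a single set $\Lambda_0\subset\Q_p$ that is simultaneously a spectrum of $\Omega_0$ and of $\Omega_1$, and moreover $\mu(\Omega_0)=\mu(\Omega_1)$. I would take $\Lambda:=\Lambda_0\times\{0,1\}$. Orthogonality follows from \eqref{eq:ipOmega}: for $\lambda\ne\lambda'$ both $I_0(\lambda-\lambda')$ and $I_1(\lambda-\lambda')$ vanish, while for $\lambda=\lambda'$ and $b\ne b'$ the value is $\mu(\Omega_0)-\mu(\Omega_1)=0$. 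Completeness holds because, for each fixed $\lambda$, the two characters $(\lambda,0)$ and $(\lambda,1)$ span inside $L^2(\Omega_0)\oplus L^2(\Omega_1)\cong L^2(\Omega)$ the same plane as $(\chi_\lambda|_{\Omega_0},0)$ and $(0,\chi_\lambda|_{\Omega_1})$, and $\Lambda_0$ is a basis for each factor.

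The remaining case~(iii) for $p=2$ is where I expect the real difficulty. Put $\tau=2^{n-j_0-1}$; then $\Omega'=\Omega_0\sqcup\widetilde\Omega_1$ tiles $\Q_2$ and carries a spectrum $\Lambda'$, while $\Omega_1=\widetilde\Omega_1-\tau$ gives $I_1(\eta)=\overline{\chi(\eta\tau)}\,\widetilde I_1(\eta)$ with $\widetilde I_1(\eta)=\int_{\widetilde\Omega_1}\chi_\eta(x)\,d\mu(x)$, and $I_0(\lambda-\lambda')=-\widetilde I_1(\lambda-\lambda')$ for distinct $\lambda,\lambda'\in\Lambda'$. Substituting into \eqref{eq:ipOmega}, the family $\Lambda=\{(\lambda,b_\lambda):\lambda\in\Lambda'\}$ is orthogonal exactly when $(-1)^{b_\lambda-b_{\lambda'}}=\chi((\lambda-\lambda')\tau)$ for every pair with $\widetilde I_1(\lambda-\lambda')\ne0$. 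The obstruction is that $\chi(\lambda\tau)$ is only a $2^{j_0+1}$-th root of unity in general, not necessarily $\pm1$, so the naive assignment $(-1)^{b_\lambda}=\chi(\lambda\tau)$ is unavailable. I would resolve this either by using the minimality of $j_0$ in Theorem~\ref{thmfinite} together with the $p$-homogeneity of $\Lambda'$ (Theorem~\ref{thm5}) to show that the twist $\chi((\lambda-\lambda')\tau)$ already takes values in $\{\pm1\}$ on the support of $\widetilde I_1$, or, since here $\Omega_0$ and $\Omega_1$ are genuinely compact open, by transporting the problem through the scaling correspondence to the finite group $\Z/2^n\Z\times\Z/2\Z$ and invoking the finite-group spectrality that underlies Theorem~\ref{thmfinite}. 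Combining the cases yields a spectrum in each, so every tile of $\Q_p\times\Z/2\Z$ is spectral.
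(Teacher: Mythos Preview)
Your proposal does not address the stated theorem. Theorem~\ref{thm5} is a result quoted from \cite{FFS} characterizing spectra and tiling complements of a $p$-homogeneous compact open set in $\Q_p$; the present paper does not prove it, it merely cites it as input. What you have written is a proof sketch of the \emph{Corollary} (tiles in $\Q_p\times\Z/2\Z$ are spectral), which is a different statement altogether. Nothing in your write-up establishes, or even attempts to establish, the biconditionals in parts~(1) and~(2) of Theorem~\ref{thm5}.

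If one reads your proposal instead as an attempt at the Corollary and compares it to the argument in \S6.2, the treatment of the disjoint case and the common-complement case is essentially identical to the paper's: $\Lambda'\times\{0\}$ and $\Lambda_0\times\{0,1\}$ respectively. For case~(iii) with $p=2$, however, your argument remains only a plan. You correctly isolate the orthogonality constraint $(-1)^{b_\lambda-b_{\lambda'}}=\chi((\lambda-\lambda')\tau)$ and note the obstruction that $\chi(\lambda\tau)$ need not lie in $\{\pm1\}$, but you do not carry either of your two suggested resolutions to a conclusion. The paper takes the second route you mention: it passes to the finite quotient $\Z/2^n\Z\times\Z/2\Z$ and writes down an explicit spectrum
\[
\Lambda=\Big\{\big(\textstyle\sum_{i\in\mathcal{I}_C}s_i 2^{i-n},\,0\big)+s_{j_0}(2^{j_0-n},1):s_i,s_{j_0}\in\{0,1\}\Big\}+(\mathbb{L}_n\times\{0\}),
\]
using the zero-set data $\mathcal{I}_C$ and the distinguished index $j_0$ coming from Theorem~\ref{thmfinite}. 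Your first suggested route (forcing $\chi((\lambda-\lambda')\tau)\in\{\pm1\}$ via minimality of $j_0$) is not pursued in the paper, and you have not shown it actually goes through; in particular you would need to verify that whenever $\widetilde I_1(\lambda-\lambda')\ne0$ the valuation $v_2(\lambda-\lambda')$ forces $|(\lambda-\lambda')\tau|_2\le 2$, which requires a more careful use of the admissible $p$-order set of $\Lambda'$ than you have given.
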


Let $\Omega\subset \Q_p\times \Z/2\Z$ be a tile.   Write  $\Omega$ as  \[\Omega =\left ( \Omega_{0} \times \left \{ 0 \right \}  \right ) \sqcup \left (  \Omega_{1}\times \left \{ 1 \right \}  \right ).\] 

When $p>2$, by Theorem \ref{thm1.3},   there are two mutually exclusive cases:
{\rm (1)} $\mu(\Omega_0\cap\Omega_1)=0$ and  $ \Omega_0\cup \Omega_1$ can tile $\Q_p$ by translation,
{\rm (2)} $\Omega_0$ and $\Omega_1$  can tiles $\Q_p$ with  a common translation set  $T_0\subset \Q_p$.
For the first case, $\Omega_0\cup \Omega_1 $ tiles $\Q_p$ by translation. Assume that $(\Omega_0\cup \Omega_1,\Lambda)$ is a spectral pair.  Therefore, $(\Omega, \Lambda \times \left \{ 0 \right \})$ is a spectral pair in $\Q_p\times \Z/2\Z$.
For the second case, the set $\Omega_0$ tiles $\Q_p$ by translation. It follows that $\Omega_0$  and $\Omega_1$  are spectral sets in $\Q_p$ with a common spectrum $\Lambda$.  Therefore, $(\Omega,\left ( \Lambda \times \left \{ 0 \right \}  \right ) \sqcup \left (  \Lambda \times \left \{ 1 \right \}  \right ))$ is also a spectral pair.

When $p=2$, by Theorem \ref{thm1.3},   there are three cases. For the first two cases, it is similar as $p>2$. For the third case, by Theorem \ref{thm1.3}, $\Omega_0$ and $\Omega_1$ are compact open up to  measure zero sets. Without loss of generality, assume that   $ \Omega_0,  \Omega_1$ are compact open set in $\Z_2$. Hence, there exist a positive integer $n$ and $C_0,C_1 \in \Z/2^n \Z$ such that
\[ \Omega_0=\bigsqcup_{c \in C_0} c + 2^n \mathbb{Z}_2,  \quad   \Omega_1=\bigsqcup_{c \in C_1} c + 2^n \mathbb{Z}_2.\]
Hence, $C=C_{0}\times\{0\} \cup C_{0}\times\{1\}$ tiles $\Z/2^n \Z \times \Z/2\Z$ by translation. Let
$$
\mathcal{I}_{C}=\big\{i\in \{0,\cdots \ n-1\}: (2^i, 0)\in \mathcal{Z}_ C \big\},
$$
and let $j_0\in \{0,1,\cdots, n-1\}\setminus  \mathcal{I}_{C}$ such that   $(2^{j_0},1)$ in $\mathcal{Z}_C$ and $(2^i, 1)\notin \mathcal{Z}_C$ for $i\in  \mathcal{I}_{C}$ with $i<j_0$.
 
Define 
$$
\Lambda=\Big\{(\sum \limits_{i\in \mathcal{I}_{C}}s_ip^{i-n}, 0)+s_{j_0}(p^{j_0-n},1): s_i, s_j \in \{0, 1, \cdots, p-1\} \Big\}+(\mathbb{L}_{n}\times\{0\}), 
$$
where \[\mathbb{L}_n=\{\frac{k}{p^{l}}: l> n, 1\leq k\leq p^{l} \hbox{ and } \gcd(k,p)=1\}.\]
Hence, $(\Omega, \Lambda)$ forms a spectral pair of $\Q_2\times \Z/2\Z$. 



\end{document}